\documentclass[10pt,reqno,draft]{amsart}
\usepackage[latin9]{inputenc}
\usepackage{color}
\usepackage{textcomp}
\usepackage{amstext}
\usepackage{amsthm}
\usepackage{amssymb}
\usepackage{esint}

\makeatletter
\numberwithin{equation}{section}
\numberwithin{figure}{section}
\theoremstyle{plain}
\newtheorem{thm}{\protect\theoremname}[section]
\theoremstyle{plain}
\newtheorem{assumption}[thm]{\protect\assumptionname}
\theoremstyle{remark}
\newtheorem{rem}[thm]{\protect\remarkname}
\theoremstyle{plain}
\newtheorem{lem}[thm]{\protect\lemmaname}
\theoremstyle{plain}
\newtheorem{prop}[thm]{\protect\propositionname}


\usepackage{color}
\usepackage{latexsym}
\usepackage{bm}
\usepackage{graphicx}
\usepackage{wrapfig}
\usepackage{fancybox}

     \def\section{\@startsection{section}{1}%
     \z@{.7\linespacing\@plus\linespacing}{.5\linespacing}%
     {\bfseries
     \centering
     }}
     \def\@secnumfont{\bfseries}

\hfuzz1pc 

\setlength{\textheight}{19.5 cm}
\setlength{\textwidth}{12.5 cm}

\newcommand{\Rd}{\mathbb{R}^{d}}

\newcommand{\BN}{\mathbb{N}}

\newcommand{\BR}{\mathbb{R}}

\newcommand{\bfE}{\mathbf{E}}


  \providecommand{\lemmaname}{Lemma}
  \providecommand{\propositionname}{Proposition}
  \providecommand{\remarkname}{Remark}
\providecommand{\theoremname}{Theorem}
\providecommand{\assumptionname}{Assumption}

  \providecommand{\lemmaname}{Lemma}
  \providecommand{\propositionname}{Proposition}
  \providecommand{\remarkname}{Remark}
\providecommand{\theoremname}{Theorem}

  \providecommand{\lemmaname}{Lemma}
  \providecommand{\propositionname}{Proposition}
  \providecommand{\remarkname}{Remark}
\providecommand{\theoremname}{Theorem}

  \providecommand{\lemmaname}{Lemma}
  \providecommand{\propositionname}{Proposition}
  \providecommand{\remarkname}{Remark}
\providecommand{\theoremname}{Theorem}

\makeatother

\providecommand{\assumptionname}{Assumption}
\providecommand{\lemmaname}{Lemma}
\providecommand{\propositionname}{Proposition}
\providecommand{\remarkname}{Remark}
\providecommand{\theoremname}{Theorem}

\begin{document}
\title[Uniqueness in law for stable-like processes of variable order]{Uniqueness in law for stable-like processes of variable order}
\author[P. Jin]{Peng Jin}

\address{Peng Jin: Department of Mathematics, Shantou University, Shantou, Guangdong 515063, China}

\email{pjin@stu.edu.cn}

\subjclass[2010]{Primary 60J75; Secondary 60G52}

\keywords{Stable-like process, martingale problem, transition density function, resolvent, integro-differential operator}

\begin{abstract} Let $d\ge1$. Consider a stable-like operator of
variable order
\begin{align*}
\mathcal{A}f(x) & =\int_{\mathbb{R}^{d}\backslash\{0\}}\left[f(x+h)-f(x)-\mathbf{1}_{\{|h|\le1\}}h\cdot\nabla f(x)\right]n(x,h)|h|^{-d-\alpha(x)}\mathrm{d}h,
\end{align*}
where $0<\inf_{x}\alpha(x)\le\sup_{x}\alpha(x)<2$ and $n(x,h)$ satisfies
\[
n(x,h)=n(x,-h),\quad0<\kappa_{1}\le n(x,h)\le\kappa_{2},\quad\forall x,h\in\mathbb{R}^{d},
\]
with $\kappa_{1}$ and $\kappa_{2}$ being some positive constants.
Under some further mild conditions on the functions $n(x,h)$ and
$\alpha(x)$, we show the uniqueness of solutions to the martingale
problem for $\mathcal{A}$. \end{abstract}

\maketitle

\section{Introduction}

Consider the non-local operator 
\begin{equation}
\mathcal{A}f(x)=\int_{\mathbb{R}^{d}\backslash\{0\}}\left[f(x+h)-f(x)-\mathbf{1}_{\{|h|\le1\}}h\cdot\nabla f(x)\right]\frac{n(x,h)}{|h|^{d+\alpha(x)}}\mathrm{d}h,\label{eq: defi A}
\end{equation}
where $n(x,h)$ is bounded above and below by positive constants and
$0<\inf_{x}\alpha(x)\le\sup_{x}\alpha(x)<2$. Due to the fact that
the jump kernel $n(x,h)/|h|^{d+\alpha(x)}$ is comparable to that
of an isotropic stable process of order $\alpha(x)$, with $\alpha(x)$
depending on $x$, the operator $\mathcal{A}$ is called a stable-like
operator of variable order. Operators of the form (\ref{eq: defi A})
were already investigated, for instance, in  \cite{MR2095633,MR2180302,MR2244602,MR958291,tang2007uniqueness}.
However, many problems related to $\mathcal{A}$ have not been fully
understood. The variable order nature of $\mathcal{A}$, in contrast
to constant order stable-like operators, brings us many difficulties. 

In \cite{MR2095633,MR2180302} Bass and Kassmann proved the Harnack
inequalities and regularity of harmonic functions with respect to
$\mathcal{A}$. There, as one part of the standing assumption, the
existence of a strong Markov process associated with $\mathcal{A}$
was assumed. In fact, their results were proved via probabilistic
method where the strong Markov property played an important role.
Later, Silvestre \cite{MR2244602} obtained Hölder regularity of harmonic
functions with respect to more general non-local operators, and his
approach was purely analytical. 

The existence of a strong Markov process associated with $\mathcal{A}$
is closely related to the corresponding martingale problem (see below
for the definition). In the case where $\alpha(x)\equiv\alpha$ is
constant, the well-posedness of the martingale problem for $\mathcal{A}$
(possibly with lower order perturbations) was proved in \cite{MR1246036,MR2583323,MR2508568,MR3145767,MR3201992,chen2016uniqueness}
under various assumptions; in particular, Mikulevi\v{c}ius and Pragarauskas
\cite{MR3145767} obtained the well-posedness by requiring the Hölder
continuity of $x\mapsto n(x,h)$. Recently, by establishing some estimate
of Krylov\textquoteright s type, Chen and Zhang \cite{chen2016uniqueness}
extended the result of \cite{MR3145767} to much more general (constant
order) stable-like operators with possibly singular jump measures
which are comparable to those of nondegenerate $\alpha$-stable processes. 

The martingale problem for $\mathcal{A}$ becomes more delicate when
$\alpha(x)$ is allowed to change with $x$. For sufficiently smooth
functions $n(x,h)$ and $\alpha(x)$, the operator $\mathcal{A}$
and its martingale problem can be studied using the classical theory
of pseudo-differential operators, see \cite{MR1243995,MR1378858,MR1809340}.
However, with coefficients that are not smooth, this approach fails
to work. In the general case, the solvability of the martingale problem
for $\mathcal{A}$ is actually not difficult to obtain by the weak
convergence argument, and the reader is referred to \cite{MR0433614,MR958291,tang2007uniqueness}
for some sufficient conditions for existence. In contrast, the uniqueness
problem is more difficult. For one spatial dimension a condition for
uniqueness was given by Bass \cite{MR958291}, where some perturbation
method was used. With a similar idea, Tang \cite{tang2007uniqueness}
considered the more general multidimensional case and provided also
a sufficient condition for uniqueness; however, the condition \cite[Assumption 2.2(a)]{tang2007uniqueness}
there (see also Remark \ref{rem:(i)-According-to} below), which is
necessary to make the approach to work, seems a bit restrictive to
rule out some interesting cases. 

We would like to mention that if one considers solutions of stochastic
differential equations driven by stable processes, it is also possible
to obtain Markov processes that are of variable order nature. For
example, consider the following system of SDEs 
\begin{equation}
\begin{cases}
dX_{t}^{i}=\sum_{j=1}^{d}A_{ij}(X_{t-})dZ_{t}^{j}, & i\in\{1,\ldots,d\},\\
\ X_{0}=x_{0}\in\mathbb{R}^{d},
\end{cases}\label{eq: system of sdes}
\end{equation}
where $A=(A_{ij}):\mathbb{R}^{d}\to\mathbb{R}^{d\times d}$ is measureable
and $Z_{t}^{1},\ldots,Z_{t}^{d}$ are independent one-dimensional
symmetric stable processes with stability indices $\alpha_{1},\ldots,\alpha_{d}\in(0,2)$.
In \cite{MR2222382}, Bass and Chen showed that if $\alpha_{1}=\ldots=\alpha_{d}$
and the matrix $A(x)$ is continuous in $x$ and non-degenerate, then
the system \eqref{eq: system of sdes} has a unique weak solution.
Recently, Chaker \cite{Chaker19} studied the variable order case
and showed that if $A(x)$ is diagonal, non-degenerate and bounded
continuous, then weak uniqueness for \eqref{eq: system of sdes} also
holds. However, weak uniqueness for the general variable order case
of \eqref{eq: system of sdes} remains unsolved. 

The aim of this paper is to study the uniqueness for the martingale
problem associated with the operator $\mathcal{A}$ defined in \eqref{eq: defi A},
without assuming too strong regularity conditions on its coefficients.
Our standing assumption on the functions $n(x,h)$ and $\alpha(x)$
reads as follows. 
\begin{assumption}
\label{assu: main}Suppose

\emph{(a) for }$x,h\in\Rd$, $n(x,h)=n(x,-h)$ and $0<\kappa_{1}\le n(x,h)\le\kappa_{2}<\infty$,
where $\kappa_{1},\kappa_{2}$ are constants;

\emph{(b)} $\int_{0}^{1}r^{-1}\psi(r)\mathrm{d}r<\infty$, where $\psi(r):=\sup_{h\in\Rd,|x-y|\le r}|n(x,h)-n(y,h)|$; 

\emph{(c) for $x\in\Rd$,} $0<\underline{\alpha}\le\alpha(x)\le\overline{\alpha}<2$,
where $\underline{\alpha},\overline{\alpha}$ are constants;

\emph{(d) $\beta(r)=o(|\ln r|^{-1})$ as $r\to0$ and} $\int_{0}^{1}r^{-1}|\ln r|\beta(r)\mathrm{d}r<\infty$,
where $\beta(r):=\sup_{|x-y|\le r}|\alpha(x)-\alpha(y)|$. 
\end{assumption}

\begin{rem}
\label{rem:(i)-According-to} According to Assumption \ref{assu: main}(b),
$n(x,h)$ is Dini continuous in $x$. Note that the condition in \cite[Assumption 2.2(a)]{tang2007uniqueness}
is very different from ours and requires the existence of a Dini continuous
function $\xi(x)$ such that $|n(x,h)-\xi(x)|\leq c_{1}(1\wedge|h|^{\epsilon})$
for all $x,h\in\Rd$, where $c_{1,}\epsilon>0$ are some constants.
In fact, the essential idea of \cite{tang2007uniqueness} is to view
the jump kernel $n(x,h)|h|^{-d-\alpha(x)}$ as a perturbation of the
kernel $\xi(x)|h|^{-d-\alpha(x)}$.
\end{rem}

Under Assumption \ref{assu: main}, the existence for the martingale
problem associated with $\mathcal{A}$ is guaranteed, due to \cite[Theorem 2.2]{MR0433614}.
Our main result for uniqueness is the following. 

For the sake of completeness we first recall the definition of the
martingale problem for $\mathcal{A}$. Let $D=D\big([0,\infty);\Rd\big)$,
the set of paths in $\mathbb{R}^{d}$ that are right continuous with
left limits, be endowed with the Skorokhod topology. Set $X_{t}(\omega)=\omega(t)$
for $\omega\in D$ and let $\mathcal{D}=\sigma(X_{t}:0\le t<\infty)$
and $\mathcal{F}_{t}:=\sigma(X_{r}:0\le r\le t)$. A probability measure
$\mathbf{P}$ on $(D,\mathcal{D})$ is called a solution to the \emph{martingale
problem} for $\mathcal{A}$ starting from $x\in\Rd$, if $\mathbf{P}(X_{0}=x)=1$
and under the measure $\mathbf{P}$, 
\[
f(X_{t})-\int_{0}^{t}\mathcal{A}f(X_{u})\mathrm{d}u,\ t\ge0,
\]
 is an $\left(\mathcal{F}_{t}\right)$-martingale for all $f\in C_{b}^{2}(\mathbb{R}^{d})$. 
\begin{thm}
\label{thm: main}Let $\mathcal{A}$ be as in \emph{(\ref{eq: defi A})},
and suppose Assumption \emph{\ref{assu: main}} holds. Then for each
$x\in\Rd$, the martingale problem for the operator $\mathcal{A}$
starting from $x$ has at most one solution.
\end{thm}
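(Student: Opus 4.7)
I would follow the classical Stroock--Varadhan strategy: it suffices to show that for each $\lambda>0$ large enough and each $f$ in a dense subset of $C_{0}(\Rd)$, there exists a bounded function $u$ solving $\lambda u-\mathcal{A}u=f$ in a class to which the martingale property applies. Given such $u$, optional stopping applied to the martingale $u(X_{t})-u(X_{0})-\int_{0}^{t}\mathcal{A}u(X_{s})\,\mathrm{d}s$ yields $\bfE^{x}\int_{0}^{\infty}e^{-\lambda t}f(X_{t})\,\mathrm{d}t=u(x)$ for \emph{any} solution $\bfP^{x}$ of the martingale problem starting from $x$. Equality of these one-dimensional resolvents for $f$ in a dense class pins down the one-dimensional marginals, and a standard induction using regular conditional distributions extends this to all finite-dimensional distributions, giving the desired uniqueness.

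The construction of $u$ would proceed by freezing coefficients. For each $y\in\Rd$, introduce the constant-coefficient operator
\begin{equation*}
\mathcal{A}^{y}g(x)=\int_{\Rd\setminus\{0\}}\bigl[g(x+h)-g(x)-\mathbf{1}_{\{|h|\le1\}}h\cdot\nabla g(x)\bigr]\frac{n(y,h)}{|h|^{d+\alpha(y)}}\,\mathrm{d}h,
\end{equation*}
which generates a L\'evy process comparable to an isotropic $\alpha(y)$-stable one. Its resolvent $R_{\lambda}^{y}$ is smoothing, and the bounds $\alpha(y)\in[\underline{\alpha},\overline{\alpha}]$ ought to give H\"older and Dini estimates on $R_{\lambda}^{y}f$ that are uniform in $y$. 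The plan is then to form the parametrix $u(x)=R_{\lambda}^{x}f(x)$ (or, if necessary, a partition-of-unity patching of such frozen resolvents), producing an identity of the form
\begin{equation*}
(\lambda-\mathcal{A})u(x)=f(x)-(\mathcal{A}-\mathcal{A}^{x})R_{\lambda}^{x}f(x)+\text{lower-order correction},
\end{equation*}
and to close the equation by a Neumann series or fixed-point iteration in a suitable space of Dini-continuous functions.

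The crux is the pointwise estimate of $(\mathcal{A}^{y}-\mathcal{A})v(x)$ applied to $v=R_{\lambda}^{y}f$. After splitting $h$ into $\{|h|\le 1\}$ and $\{|h|>1\}$, one must control the two difference kernels
\begin{equation*}
\frac{n(x,h)-n(y,h)}{|h|^{d+\alpha(x)}}\qquad\text{and}\qquad n(y,h)\!\left(\frac{1}{|h|^{d+\alpha(x)}}-\frac{1}{|h|^{d+\alpha(y)}}\right).
\end{equation*}
The first factor is tamed by the Dini modulus $\psi$ of Assumption~\ref{assu: main}(b). The second expands as $|h|^{\alpha(y)-\alpha(x)}-1$, producing a logarithmic factor of order $\beta(|x-y|)|\ln|h||$; integrating this against the regularity of $v$ yields an error of order $\beta(|x-y|)|\ln|x-y||$. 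Assumption~\ref{assu: main}(d), which demands precisely $\int_{0}^{1}r^{-1}|\ln r|\beta(r)\,\mathrm{d}r<\infty$ and $\beta(r)=o(|\ln r|^{-1})$, is tailored so that this error is both integrable at $r=0$ and arbitrarily small for small $|x-y|$, which is exactly what is needed for the fixed-point iteration to converge.

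The main obstacle, and what distinguishes the variable-order case from the classical stable-like one, is that the two difference kernels above cannot be reduced to a common scale $|h|^{-d-\alpha_{0}}$: freezing unavoidably mixes the orders $\alpha(x)$ and $\alpha(y)$, and the resulting logarithmic loss must be absorbed by a slightly stronger modulus of continuity of $\alpha(\cdot)$ than of $n(\cdot,h)$. Making the freezing-plus-perturbation scheme work \emph{uniformly} in the base point $y$, while keeping the regularity estimates for $R_{\lambda}^{y}$ uniform in both $\lambda$ and $\alpha(y)\in[\underline{\alpha},\overline{\alpha}]$, is the delicate analytic step on which the whole argument rests.
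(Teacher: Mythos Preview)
Your strategy---freeze coefficients, estimate the error kernel via the Dini moduli $\psi$ and $|\ln r|\beta(r)$, close by a contraction---is the same circle of ideas the paper uses. But there is a genuine gap: your perturbation step will \emph{not} close globally under Assumption~\ref{assu: main} alone. When you integrate the second difference kernel $n(y,h)\bigl(|h|^{-d-\alpha(x)}-|h|^{-d-\alpha(y)}\bigr)$ against the frozen heat kernel $f_{t}^{y}$, the scale mismatch produces a factor of order $t^{-|\alpha(x)-\alpha(y)|/\alpha(y)}$ (see the paper's Lemma~\ref{lem:frac esti f_t}). For $|x-y|$ small this is harmless, because $\beta(|x-y|)|\ln t|$ controls the exponent; but for $|x-y|$ bounded away from zero the Dini modulus $\beta$ gives nothing, and one is left with $\int_{0}^{\delta}t^{-(\overline{\alpha}-\underline{\alpha})/\underline{\alpha}}(1+|\ln t|)\,\mathrm{d}t$, which diverges unless $\overline{\alpha}<2\underline{\alpha}$. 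The paper therefore first proves uniqueness under this extra oscillation constraint (Proposition~\ref{prop:Let--be main}), and only afterwards removes it by a localization argument using exit times from small balls (Section~5): one modifies $\alpha(\cdot)$ outside $B_{\delta}(x)$ so that the constraint is satisfied, invokes the special case, and then patches along the sequence $\tau_{n}$ of successive exit times, showing $\tau_{n}\to\infty$ a.s. Your remark about ``partition-of-unity patching'' is not the same mechanism and would not, by itself, give martingale-problem uniqueness.

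A second, smaller point: the paper does not actually construct a resolvent solution $u$ with $\lambda u-\mathcal{A}u=f$. Instead it uses the Bass--Perkins device: for two solutions $\mathbf{P}_{1},\mathbf{P}_{2}$, set $S_{\lambda}^{\Delta}g=\mathbf{E}_{1}\int_{0}^{\infty}e^{-\lambda t}g(X_{t})\,\mathrm{d}t-\mathbf{E}_{2}\int_{0}^{\infty}e^{-\lambda t}g(X_{t})\,\mathrm{d}t$, note $S_{\lambda}^{\Delta}(\lambda-\mathcal{A})\varphi=0$ for $\varphi\in C_{b}^{2}$, and apply this to the approximate parametrix $g_{\varepsilon}(x)=\int_{\varepsilon}^{\infty}\!\!\int e^{-\lambda t}f_{t}^{y}(y-x)g(y)\,\mathrm{d}y\,\mathrm{d}t$. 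The error estimate (Proposition~\ref{prop: <12}) then yields $|S_{\lambda}^{\Delta}g|\le\tfrac{1}{2}\Theta\|g\|$ with $\Theta=\sup_{\|g\|\le 1}|S_{\lambda}^{\Delta}g|$, forcing $\Theta=0$. This sidesteps the need to show that the Neumann series converges in a function class regular enough to feed back into the martingale identity, which in your scheme would be an additional technical burden.
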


In Theorem \ref{thm: main} our assumption on the functions $n(x,h)$
and $\alpha(x)$ is very mild. As a result, the weak uniqueness for
a large class of variable order stable-like processes now follows.
It's also worth noting that, even in the special case that $\alpha(x)$
is constant, Theorem \ref{thm: main} provides some new result for
uniqueness, since our assumption that $x\mapsto n(x,h)$ is Dini continuous
improves the Hölder continuity condition required in \cite{MR3145767}. 

To prove Theorem \ref{thm: main}, we use the technique introduced
in \cite{MR2642351}, where the uniqueness for martingale problem
was discussed in the context of elliptic diffusions. The core of this
technique is to approximate the semigroup of $\mathcal{A}$ by a mixture
of semigroups corresponding to constant coefficient operators $\mathcal{A}^{y}$
given by 
\[
\mathcal{A}^{y}f(x):=\int_{\mathbb{R}^{d}\backslash\{0\}}\left[f(x+h)-f(x)-\mathbf{1}_{\{|h|\le1\}}h\cdot\nabla f(x)\right]\frac{n(y,h)}{|h|^{d+\alpha(y)}}\mathrm{d}h.
\]
The method in \cite{MR2642351} is essentially a perturbation technique
which has its root in the parametrix method for the construction of
fundamental solutions of parabolic equations. The same idea was later
used in \cite{MR2802040,MR3573301} to obtain weak uniqueness of solutions
to some degenerate SDEs. Note that the approach in \cite{MR958291,tang2007uniqueness}
are similar to \cite{MR2642351}, with the difference that the perturbation
is carried out on the resolvent of $\mathcal{A}$. 

We now give a few remarks on some possible extensions of Theorem \ref{thm: main}. 
\begin{rem}
(1) Instead of $\mathcal{A}$, one can also consider the more general
operator $\tilde{\mathcal{A}}f(x):=\mathcal{A}f(x)+b(x)\cdot\nabla f(x)$,
where $b:\mathbb{R}^{d}\to\mathbb{R}^{d}$ is bounded and Dini continuous
(in the sense of Assumption \ref{assu: main}(b)). If Assumption \ref{assu: main}
holds and, in addition, $\inf_{x}\alpha(x)>1$, then we can combine
our methods and those of \cite{kulik2015weak} to show uniqueness
of the martingale problem for $\tilde{\mathcal{A}}$. 

(2) For simplicity, in this paper we have assumed the symmetry of
$n(x,h)$ in $h$. However, due to the recent works of \cite{MR3806688}
and \cite{jin2017heat}, it is not difficult to extend Theorem \ref{thm: main}
to non-symmetric $n(x,h)$ under the additional assumption that either
$\inf_{x}\alpha(x)>1$ or $\sup_{x}\alpha(x)<1$. 
\end{rem}

Recently, there has been a lot of works that exploit the parametrix
method to study the heat kernel of jump processes, see e.g. \cite{chen2015heat,Kim2017,MR3806688,jin2017heat,MR3765882,MR3877025,MR3912204,chen2019heat}.
It is worthwhile to mention that the above list is, by far, not complete.
For the variable order operator $\mathcal{A}$ as in \eqref{eq: defi A},
its heat kernel has been constructed and estimated in \cite{chen2019heat}.
Therein, the authors assumed slightly stronger conditions than we
did in Assumption \ref{assu: main}; more precisely, they assumed
additionally that $n(x,h),\alpha(x)$ are both Hölder continuous in
$x$, and that $\inf_{x}\alpha(x)$, $\sup_{x}\alpha(x)$ satisfy
an inequality so that the oscillation of the function $\alpha(x)$
can not be too large (see also Section 4 below for a similar condition
we will assume). It is an interesting question whether the results
of \cite{chen2019heat} can be extended to the case where $x\mapsto n(x,h)$
and $x\mapsto\alpha(x)$ merely satisfy some continuity condition
of Dini's type.\textcolor{red}{{} }

Let us eventually point out the fact that the term ``stable-like''
process is now broadly used in the literature, so that in a different
context it might means a process that differs from what we consider
here. For other types of stable-like processes (either symmetric or
non-symmetric), the reader is referred to \cite{MR1274897,MR1262797,MR1880349,MR2008600,MR2091550,MR2357678}. 

The rest of the paper is organized as follows. After a section on
preliminaries, where we collect some basic facts on stable-like Lévy
processes, in Section 3 we define the parametrix and derive some estimates
for it. In Sections 4 we prove a special case of Theorem \ref{thm: main},
namely, under the additional assumption that $\overline{\alpha}<2\underline{\alpha}$.
In Section 5 we remove this constraint and prove Theorem \ref{thm: main}
in its general form. 

\section{Preliminaries}

\subsection{Notation}

Here we give a few remarks on our notation. The letter $c$ with subscripts
will denote positive finite constants whose exact value is unimportant.
We write $C(d,\lambda,...)$ for a positive finite constant $C$ that
depends only on the parameters $d,\lambda,....$ For a function $f$
on $\Rd$, we will use $f(x\pm z)$ to denote $f(x+z)+f(x-z)$. If
$f$ is bounded, we write $\|f\|:=\sup_{x\in\Rd}|f(x)|$.  

\subsection{Convolution inequalities}

Throughout this section, let $[\alpha_{1},\alpha_{2}]$ be a compact
subinterval of the interval $(0,2)$. For $\beta,\gamma\in\BR$ and
$\alpha\in(0,2)$, we write 
\[
\varrho_{\alpha}^{\beta,\gamma}(t,x):=t^{\gamma/\alpha}(|x|^{\beta}\wedge1)(t^{1/\alpha}+|x|)^{-d-\alpha},\quad(t,x)\in(0,\infty)\times\Rd.
\]

\begin{lem}
\emph{\label{lem: conv ineq of chen}} There exists $C=C(d,\alpha_{1},\alpha_{2})>0$
such that for all $\alpha\in[\alpha_{1},\alpha_{2}]$ and ${\color{red}{\color{black}t>0}}$,
\begin{equation}
\int_{\Rd}\varrho_{\alpha}^{0,\alpha}(t,x)\mathrm{d}x\le C\quad\mbox{\mbox{and}\quad\ensuremath{\int_{\Rd}|\ln|x||\varrho_{\alpha}^{0,\alpha}(t,x)\mathrm{d}x\le C\left(1+|\ln t|\right).}}\label{esti1:rho}
\end{equation}
\end{lem}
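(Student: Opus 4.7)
The plan is to reduce both inequalities to a single scale-free integral by exploiting the homogeneity of $\varrho_\alpha^{0,\alpha}$. Writing out the definition with $\beta=0,\gamma=\alpha$ gives
\[
\varrho_\alpha^{0,\alpha}(t,x)=\frac{t}{(t^{1/\alpha}+|x|)^{d+\alpha}},
\]
and the natural substitution is $y=x/t^{1/\alpha}$, under which $dx=t^{d/\alpha}dy$ and the integrand becomes
\[
\varrho_\alpha^{0,\alpha}(t,x)\,dx=\frac{1}{(1+|y|)^{d+\alpha}}\,dy.
\]

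For the first inequality, after this substitution the $t$-dependence is gone entirely and it suffices to bound $\int_{\Rd}(1+|y|)^{-d-\alpha}dy$. Passing to polar coordinates, this equals $\omega_{d-1}\int_0^\infty r^{d-1}(1+r)^{-d-\alpha}dr$, which is finite for any $\alpha>0$; the value varies continuously in $\alpha$, so it is uniformly bounded on the compact interval $[\alpha_1,\alpha_2]$.

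For the second inequality, I would first apply $|\ln|x||=|\ln(t^{1/\alpha}|y|)|\le\alpha^{-1}|\ln t|+|\ln|y||$ and then split the integral:
\[
\int_{\Rd}|\ln|x||\varrho_\alpha^{0,\alpha}(t,x)\,dx\le\frac{|\ln t|}{\alpha}\int_{\Rd}\frac{dy}{(1+|y|)^{d+\alpha}}+\int_{\Rd}\frac{|\ln|y||}{(1+|y|)^{d+\alpha}}dy.
\]
The first integral is controlled exactly as above, and $\alpha\ge\alpha_1$ absorbs the factor $\alpha^{-1}$ into the constant. For the second integral, integrability near the origin follows since $|\ln|y||$ is locally integrable in $\Rd$ in any dimension, while at infinity the decay $(1+|y|)^{-d-\alpha}$ defeats $|\ln|y||$; again the bound depends continuously on $\alpha\in[\alpha_1,\alpha_2]$ and is therefore uniform. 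Combining the two parts yields the desired $C(1+|\ln t|)$.

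There is no real obstacle here: the only point that warrants attention is verifying that the constants one obtains genuinely depend only on $d,\alpha_1,\alpha_2$ and not on $\alpha$ itself. This is immediate from the monotonicity of $(1+r)^{-d-\alpha}$ in $\alpha$, which lets one dominate the integrals at $\alpha=\alpha_1$ (near infinity) and $\alpha=\alpha_2$ (near the origin is unaffected) and conclude uniform finiteness over the compact range of $\alpha$.
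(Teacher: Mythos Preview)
Your proof is correct and follows essentially the same approach as the paper: both use the scaling substitution $y=t^{-1/\alpha}x$ to reduce to a $t$-independent integral and the inequality $|\ln|x||\le\alpha^{-1}|\ln t|+|\ln|y||$ to handle the logarithmic factor. The only cosmetic difference is that the paper first treats the case $t=1$ and then scales, whereas you scale directly.
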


\begin{proof}
We only prove the second inequality, since the first one is similar
and simpler. For $t=1$, we have 
\begin{align*}
\int_{\Rd}|\ln|x||\varrho_{\alpha}^{0,\alpha}(1,x)\mathrm{d}x & \le\int_{|x|\le1}|\ln|x||\mathrm{d}x+\int_{|x|>1}|x|^{-d-\alpha}|\ln|x||\mathrm{d}x\\
 & \le c_{1}+\int_{|x|>1}|x|^{-d-\alpha_{1}}|\ln|x||\mathrm{d}x\le c_{2},
\end{align*}
where $c_{2}=c_{2}(d,\alpha_{1})$ is a constant. For a general $t>0$,
by a change of variables $x':=t^{-1/\alpha}x$, we get
\begin{align*}
\int_{\Rd}|\ln|x||\varrho_{\alpha}^{0,\alpha}(t,x)\mathrm{d}x & \le\int_{\Rd}\left(\alpha^{-1}|\ln t|+|\ln|x||\right)\varrho_{\alpha}^{0,\alpha}(1,x)\mathrm{d}x\\
 & \le\alpha_{1}^{-1}|\ln t|\int_{\Rd}\varrho_{\alpha}^{0,\alpha}(1,x)\mathrm{d}x+c_{2}\le c_{3}\left(1+|\ln t|\right).
\end{align*}
 
\end{proof}
Later on, we need to compute convolutions of kernels $\varrho_{\alpha}^{0,\alpha}$
and $\varrho_{\tilde{\alpha}}^{0,\tilde{\alpha}}$ with different
indices $\alpha$ and $\tilde{\alpha}.$ The following inequality
(\ref{eq: Lemma 2.2}) provides an estimate on convolutions of this
type, which is not very precise but adequate for our propose. We remark
that a similar inequality to (\ref{eq: Lemma 2.2}) was implicitly
used in the proof of \cite[Lemma 5.2]{MR1744782}. 
\begin{lem}
\label{lem 1: conv different alpha} There exists $C=C(d,\alpha_{1},\alpha_{2})>0$
such that for all $|w|>0$ and $0<\tau<t\le1$,
\begin{align}
 & \int_{\Rd}\varrho_{\tilde{\alpha}}^{0,\tilde{\alpha}}(t-\tau,w-\eta)\varrho_{\alpha}^{0,\alpha}(\tau,\eta)\mathrm{d}\eta\nonumber \\
 & \quad\le C\exp\{|\alpha-\tilde{\alpha}|\cdot|\ln|w||\}\cdot\Big\{\varrho_{\alpha}^{0,\alpha}(t,w)+\varrho_{\tilde{\alpha}}^{0,\tilde{\alpha}}(t,w)\Big\},\label{eq: Lemma 2.2}
\end{align}
uniformly for $\alpha,\tilde{\alpha}\in[\alpha_{1},\alpha_{2}]$.
\end{lem}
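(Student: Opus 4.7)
Plan: I would prove the inequality by a three-region decomposition of the integration domain combined with a refined mass bound, followed by a case analysis on $|w|$ relative to the four diffusive scales. Write $I = I_{(i)}+I_{(ii)}+I_{(iii)}$, where the three regions are $\{|\eta|\le|w|/2\}$, $\{|w-\eta|\le|w|/2\}$, and their complement. On region (i), $|w-\eta|\ge|w|/2$, so by radial monotonicity we can replace the outer kernel by its value at $|w|/2$; a symmetric argument applies on (ii). The crucial ingredient is the refined mass bound
\[\int_{|\eta|\le R}\varrho_{\alpha}^{0,\alpha}(\tau,\eta)\,\mathrm{d}\eta \le C\min\bigl(1,\,R^{d}\tau^{-d/\alpha}\bigr),\]
which follows from the pointwise estimate $\varrho_{\alpha}^{0,\alpha}(\tau,\eta)\le\tau^{-d/\alpha}$ combined with Lemma \ref{lem: conv ineq of chen}. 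On the complementary region (iii), bounding one kernel by its value at $|w|/2$ and using the tail estimate $\int_{|\eta|>R}\varrho_{\alpha}^{0,\alpha}(\tau,\eta)\,\mathrm{d}\eta\le C\tau/(\tau^{1/\alpha}+R)^{\alpha}$ for the other yields a product bound.

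The second step is to compare each of the resulting pieces with the right-hand side. The bounds from the first step involve the scales $A:=(t-\tau)^{1/\tilde{\alpha}}$, $B:=\tau^{1/\alpha}$ and $R:=|w|/2$, whereas the target involves $t^{1/\alpha}$, $t^{1/\tilde{\alpha}}$ and $|w|$. A routine case analysis on $|w|$ versus the four scales (all at most $1$ since $t\le 1$) reduces the verification to a handful of situations in which the correction factor $|w|^{\pm|\alpha-\tilde{\alpha}|}$ exactly accounts for the mismatch between the polynomial decay rates $|w|^{-(d+\alpha)}$ and $|w|^{-(d+\tilde{\alpha})}$ (in the ballistic regime $|w|$ larger than all the scales), or between $t^{-d/\alpha}$ and $t^{-d/\tilde{\alpha}}$ (in the diffusive regime where $|w|$ is smaller than the scales). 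The restriction $\alpha,\tilde{\alpha}\in[\alpha_{1},\alpha_{2}]\subset(0,2)$ keeps every exponent in the analysis bounded.

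The main obstacle I anticipate is securing the bound on $I_{(i)}$ in the delicate regime where $|w|$ is small and $\tau$ is much smaller than $t$ (and symmetrically for $I_{(ii)}$ with $t-\tau$ small). There, the naive supremum bound on $\varrho_{\tilde{\alpha}}^{0,\tilde{\alpha}}(t-\tau,\cdot)$ multiplied by the trivial total-mass bound $C$ overestimates by a factor of order $(t/\tau)^{d/\alpha}$, which cannot be absorbed into the correction factor $e^{|\alpha-\tilde{\alpha}||\ln|w||}$. The refined bound $\min(1,R^{d}\tau^{-d/\alpha})$ supplies precisely the needed cancellation, since when $R=|w|/2$ is smaller than $\tau^{1/\alpha}$ the ball $\{|\eta|\le R\}$ captures only the fraction $(R/\tau^{1/\alpha})^{d}$ of the mass of $\varrho_{\alpha}^{0,\alpha}(\tau,\cdot)$. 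With this refinement in place, the inequality reduces to checking algebraic relations between the various powers of the scales, which is then a matter of book-keeping.
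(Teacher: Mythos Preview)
Your approach is workable but more laborious than the paper's. The paper avoids both the three-region decomposition and the refined mass bound by first fixing $\tilde{\alpha}\le\alpha$ (without loss of generality) and then splitting on $|w|$ versus the single scale $t^{1/\tilde{\alpha}}$. When $|w|\ge t^{1/\tilde{\alpha}}$, a two-region split $D_1=\{|w-\eta|\ge|w|/2\}$, $D_2=D_1^{c}$ already suffices: on $D_1$ the outer kernel is dominated by $c\,\varrho_{\tilde{\alpha}}^{0,\tilde{\alpha}}(t,w)$, and on $D_2$ (where $|\eta|\ge|w|/2$) the inner kernel is $\le c\,t|w|^{-d-\alpha}=c\,\varrho_{\tilde{\alpha}}^{0,\tilde{\alpha}}(t,w)\cdot|w|^{\tilde{\alpha}-\alpha}$, which is exactly where the correction factor $\exp\{|\alpha-\tilde{\alpha}|\,|\ln|w||\}$ enters. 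When $|w|\le t^{1/\tilde{\alpha}}$, the paper uses a different and very short observation: either $\tau\ge t/2$ or $t-\tau\ge t/2$, and in each case one of the two kernels is bounded pointwise by a constant multiple of $t^{-d/\alpha}$ or $t^{-d/\tilde{\alpha}}$, after which Lemma~\ref{lem: conv ineq of chen} applied to the remaining kernel finishes. This $\tau$-versus-$t/2$ trick completely sidesteps the delicate small-$|w|$ regime you flagged, rendering the refined mass bound and the four-scale case analysis unnecessary. Your route would still go through, but at the cost of a substantially longer verification.
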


\begin{proof}
We follow the proof of \cite[Lemma 5.2]{MR1744782}. Without loss
of generality, assume $\tilde{\alpha}\le\alpha$. Let $I$ denote
the integral from the left-hand side of (\ref{eq: Lemma 2.2}). We
need to consider two cases. 

(i) Suppose that $t^{1/\tilde{\alpha}}\leq|w|$. Let $D_{1}:={\displaystyle \{\eta:|w-\eta|\geq|w|/2\}}$
and $D_{2}$ be its complement. We now write $I=I_{1}+I_{2}$, where
$I_{1}$ and $I_{2}$ denote the corresponding integrals on $D_{1}$
and $D_{2}$, respectively. In $D_{1},$ 
\[
\varrho_{\tilde{\alpha}}^{0,\tilde{\alpha}}(t-\tau,w-\eta)\le\frac{t-\tau}{|w-\eta|^{d+\tilde{\alpha}}}\le c_{1}\frac{t}{|w|^{d+\tilde{\alpha}}}\le c_{2}\varrho_{\tilde{\alpha}}^{0,\tilde{\alpha}}(t,w),
\]
and therefore, due to Lemma \ref{lem: conv ineq of chen}, $I_{1}\le c_{3}\varrho_{\tilde{\alpha}}^{0,\tilde{\alpha}}(t,w)$.
Next, in $D_{2},$ we have $|\eta|\ge|w|/2$ and thus
\[
\varrho_{\alpha}^{0,\alpha}(\tau,\eta)\le\frac{\tau}{|\eta|^{d+\alpha}}\le\frac{c_{4}t}{|w|^{d+\alpha}}=\frac{c_{4}t}{|w|^{d+\tilde{\alpha}}}|w|^{\tilde{\alpha}-\alpha}\le c_{5}\varrho_{\tilde{\alpha}}^{0,\tilde{\alpha}}(t,w)\exp\{|\alpha-\tilde{\alpha}|\cdot|\ln|w||\},
\]
which implies $I_{2}\le c_{6}\varrho_{\tilde{\alpha}}^{0,\tilde{\alpha}}(t,w)\exp\{|\alpha-\tilde{\alpha}|\cdot|\ln|w||\}$.

(ii) Let ${\color{red}{\normalcolor |w|\leq t^{1/\tilde{\alpha}}\le t^{1/\alpha}}}$.
If ${\displaystyle \tau\geq t/2}$, then 
\[
\varrho_{\alpha}^{0,\alpha}(\tau,\eta)\le\tau^{-d/\alpha}\le c_{7}t^{-d/\alpha}\le c_{8}\varrho_{\alpha}^{0,\alpha}(t,w),
\]
and if $t-{\displaystyle \tau\geq t/2}$, then 
\[
\varrho_{\tilde{\alpha}}^{0,\tilde{\alpha}}(t-\tau,w-\eta)\le(t-\tau)^{-d/\tilde{\alpha}}\le c_{9}t^{-d/\tilde{\alpha}}\le c_{10}\varrho_{\tilde{\alpha}}^{0,\tilde{\alpha}}(t,w).
\]
In both cases we obtain the desired estimate by Lemma \ref{lem: conv ineq of chen}.
This completes the proof.
\end{proof}
\begin{lem}
There exists $C=C(d,\alpha_{1},\alpha_{2})>0$ such that for all $0<|w|\le1$
and $0<\tau<t\le1$,
\begin{align}
 & \int_{\Rd}\mathbf{1}_{\{|w-\eta|\ge2\}}\ln\left(|w-\eta|\right)\varrho_{\tilde{\alpha}}^{0,\tilde{\alpha}}(t-\tau,w-\eta)\varrho_{\alpha}^{0,\alpha}(\tau,\eta)\mathrm{d}\eta\nonumber \\
 & \quad\le C(1+|\ln\tau|+|\ln(t-\tau)|)\exp\{|\alpha-\tilde{\alpha}|\cdot|\ln|w||\}\nonumber \\
 & \qquad\times\Big\{\varrho_{\alpha}^{0,\alpha}(t,w)+\varrho_{\tilde{\alpha}}^{0,\tilde{\alpha}}(t,w)\Big\},\label{eq: Lemma 2.4}
\end{align}
uniformly for $\alpha,\tilde{\alpha}\in[\alpha_{1},\alpha_{2}]$.
\end{lem}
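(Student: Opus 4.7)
The plan is to reuse the case split from the proof of Lemma \ref{lem 1: conv different alpha} and track how the extra factor $\mathbf{1}_{\{|w-\eta|\ge 2\}}\ln|w-\eta|$ is absorbed into a logarithmic prefactor via the second inequality of Lemma \ref{lem: conv ineq of chen}. Without loss of generality assume $\tilde\alpha\le\alpha$. The key elementary observation is that because $|w|\le 1$, on the set $\{|w-\eta|\ge 2\}$ the triangle inequality gives $|\eta|\ge 1$ and $|w-\eta|\le 1+|\eta|\le 2|\eta|$, so
\[
\ln|w-\eta|\le \ln 2+|\ln|\eta||.
\]
This lets us freely swap the ``awkward'' log of $|w-\eta|$ for a log of $|\eta|$ when integrating against the $\eta$-kernel, and keep it as is (via a translation substitution) when integrating against the $(w-\eta)$-kernel.

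First I would treat case (i), $t^{1/\tilde\alpha}\le |w|$, by splitting into $D_1=\{|w-\eta|\ge|w|/2\}$ and its complement $D_2$, as in Lemma \ref{lem 1: conv different alpha}. On $D_2$ one has $|w-\eta|<|w|/2\le 1/2$, which contradicts $|w-\eta|\ge 2$, so $D_2$ contributes nothing. On $D_1$ the bound $\varrho_{\tilde\alpha}^{0,\tilde\alpha}(t-\tau,w-\eta)\le c\,\varrho_{\tilde\alpha}^{0,\tilde\alpha}(t,w)$ from the proof of Lemma \ref{lem 1: conv different alpha} still applies, and the remaining integral $\int \mathbf{1}_{\{|w-\eta|\ge 2\}}\ln|w-\eta|\,\varrho_\alpha^{0,\alpha}(\tau,\eta)\mathrm{d}\eta$ is bounded, after applying the elementary observation and the second inequality of Lemma \ref{lem: conv ineq of chen}, by $c(1+|\ln\tau|)$. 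This yields the claim in case (i), with the factor $\exp\{|\alpha-\tilde\alpha|\cdot|\ln|w||\}\ge 1$ trivially included.

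Next I would handle case (ii), $|w|\le t^{1/\tilde\alpha}\le t^{1/\alpha}$, by sub-splitting into $\tau\ge t/2$ or $t-\tau\ge t/2$. In each sub-case, bound one $\varrho$-kernel pointwise by $c\,\varrho^{0,\cdot}_{\cdot}(t,w)$ exactly as in Lemma \ref{lem 1: conv different alpha} (using $|w|\le t^{1/\alpha}$ or $|w|\le t^{1/\tilde\alpha}$ to pass from $\tau^{-d/\alpha}$, resp.\ $(t-\tau)^{-d/\tilde\alpha}$, to $\varrho^{0,\alpha}_\alpha(t,w)$, resp.\ $\varrho^{0,\tilde\alpha}_{\tilde\alpha}(t,w)$). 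For the remaining one-kernel integral, when $\tau\ge t/2$ the substitution $y=w-\eta$ and the second inequality of Lemma \ref{lem: conv ineq of chen} give a factor $c(1+|\ln(t-\tau)|)$; when $t-\tau\ge t/2$ the elementary observation reduces matters to $\int(1+|\ln|\eta||)\varrho_\alpha^{0,\alpha}(\tau,\eta)\mathrm{d}\eta\le c(1+|\ln\tau|)$. Summing all contributions and using $\exp\{|\alpha-\tilde\alpha|\cdot|\ln|w||\}\ge 1$ as before delivers the stated bound.

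The main obstacle is really only the careful bookkeeping of which logarithm attaches to which kernel: the factor $\ln|w-\eta|$ is not a priori controlled by either $\varrho$-kernel alone, but the cutoff $|w-\eta|\ge 2$ combined with $|w|\le 1$ collapses this ambiguity, allowing us to trade $\ln|w-\eta|$ for $|\ln|\eta||$ whenever convenient. Once that trade is made, no new estimate beyond Lemmas \ref{lem: conv ineq of chen} and \ref{lem 1: conv different alpha} is required, and the proof is essentially a careful replay of the previous lemma.
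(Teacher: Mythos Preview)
Your proposal is correct and follows essentially the same approach as the paper: the same elementary observation $\ln|w-\eta|\le\ln(2|\eta|)$, the same $D_1/D_2$ split in case (i), and the same $\tau\ge t/2$ versus $t-\tau\ge t/2$ sub-split in case (ii). Your treatment of case (i) is in fact slightly cleaner than the paper's: you notice that $D_2$ is disjoint from $\{|w-\eta|\ge 2\}$ (since $|w|\le 1$ forces $|w-\eta|<1/2$ on $D_2$), whereas the paper still bounds the $D_2$ contribution nontrivially, producing the $\exp\{|\alpha-\tilde\alpha|\cdot|\ln|w||\}$ factor there that is not actually needed in this case.
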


\begin{proof}
Without loss of generality, assume $\tilde{\alpha}\le\alpha$. Let
$I$ denote the integral from the left-hand side of (\ref{eq: Lemma 2.4}).
Note that if $|w-\eta|\ge2$, then $|\eta|\ge1\ge|w|$ and thus
\begin{equation}
\mathbf{1}_{\{|w-\eta|\ge2\}}\ln|w-\eta|\le\mathbf{1}_{\{|w-\eta|\ge2\}}\ln\left(|\eta|+|w|\right)\le\ln\left(2|\eta|\right).\label{eq1: lemma 2.3}
\end{equation}

(1) Suppose $t^{1/\tilde{\alpha}}\leq|w|$. Define $D_{1}:={\displaystyle \{\eta:|w-\eta|\geq|w|/2\}}$
and $D_{2}$ as its complement. We now write $I=I_{1}+I_{2}$, where
$I_{1}$ and $I_{2}$ denote the corresponding integrals on $D_{1}$
and $D_{2}$, respectively. As shown in the proof of the preceding
lemma, in $D_{1},$ we have 
\[
\varrho_{\tilde{\alpha}}^{0,\tilde{\alpha}}(t-\tau,w-\eta)\le c_{1}\varrho_{\tilde{\alpha}}^{0,\tilde{\alpha}}(t,w),
\]
and, due to (\ref{eq1: lemma 2.3}) and Lemma \ref{lem: conv ineq of chen},
$I_{1}\le c_{2}(1+|\ln\tau|)\varrho_{\tilde{\alpha}}^{0,\tilde{\alpha}}(t,w)$.
Similarly, in $D_{2},$ we have 
\[
\varrho_{\alpha}^{0,\alpha}(\tau,\eta)\le c_{3}\varrho_{\tilde{\alpha}}^{0,\tilde{\alpha}}(t,w)\exp\{|\alpha-\tilde{\alpha}|\cdot|\ln|w||\}
\]
and $I_{2}\le c_{4}(1+|\ln\left(t-\tau\right)|)\varrho_{\tilde{\alpha}}^{0,\tilde{\alpha}}(t,w)\exp\{|\alpha-\tilde{\alpha}|\cdot|\ln|w||\}$.

(2) Let $|w|\leq t^{1/\tilde{\alpha}}$. In view of part (ii) of the
proof of Lemma \ref{lem 1: conv different alpha}, we obtain the same
estimate for the integral as in case (1). The proof of the lemma is
complete.
\end{proof}

\subsection{Density functions of stable-like Lévy processes}

In this section, as in the previous one, we assume that $[\alpha_{1},\alpha_{2}]$
is a compact subinterval of $(0,2)$. Moreover, let $\Lambda_{1},\Lambda_{2}$
be some fixed constants with $0<\Lambda_{1}<\Lambda_{2}<\infty$.

Consider a Lévy process $Z=(Z_{t})_{t\geq0}$ such that $Z_{0}=0$
a.s. and 
\begin{align*}
\bfE\big[e^{iZ_{t}\cdot u}\big] & =e^{-t\psi(u)},\quad u\in\Rd,\\
\psi(u) & =-\int_{\mathbb{R}^{d}\setminus\{0\}}\Big(e^{iu\cdot h}-1-\mathbf{1}_{\{|h|\le1\}}iu\cdot h\Big)K(h)\mathrm{d}h,
\end{align*}
where the function $K:\Rd\to\mathbb{R}$ satisfies 

\begin{equation}
K(h)=K(-h)\quad\mbox{and}\quad\frac{\Lambda_{1}}{|h|^{d+\alpha}}\le K(h)\le\frac{\Lambda_{2}}{|h|^{d+\alpha}},\quad h\in\mathbb{R}^{d},\label{condition1forK}
\end{equation}
for some $\alpha\in[\alpha_{1},\alpha_{2}]$.

In view of (\ref{condition1forK}), we call $Z$ a stable-like Lévy
process. Note that 
\begin{equation}
{\normalcolor \int_{\mathbb{R}^{d}\setminus\{0\}}\Big((1-\cos(u\cdot h))\Big)\frac{1}{|h|^{d+\alpha}}\mathrm{d}h=C_{\alpha}|u|^{\alpha}},\label{eq: psi-tilde psi}
\end{equation}
where 
\begin{equation}
C_{\alpha}:=\int_{\mathbb{R}^{d}\setminus\{0\}}(1-\cos(h_{1}))|h|^{-d-\alpha}\mathrm{d}h\label{eq: defi C_alpha}
\end{equation}
is a positive constant that depends continuously on $\alpha$. Since
$K(h)=K(-h)$, it holds that 
\begin{equation}
|e^{-t\psi(u)}|=\exp\left\{ -t\int_{\mathbb{R}^{d}\setminus\{0\}}(1-\cos(u\cdot h))K(h)\mathrm{d}h\right\} \overset{(\ref{condition1forK}),(\ref{eq: psi-tilde psi})}{\le}e^{-t\Lambda_{1}C_{\alpha}|u|^{\alpha}}.\label{MPAFLsect31-1-1}
\end{equation}

By (\ref{MPAFLsect31-1-1}), the law of $Z_{t}$ has a density $f_{t}^{(\alpha)}\in L^{1}(\mathbb{R}^{d})\cap C_{b}(\mathbb{R}^{d})$
that is given by 
\begin{equation}
f_{t}^{(\alpha)}(x)=\frac{1}{(2\pi)^{d}}\int_{\mathbb{R}^{d}}e^{-iu\cdot x}e^{-t\psi(u)}\mathrm{d}u,\quad x\in\mathbb{R}^{d},\ t>0.\label{MPAFLsect312-1}
\end{equation}

\begin{rem}
We have used the notation $f_{t}^{(\alpha)}$ to indicate its dependence
on $\alpha$ (see \eqref{condition1forK}). Here $\alpha$ is allowed
to vary between $\alpha_{1}$ and $\alpha_{2}$. On the other hand,
the constants $\alpha_{1},\alpha_{2},\Lambda_{1},\Lambda_{2}$ are
assumed to be fixed. 
\end{rem}

First, we have the following estimates on $f_{t}^{(\alpha)}$, which
is a special case of \cite[Proposition 3.2]{Kim2017}. 
\begin{lem}
\label{cor:For-each-k}For each \textcolor{black}{$k\in\mathbb{Z}_{+}$,
}there exists $C=C(d,\alpha_{1},\alpha_{2},\Lambda_{1},\Lambda_{2},k)>0$
such that \textcolor{black}{
\begin{equation}
|\nabla^{k}f_{t}^{(\alpha)}(x)|\le{\color{red}{\color{black}Ct^{1-k/\alpha}\left(t^{1/\alpha}+|x|\right)^{-d-\alpha}}},\quad x\in\Rd,\ t>0,\label{bound:gradientg_t-1}
\end{equation}
uniformly for $\alpha\in[\alpha_{1},\alpha_{2}]$.} 
\end{lem}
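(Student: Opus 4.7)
The plan is to prove the bound directly from the Fourier representation (\ref{MPAFLsect312-1}) via the standard two-regime dichotomy $|x|\le t^{1/\alpha}$ versus $|x|>t^{1/\alpha}$, which is the usual route for pointwise estimates on symmetric stable-like densities. Justified by the exponential decay in (\ref{MPAFLsect31-1-1}), I would differentiate (\ref{MPAFLsect312-1}) under the integral sign to write
\[
\nabla^{k}f_{t}^{(\alpha)}(x)=\frac{1}{(2\pi)^{d}}\int_{\Rd}(-iu)^{k}e^{-iu\cdot x}e^{-t\psi(u)}\mathrm{d}u,
\]
and treat the two regimes separately, tracking the $\alpha$-uniformity of constants throughout.

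In the near-field regime $|x|\le t^{1/\alpha}$, I would bound the modulus of the integrand using (\ref{MPAFLsect31-1-1}) and rescale $u\mapsto t^{-1/\alpha}v$, obtaining $|\nabla^{k}f_{t}^{(\alpha)}(x)|\le Ct^{-(d+k)/\alpha}$, where the remaining constant involves only $\int_{\Rd}|v|^{k}e^{-\Lambda_{1}C_{\alpha}|v|^{\alpha}}\mathrm{d}v$. This integral is uniformly bounded on $[\alpha_{1},\alpha_{2}]$ since $\alpha\mapsto C_{\alpha}$ from (\ref{eq: defi C_alpha}) is continuous and strictly positive. Since $t^{1/\alpha}+|x|\asymp t^{1/\alpha}$ here, the resulting bound matches $t^{1-k/\alpha}(t^{1/\alpha}+|x|)^{-d-\alpha}$ up to constants, which is exactly (\ref{bound:gradientg_t-1}).

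In the far-field regime $|x|>t^{1/\alpha}$, I would use the integration-by-parts identity $(1-\Delta_{u})^{M}e^{-iu\cdot x}=(1+|x|^{2})^{M}e^{-iu\cdot x}$ for an integer $M$ with $2M>d+\alpha_{2}+k$, which transfers $2M$ derivatives onto $(-iu)^{k}e^{-t\psi(u)}$ at the cost of a factor $(1+|x|^{2})^{-M}\le|x|^{-2M}$. The key analytic input is the derivative bound $|\partial_{u}^{\gamma}\psi(u)|\le C_{\gamma}(1\wedge|u|^{\alpha-|\gamma|})$ for multi-indices $|\gamma|\ge1$, which follows from differentiating under the integral in the L\'evy--Khintchine representation and using the kernel comparability (\ref{condition1forK}) together with the symmetry $K(h)=K(-h)$. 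Fa\`a di Bruno's formula and Leibniz's rule then yield a bound on $|\partial_{u}^{\beta}[(-iu)^{k}e^{-t\psi(u)}]|$ whose $L^{1}(\mathrm{d}u)$ norm, by the same rescaling as above, is of order $t^{(2M-d-k)/\alpha}$. Therefore $|\nabla^{k}f_{t}^{(\alpha)}(x)|\le C|x|^{-2M}t^{(2M-d-k)/\alpha}$, and the condition $|x|>t^{1/\alpha}$ combined with $2M>d+\alpha_{2}$ shows that this is dominated by $Ct^{1-k/\alpha}|x|^{-d-\alpha}\asymp t^{1-k/\alpha}(t^{1/\alpha}+|x|)^{-d-\alpha}$.

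The main obstacle is establishing the derivative bounds on $e^{-t\psi(u)}$ with constants that are uniform in $\alpha\in[\alpha_{1},\alpha_{2}]$ through the Fa\`a di Bruno computation; this rests on the continuity of $\alpha\mapsto C_{\alpha}$, the $\alpha$-uniformity of the kernel bounds (\ref{condition1forK}), and the care needed to handle $\partial^{\gamma}\psi$ near $u=0$, where the naive homogeneous estimate $|u|^{\alpha-|\gamma|}$ diverges and must be replaced by a bounded one coming from the truncated second moment of the L\'evy measure. Since the statement is explicitly noted to be a special case of \cite[Proposition~3.2]{Kim2017}, one may alternatively simply invoke that proposition after verifying that its hypotheses hold uniformly on $[\alpha_{1},\alpha_{2}]$.
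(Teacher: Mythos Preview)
Your near-field argument is fine. The far-field argument via $(1-\Delta_u)^M$ has a genuine gap: the bound $|\partial_u^\gamma\psi(u)|\le C_\gamma(1\wedge|u|^{\alpha-|\gamma|})$ is false for kernels $K$ satisfying (\ref{condition1forK}). Since $K(h)\asymp|h|^{-d-\alpha}$ for \emph{all} $h$, the derivatives $\partial^\gamma\psi$ with $|\gamma|>\alpha$ blow up at $u=0$ like $|u|^{\alpha-|\gamma|}$; for instance, boundedness of $\partial^2\psi$ near $0$ would require $\int|h|^2K(h)\,\mathrm{d}h<\infty$, which fails. The ``truncated second moment'' only tames the small-jump piece $K\mathbf{1}_{\{|h|\le1\}}$; the remaining large-jump symbol still behaves like $|u|^{\alpha}$ near the origin and inherits the same singularity in its second derivatives. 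With the correct homogeneous bound $|\partial^\gamma\psi|\lesssim|u|^{\alpha-|\gamma|}$, the Fa\`a di Bruno expansion of $(1-\Delta_u)^M e^{-t\psi}$ contains a term of order $t|u|^{\alpha-2M}$ near $u=0$, whose $L^1(\mathrm{d}u)$-norm diverges exactly when $2M\ge d+\alpha$---precisely the range needed to reach the decay $|x|^{-d-\alpha}$. Integer-order integration by parts cannot produce the non-integer exponent $d+\alpha$; making this route work requires a dyadic decomposition or a small-jump/large-jump splitting with a separate compound-Poisson tail analysis, neither of which you sketch.

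The paper follows the alternative you mention at the end. It verifies that the hypotheses of \cite[Proposition~3.2]{Kim2017} hold uniformly in $\alpha\in[\alpha_1,\alpha_2]$: the kernel $K$ is two-sided comparable to the L\'evy density $C_\alpha^{-1}|h|^{-d-\alpha}$ of the isotropic $\alpha$-stable process with a comparison constant $\hat\gamma_0$ depending only on $d,\alpha_1,\alpha_2,\Lambda_1,\Lambda_2$, and $\Phi(r)=r^\alpha$ satisfies the lower scaling condition $\lambda^{\alpha_1}\Phi(r)\le\Phi(\lambda r)$ for $\lambda,r\ge1$. That proposition then gives (\ref{bound:gradientg_t-1}) at $t=1$, and the general $t>0$ follows by noting that the rescaled process $Y_s:=t^{-1/\alpha}Z_{ts}$ again has a L\'evy kernel obeying (\ref{condition1forK}) with the same constants $\Lambda_1,\Lambda_2$, so the $t=1$ estimate transfers directly.
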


\begin{proof}
\textcolor{black}{Let $S=(S_{t})_{t\ge0}$ be a $d$-dimensional subordinate
Brownian motion via an independent subordinator with Laplace exponent
$\phi(\lambda)=\lambda^{\alpha/2}$. Set $\Phi(r)=r^{\alpha}$, $r>0$.
Then the characteristic exponent of $S$ is given by $\Phi(|u|)=|u|^{\alpha}$,
$u\in\Rd$. In view of (}\ref{eq: psi-tilde psi}\textcolor{black}{)
and (}\ref{eq: defi C_alpha}), the\textcolor{black}{{} Lévy measure
$\mu$ of $S$ has a density (with respect to the Lebesgue measure)
given by 
\[
j(h)=\frac{C_{\alpha}^{-1}}{|h|^{d+\alpha}},\quad h\in{\color{red}{\color{black}\Rd\setminus\{0\}}}.
\]
By (}\ref{condition1forK}\textcolor{black}{), we get 
\[
C_{\alpha}\Lambda_{1}j(h)\le K(h)\le C_{\alpha}\Lambda_{2}j(h),\quad h\in{\color{red}{\color{black}\Rd\setminus\{0\}}}.
\]
Set 
\[
\hat{\gamma}_{0}:=\max\left\{ \Lambda_{2}\sup_{\alpha\in[\alpha_{1},\alpha_{2}]}C_{\alpha},\ \Lambda_{1}^{-1}\sup_{\alpha\in[\alpha_{1},\alpha_{2}]}\left(C_{\alpha}\right)^{-1}\right\} .
\]
Then $\hat{\gamma}_{0}>0$ is a constant depending only on $d,\alpha_{1},\alpha_{2},\Lambda_{1},\Lambda_{2}$,
and we have 
\begin{equation}
\hat{\gamma}_{0}^{-1}j(h)\le K(h)\le\hat{\gamma}_{0}j(h),\quad h\in{\normalcolor {\color{red}{\color{black}\Rd\setminus\{0\}}}}.\label{eq:1, gamma hat}
\end{equation}
Note that 
\begin{equation}
\lambda^{\alpha_{1}}\Phi(r)\le\lambda^{\alpha}\Phi(r)=\Phi(\lambda r),\quad\lambda\geq1,r\geq1.\label{eq: lower scaling conditon}
\end{equation}
By }\eqref{eq:1, gamma hat} and \eqref{eq: lower scaling conditon},
we can\textcolor{black}{{} apply }\cite[Proposition 3.2]{Kim2017}\textcolor{black}{{}
 to find a constant $c_{1}=c_{1}(d,\alpha_{1},\alpha_{2},\Lambda_{1},\Lambda_{2},k)>0$
such that 
\begin{equation}
|\nabla^{k}f_{1}^{(\alpha)}(x)|\le{\color{red}{\color{black}c_{1}\left(1+|x|\right)^{-d-\alpha}}},\quad x\in\Rd.\label{eq: esti grad k f_1}
\end{equation}
}

Let $a>0$ and define $Y_{t}:=aZ_{a^{-\alpha}t}$, $t\ge0$. Then
$(Y_{t})$ is a pure-jump Lévy process with jump kernel $M(h):=a^{-d-\alpha}K(a^{-1}h)$,
$h\in\Rd$. Moreover, the function $M$ satisfies

\begin{equation}
M(h)=M(-h)\quad\mbox{and}\quad\frac{\Lambda_{1}}{|h|^{d+\alpha}}\le M(h)\le\frac{\Lambda_{2}}{|h|^{d+\alpha}},\quad h\in\mathbb{R}^{d}.\label{condition1forM}
\end{equation}
Therefore, $(Y_{t})$ is also a stable-like Lévy process. Let $\rho(x),$
$x\in\Rd,$ be the probability density of $Y_{1}$. By \eqref{condition1forM}
and (\ref{eq: esti grad k f_1}), we have 
\begin{equation}
|\nabla^{k}\rho(x)|\le{\color{red}{\color{black}c_{1}\left(1+|x|\right)^{-d-\alpha}}},\quad x\in\Rd.\label{eq: grad rho}
\end{equation}
We now choose $a$ such that $a^{-\alpha}=t$. Then $Y_{1}=t^{-1/\alpha}Z_{t}$
and 
\[
\rho(x)=t^{d/\alpha}f_{t}^{(\alpha)}(t^{1/\alpha}x),\quad x\in\Rd.
\]
So $\nabla^{k}\rho(x)=t^{\left(d+k\right)/\alpha}\nabla^{k}f_{t}^{(\alpha)}(t^{1/\alpha}x)$,
and the estimate (\ref{bound:gradientg_t-1}) follows from \eqref{eq: grad rho}.
\end{proof}
Following \cite{chen2015heat}, for a function $\varphi$ on $\Rd$,
we write 
\[
\delta_{\varphi}(x;z):=\varphi(x+z)+\varphi(x-z)-2\varphi(x).
\]

\begin{rem}
By \cite[Proposition 3.3]{Kim2017} and the same argument as in the
proof of Lemma \ref{cor:For-each-k}, we can find a constant $C=C(d,\alpha_{1},\alpha_{2},\Lambda_{1},\Lambda_{2})>0$
such that 
\begin{equation}
|\delta_{f_{t}^{(\alpha)}}(x;h)|\le C((t^{-\frac{2}{\alpha}}|h|^{2})\wedge1)(\rho_{\alpha}^{0,\alpha}(t,x\pm h)+\rho_{\alpha}^{0,\alpha}(t,x)),\quad t>0,\ x,h\in\Rd,\label{esti: delta g_t}
\end{equation}
uniformly for \textcolor{red}{${\color{black}\alpha\in[\alpha_{1},\alpha_{2}]}$}. 
\end{rem}

Since $K$ is a symmetric function, we have, for each $\varphi\in C_{b}^{2}(\Rd)$,
\begin{align*}
 & \int_{\mathbb{R}^{d}\backslash\{0\}}\left[\varphi(x+h)-\varphi(x)-\mathbf{1}_{\{|h|\le1\}}h\cdot\nabla\varphi(x)\right]\frac{K(h)}{|h|^{d+\alpha}}\mathrm{d}h\\
 & \quad=\lim_{\varepsilon\to0}\int_{\{|h|>\varepsilon\}}\left[\varphi(x+h)-\varphi(x)\right]\frac{K(h)}{|h|^{d+\alpha}}\mathrm{d}h\\
 & \quad=\frac{1}{2}\int_{\mathbb{R}^{d}\backslash\{0\}}\delta_{\varphi}(x;h)\frac{K(h)}{|h|^{d+\alpha}}\mathrm{d}h.
\end{align*}

Similarly to Lemma \ref{cor:For-each-k}, the following result follows
from \cite[Theorem 3.4]{Kim2017}.
\begin{lem}
There exists  $C=C(d,\alpha_{1},\alpha_{2},\Lambda_{1},\Lambda_{2})>0$
such that 
\begin{equation}
\int_{\Rd}\left|\delta_{f_{t}^{(\alpha)}}(x;h)\right|\cdot|h|^{-d-\alpha}\mathrm{d}h\le C\rho_{\alpha}^{0,0}(t,x),\quad x\in\Rd,\ t>0,\label{esti:f_t,integral_differ-1}
\end{equation}
\textcolor{black}{uniformly for $\alpha\in[\alpha_{1},\alpha_{2}]$.}\label{lem:frac esti f_t-1}
\end{lem}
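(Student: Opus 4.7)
The plan is to follow the same scheme as the proof of Lemma~\ref{cor:For-each-k}, with \cite[Theorem~3.4]{Kim2017} playing the role that \cite[Proposition~3.2]{Kim2017} played there. In other words, first establish the desired estimate at the fixed time $t=1$, then propagate it to all $t>0$ by the stable-like rescaling $Y_{s}:=aZ_{a^{-\alpha}s}$.

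First I would check, exactly as in Lemma~\ref{cor:For-each-k}, that the symmetric kernel $K$ satisfies $\hat{\gamma}_{0}^{-1}j(h)\le K(h)\le\hat{\gamma}_{0}\,j(h)$ with $j(h)=C_{\alpha}^{-1}|h|^{-d-\alpha}$ and a uniform constant $\hat{\gamma}_{0}=\hat{\gamma}_{0}(d,\alpha_{1},\alpha_{2},\Lambda_{1},\Lambda_{2})$ (the continuity of $\alpha\mapsto C_{\alpha}$ on $[\alpha_{1},\alpha_{2}]$ is what makes $\hat{\gamma}_{0}$ uniform). Together with the lower scaling $\Phi(\lambda r)=\lambda^{\alpha}r^{\alpha}\ge\lambda^{\alpha_{1}}r^{\alpha}$ for $\lambda,r\ge1$, this places $Z$ into the hypotheses of \cite[Theorem~3.4]{Kim2017}, yielding
\[
\int_{\Rd}\bigl|\delta_{f_{1}^{(\alpha)}}(x;h)\bigr|\,|h|^{-d-\alpha}\,\mathrm{d}h\;\le\;C(1+|x|)^{-d-\alpha},\qquad x\in\Rd,
\]
with $C=C(d,\alpha_{1},\alpha_{2},\Lambda_{1},\Lambda_{2})$.

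Next I would upgrade from $t=1$ to arbitrary $t>0$ by the same change of process used in Lemma~\ref{cor:For-each-k}. Setting $a=t^{-1/\alpha}$ and $Y_{s}=aZ_{a^{-\alpha}s}$, the process $Y$ is again stable-like with jump kernel $M(h)=a^{-d-\alpha}K(a^{-1}h)$ obeying the same two-sided bound (\ref{condition1forK}) with the same constants, so the $t=1$ estimate of Step~1 applies to the density $\rho(x)=t^{d/\alpha}f_{t}^{(\alpha)}(t^{1/\alpha}x)$ of $Y_{1}$. Using $\delta_{\rho}(x;h)=t^{d/\alpha}\delta_{f_{t}^{(\alpha)}}(t^{1/\alpha}x;\,t^{1/\alpha}h)$ and changing variables $x'=t^{1/\alpha}x$, $h'=t^{1/\alpha}h$, both sides of the $t=1$ bound pick up a common factor $t^{(d+\alpha)/\alpha}$, and after cancellation one reads off
\[
\int_{\Rd}\bigl|\delta_{f_{t}^{(\alpha)}}(x';h')\bigr|\,|h'|^{-d-\alpha}\,\mathrm{d}h'\;\le\;C\,(t^{1/\alpha}+|x'|)^{-d-\alpha}\;=\;C\,\varrho_{\alpha}^{0,0}(t,x'),
\]
which is the desired conclusion.

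The essentially only delicate point is the uniformity in $\alpha$: one must verify that the constant provided by \cite[Theorem~3.4]{Kim2017} depends on $\alpha$ only through quantities bounded on the compact interval $[\alpha_{1},\alpha_{2}]$, namely the uniform comparability constant $\hat{\gamma}_{0}$ and the lower scaling exponent $\alpha_{1}$. Once this tracking of constants is checked (in complete analogy with the $\hat{\gamma}_{0}$ bookkeeping in the proof of Lemma~\ref{cor:For-each-k}), the scaling in the second step preserves everything, and the lemma follows.
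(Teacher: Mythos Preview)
Your proposal is correct and follows exactly the route the paper indicates: the paper does not spell out a proof but simply states that the lemma ``follows from \cite[Theorem 3.4]{Kim2017}'' in the same way Lemma~\ref{cor:For-each-k} followed from \cite[Proposition 3.2]{Kim2017}. Your verification of the hypotheses via the uniform constant $\hat{\gamma}_{0}$, application of \cite[Theorem~3.4]{Kim2017} at $t=1$, and the stable-like rescaling $Y_{s}=aZ_{a^{-\alpha}s}$ to pass to general $t>0$ is precisely that argument, carried out in full.
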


The following lemma is crucial for the estimates that we will establish
in the next section. To prove it we will need an inequality. Let $\gamma>0$
be a constant. According to \cite[p.~277, (2.9)]{chen2015heat}, it
holds that for $t>0$ and $x,z\in\Rd$ with $|z|\le(2t^{1/\alpha})\lor\left(|x|/2\right)$, 

\begin{equation}
\left(t^{1/\alpha}+|x+z|\right)^{-\gamma}\le4^{\gamma}\left(t^{1/\alpha}+|x|\right)^{-\gamma}.\label{ineq 1: chen-1}
\end{equation}
 
\begin{lem}
There exists $C=C(d,\alpha_{1},\alpha_{2},\Lambda_{1},\Lambda_{2})>0$
such that for $\tilde{\alpha}\in[\alpha_{1},\alpha_{2}]$, $t>0$
and $x\in\Rd$,\textcolor{red}{{} }
\begin{align}
 & \int_{\Rd}\left|\delta_{f_{t}^{(\alpha)}}(x;h)\right|\cdot\left||h|^{-d-\tilde{\alpha}}-|h|^{-d-\alpha}\right|\mathrm{d}h\nonumber \\
 & \quad\le C|\alpha-\tilde{\alpha}|\left(1+|\ln t|+\mathbf{1}_{\{|x|\ge2\}}\ln|x|\right)\left[t^{\left(\alpha-\tilde{\alpha}\right)/\alpha}\vee1\right]\rho_{\alpha}^{0,0}(t,x)\nonumber \\
 & \qquad+C|\alpha-\tilde{\alpha}|\cdot\mathbf{1}_{\{|x|\ge2\}}\ln\left(|x|\right)\rho_{\tilde{\alpha}}^{0,0}(t,x).\label{esti:f_t,integral_differ}
\end{align}
Moreover, the estimate in \emph{(\ref{esti:f_t,integral_differ})}
is uniform for $\alpha,\tilde{\alpha}\in[\alpha_{1},\alpha_{2}]$.
\label{lem:frac esti f_t}
\end{lem}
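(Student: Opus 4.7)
The plan is to split the integration in $h$ into a near region $\{|h|\le1\}$ and a far region $\{|h|>1\}$, combining in each the second-difference estimates on $f_t^{(\alpha)}$ with a pointwise comparison of the two kernels. The pointwise bound, obtained from the mean value theorem applied to $s\mapsto|h|^{-d-s}$, is
\[
\bigl||h|^{-d-\tilde\alpha}-|h|^{-d-\alpha}\bigr|\le|\alpha-\tilde\alpha|\cdot|\ln|h||\cdot|h|^{-d-\beta(h)}
\]
for some $\beta(h)\in[\alpha\wedge\tilde\alpha,\alpha\vee\tilde\alpha]$; equivalently, the weight is at most $|\alpha-\tilde\alpha||\ln|h|||h|^{-d-(\alpha\vee\tilde\alpha)}$ on $\{|h|\le1\}$ and at most $|\alpha-\tilde\alpha||\ln|h|||h|^{-d-(\alpha\wedge\tilde\alpha)}$ on $\{|h|>1\}$, so the switching exponent dictates the natural splitting.

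\emph{Near region.} Inserting this together with the second-difference estimate \eqref{esti: delta g_t} yields an integral of exactly the same shape as in the proof of Lemma \ref{lem:frac esti f_t-1} (that is, \cite[Theorem 3.4]{Kim2017}), with the only differences being that the kernel $|h|^{-d-\alpha}$ is replaced by $|h|^{-d-(\alpha\vee\tilde\alpha)}$ and that there is an extra weight $|\ln|h||$. Using that $\int_{|h|\le1}\bigl((t^{-2/\alpha}|h|^2)\wedge1\bigr)|h|^{-d-s}\,\mathrm{d}h\asymp t^{-s/\alpha}$ for $s\in[\alpha_1,\alpha_2]$ and $t\le1$, the shift of the kernel index from $\alpha$ to $\alpha\vee\tilde\alpha$ costs a multiplicative factor $t^{(\alpha-(\alpha\vee\tilde\alpha))/\alpha}=t^{(\alpha-\tilde\alpha)/\alpha}\vee1$, while the log weight costs at most an additive $1+|\ln t|$ by the mechanism of the second inequality of Lemma \ref{lem: conv ineq of chen}. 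Combining these gives $I_{\mathrm{near}}\le C|\alpha-\tilde\alpha|(1+|\ln t|)\bigl(t^{(\alpha-\tilde\alpha)/\alpha}\vee1\bigr)\rho_\alpha^{0,0}(t,x)$, which sits inside the first summand of the target.

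\emph{Far region.} On $\{|h|>1\}$ the cancellation in the second difference is no longer useful, so I use the triangle bound $|\delta_{f_t^{(\alpha)}}(x;h)|\le f_t^{(\alpha)}(x+h)+f_t^{(\alpha)}(x-h)+2f_t^{(\alpha)}(x)$. The diagonal term contributes a constant times $f_t^{(\alpha)}(x)\le C\rho_\alpha^{0,0}(t,x)$ by Lemma \ref{cor:For-each-k} and $t\le1$. For the translated pieces I change variables $\eta=x\pm h$, bound $f_t^{(\alpha)}(\eta)\le C\rho_\alpha^{0,\alpha}(t,\eta)$, and split the $\eta$-domain into $A=\{|\eta|\le|x|/2\}$ and its complement $B$. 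The set $A\cap\{|\eta-x|\ge1\}$ is non-empty only when $|x|\ge2$, which is exactly what produces the indicator $\mathbf{1}_{|x|\ge2}$. On $A$ one has $|\eta-x|\sim|x|$, so $|\eta-x|^{-d-(\alpha\wedge\tilde\alpha)}|\ln|\eta-x||\lesssim\ln|x|\cdot|x|^{-d-(\alpha\wedge\tilde\alpha)}\lesssim\ln|x|\cdot\rho_{\tilde\alpha}^{0,0}(t,x)$ (the last step valid for $|x|\ge2,t\le1$), and the residual integral of $\rho_\alpha^{0,\alpha}(t,\eta)$ is $O(1)$; this produces the $\mathbf{1}_{|x|\ge2}\ln|x|\,\rho_{\tilde\alpha}^{0,0}(t,x)$ term of the target. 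On $B$ one has $(t^{1/\alpha}+|\eta|)^{-d-\alpha}\lesssim\rho_\alpha^{0,0}(t,x)$, and the finiteness of $\int_{|\eta-x|\ge1}|\ln|\eta-x|||\eta-x|^{-d-(\alpha\wedge\tilde\alpha)}\,\mathrm{d}\eta$ yields a further contribution of $C\rho_\alpha^{0,0}(t,x)$.

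\emph{Main obstacle.} The principal difficulty lies in the far region: the ``wrong'' power $|h|^{-d-\tilde\alpha}$ of the kernel difference prevents any direct appeal to Lemma \ref{lem:frac esti f_t-1}, and forcing out the $\rho_{\tilde\alpha}^{0,0}$-summand---which must come from the large-jump regime $|\eta|\ll|x|$, where the density $f_t^{(\alpha)}(x+h)$ concentrates near its bulk while the jump kernel is of the ``wrong'' index $\tilde\alpha$---requires the explicit case analysis around $|\eta|\sim|x|/2$ sketched above. In contrast, the near-region estimate is a direct adaptation of Lemma \ref{lem:frac esti f_t-1}, the only genuinely new ingredient being the bookkeeping that turns the index shift $\alpha\to\alpha\vee\tilde\alpha$ into the factor $t^{(\alpha-\tilde\alpha)/\alpha}\vee1$.
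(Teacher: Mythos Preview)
Your split at $|h|=1$ (rather than the paper's scale-dependent split at $|h|=t^{1/\alpha}$) is a legitimate variant, and your far-region $A/B$ decomposition plays the same role as the paper's $I_{21},I_{22},I_{23}$ analysis. However, there are two concrete problems. First, you treat only $t\le1$, invoking this restriction explicitly in both regions; the lemma is stated for all $t>0$, and the large-time range is genuinely needed later (the bounds \eqref{eq: esti, F_1}--\eqref{eq: esti, F_2} feed into Lemma~\ref{lem: esti delta >1}, where $t\in[\delta,\infty)$). The paper devotes a separate case~(ii) to $t>1$, where the scaling radius $t^{1/\alpha}$ exceeds $1$ and the bookkeeping is structurally different; you have simply omitted this half of the proof.

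Second, your key inequality on the set $A$, namely $|x|^{-d-(\alpha\wedge\tilde\alpha)}\lesssim\rho_{\tilde\alpha}^{0,0}(t,x)$ for $|x|\ge2$ and $t\le1$, fails when $\tilde\alpha>\alpha$: then $\alpha\wedge\tilde\alpha=\alpha$, and the inequality would force $|x|^{\tilde\alpha-\alpha}\lesssim1$ uniformly, which is false as $|x|\to\infty$. The correct bound is $|x|^{-d-(\alpha\wedge\tilde\alpha)}\lesssim\rho_{\alpha}^{0,0}(t,x)+\rho_{\tilde\alpha}^{0,0}(t,x)$, with the $\rho_\alpha^{0,0}$ piece absorbed into the first summand of the target via its $\mathbf{1}_{\{|x|\ge2\}}\ln|x|$ factor; this is consistent with the paper's closing remark that the $\rho_{\tilde\alpha}^{0,0}$ term is indispensable only for $\tilde\alpha<\alpha$. (A smaller slip: $A\cap\{|\eta-x|>1\}$ is already nonempty for $|x|>2/3$, not only $|x|\ge2$, though in the intermediate range all quantities are bounded and the conclusion survives.) Finally, your near-region paragraph justifies only the diagonal contribution $\rho_\alpha^{0,\alpha}(t,x)$ from \eqref{esti: delta g_t}; the off-diagonal piece $\rho_\alpha^{0,\alpha}(t,x\pm h)$ on $\{t^{1/\alpha}<|h|\le1\}$ still requires the same case analysis on $|x|$ versus $t^{1/\alpha}$ that the paper carries out for $I_{22}$, and ``same shape as Lemma~\ref{lem:frac esti f_t-1}'' does not by itself show that the extra $|\ln|h||$ weight and the shifted exponent propagate to exactly the factors you claim.
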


\begin{proof}
Our proof is adapted from that of \cite[Theorem 2.4]{chen2015heat}
and we will also use some ideas from \cite[Proposition 4.7]{tang2007uniqueness}.
By (\ref{esti: delta g_t}), we get 
\begin{align}
 & \int_{\mathbb{R}^{d}}|\delta_{f_{t}^{(\alpha)}}(x;h)|\cdot\left||h|^{-d-\tilde{\alpha}}-|h|^{-d-\alpha}\right|\mathrm{d}h\nonumber \\
 & \quad\le c_{1}\Big(t^{-\frac{2}{\alpha}}\int_{|h|\leq t^{1/\alpha}}\rho_{\alpha}^{0,\alpha}(t,x\pm h)|h|^{2}\cdot\left||h|^{-d-\tilde{\alpha}}-|h|^{-d-\alpha}\right|\mathrm{d}h\nonumber \\
 & \qquad+\int_{|h|>t^{1/\alpha}}\rho_{\alpha}^{0,\alpha}(t,x\pm h)\left||h|^{-d-\tilde{\alpha}}-|h|^{-d-\alpha}\right|\mathrm{d}h\nonumber \\
 & \qquad+\rho_{\alpha}^{0,\alpha}(t,x)\int_{\mathbb{R}^{d}}((t^{-\frac{2}{\alpha}}|h|^{2})\wedge1)\left||h|^{-d-\tilde{\alpha}}-|h|^{-d-\alpha}\right|\mathrm{d}h\Big)\nonumber \\
 & \quad=:c_{1}(I_{1}+I_{2}+I_{3}).\label{eq: decom into I1,I2,I3}
\end{align}

Here we only consider $\tilde{\alpha}\ge\alpha$, since the case for
$\tilde{\alpha}<\alpha$ is similar and simpler. For $|h|\neq0$,
by mean value theorem, there exists $\theta\in[0,1]$ such that 
\[
|1-|h|^{\tilde{\alpha}-\alpha}|\le|h|^{\theta(\tilde{\alpha}-\alpha)}|\alpha-\tilde{\alpha}|\cdot|\ln|h||.
\]
It follows that 
\begin{equation}
|1-|h|^{\tilde{\alpha}-\alpha}|\le R^{\tilde{\alpha}-\alpha}|\alpha-\tilde{\alpha}|\cdot|\ln|h||,\quad0<|h|\le R,\label{eq: ln h, h<R}
\end{equation}
where $R\ge1$. In particular,
\begin{equation}
|1-|h|^{\tilde{\alpha}-\alpha}|\le|\alpha-\tilde{\alpha}|\cdot|\ln|h||,\quad0<|h|\le1.\label{eq: ln h, h<1}
\end{equation}
Similarly, if $|h|>\delta$ with $\delta\in(0,1]$, then 
\begin{align}
|1-|h|^{\alpha-\tilde{\alpha}}| & \le\delta^{\theta(\alpha-\tilde{\alpha})}\left|\frac{h}{\delta}\right|^{\theta(\alpha-\tilde{\alpha})}|\alpha-\tilde{\alpha}|\cdot|\ln|h||\nonumber \\
 & \le\delta^{(\alpha-\tilde{\alpha})}|\alpha-\tilde{\alpha}|\cdot|\ln|h||.\label{eq: ln h, h>delta}
\end{align}
As a special case, we have
\begin{equation}
|1-|h|^{\alpha-\tilde{\alpha}}|\le|\alpha-\tilde{\alpha}|\cdot|\ln|h||,\quad|h|>1.\label{eq: lnh, h>1}
\end{equation}
In the following we treat the cases $t\le1$ and $t>1$ separately. 

(i) Assume $0<t\le1$. Then
\begin{align}
 & \int_{|h|>t^{1/\alpha}}\left||h|^{-d-\tilde{\alpha}}-|h|^{-d-\alpha}\right|\mathrm{d}h\nonumber \\
 & \overset{(\ref{eq: ln h, h<1}),(\ref{eq: lnh, h>1})}{\le}|\alpha-\tilde{\alpha}|\left(\int_{t^{1/\alpha}<|h|\le1}|h|^{-d-\tilde{\alpha}}|\ln|h||\mathrm{d}h+\int_{|h|>1}|h|^{-d-\alpha}|\ln|h||\mathrm{d}h\right)\nonumber \\
 & \qquad\le c_{2}|\alpha-\tilde{\alpha}|\left(1+|\ln t|\right)t^{-\tilde{\alpha}/\alpha}+c_{2}|\alpha-\tilde{\alpha}|\nonumber \\
 & \qquad\le2c_{2}|\alpha-\tilde{\alpha}|\left(1+|\ln t|\right)t^{-\tilde{\alpha}/\alpha}.\label{eq1: lemma 2.9}
\end{align}
For $I_{1}$,  we have 
\begin{align*}
I_{1} & \overset{(\ref{eq: ln h, h<1})}{\le}t^{1-\frac{2}{\alpha}}\int_{|h|\leq t^{1/\alpha}}(t^{1/\alpha}+|x\pm h|)^{-d-\alpha}|h|^{2-d-\tilde{\alpha}}|\alpha-\tilde{\alpha}|\cdot|\ln|h||\mathrm{d}h\\
 & \overset{(\ref{ineq 1: chen-1})}{\le}c_{3}|\alpha-\tilde{\alpha}|t^{1-\frac{2}{\alpha}}(t^{1/\alpha}+|x|)^{-d-\alpha}\int_{|h|\leq t^{1/\alpha}}|h|^{2-d-\tilde{\alpha}}|\ln|h||\mathrm{d}h\\
 & \quad\le c_{4}|\alpha-\tilde{\alpha}|\left(1+|\ln t|\right)\rho_{\alpha}^{0,\alpha-\tilde{\alpha}}(t,x).
\end{align*}
For $I_{2}$, we need to consider 2 cases: 

(a) If $|x|\leq2t^{1/\alpha}$, then
\begin{align*}
I_{2} & \le t^{-d/\alpha}\int_{|h|>t^{1/\alpha}}\left||h|^{-d-\tilde{\alpha}}-|h|^{-d-\alpha}\right|\mathrm{d}h\\
 & \overset{(\ref{eq1: lemma 2.9})}{\le}c_{5}|\alpha-\tilde{\alpha}|\left(1+|\ln t|\right)\rho_{\alpha}^{0,\alpha-\tilde{\alpha}}(t,x).
\end{align*}

(b) If $2t^{1/\alpha}<|x|$, we break up $I_{2}$ into three parts:

\begin{align}
I_{2} & {\displaystyle =\Big(\int_{t^{1/\alpha}<|h|\le\frac{|x|}{2}}+\int_{\frac{|x|}{2}<|h|<\frac{3|x|}{2}}}\nonumber \\
 & \qquad+\int_{|h|>\frac{3|x|}{2}}\Big)\rho_{\alpha}^{0,\alpha}(t,x\pm h)\left||h|^{-d-\tilde{\alpha}}-|h|^{-d-\alpha}\right|\mathrm{d}h\nonumber \\
 & =:I_{21}+I_{22}+I_{23.}\label{eq: break I_2}
\end{align}
We have
\begin{align*}
I_{21} & \overset{(\ref{ineq 1: chen-1})}{\le}c_{6}t(t^{1/\alpha}+|x|)^{-d-\alpha}\int_{|h|>t^{1/\alpha}}\left||h|^{-d-\tilde{\alpha}}-|h|^{-d-\alpha}\right|\mathrm{d}h\\
 & \overset{(\ref{eq1: lemma 2.9})}{\le}c_{7}|\alpha-\tilde{\alpha}|\left(1+|\ln t|\right)\rho_{\alpha}^{0,\alpha-\tilde{\alpha}}(t,x).
\end{align*}
For $I_{22}$, if $|x|<2$, then
\begin{align*}
I_{22} & \overset{(\ref{eq: ln h, h<R})}{\le}c_{8}|x|^{-d-\tilde{\alpha}}|\alpha-\tilde{\alpha}|\left(1+|\ln|x||\right)\int_{\frac{|x|}{2}<|h|<\frac{3|x|}{2}}\rho_{\alpha}^{0,\alpha}(t,x\pm h)\mathrm{d}h\\
 & \overset{(\ref{esti1:rho})}{\le}c_{9}|x|^{-d-\tilde{\alpha}}|\alpha-\tilde{\alpha}|\left(1+|\ln|x||\right)\\
 & \ \le c_{10}|x|^{-d-\alpha}|x|^{\alpha-\tilde{\alpha}}|\alpha-\tilde{\alpha}|\left(1+|\ln t|\right)\\
 & \ \le c_{11}|x|^{-d-\alpha}t^{\left(\alpha-\tilde{\alpha}\right)/\alpha}|\alpha-\tilde{\alpha}|\left(1+|\ln t|\right)\\
 & \ \le c_{12}|\alpha-\tilde{\alpha}|\left(1+|\ln t|\right)\rho_{\alpha}^{0,\alpha-\tilde{\alpha}}(t,x);
\end{align*}
if $|x|\ge2$, then 
\begin{align*}
I_{22} & \overset{(\ref{eq: lnh, h>1})}{\le}c_{13}|x|^{-d-\alpha}|\alpha-\tilde{\alpha}|\left(1+\ln|x|\right)\int_{\frac{|x|}{2}\le|h|\le\frac{3|x|}{2}}\rho_{\alpha}^{0,\alpha}(t,x\pm h)\mathrm{d}h\\
 & \overset{(\ref{esti1:rho})}{\le}c_{14}|x|^{-d-\alpha}|\alpha-\tilde{\alpha}|\left(1+\ln|x|\right)\\
 & \ \le c_{15}|\alpha-\tilde{\alpha}|\left(1+\ln|x|\right)\rho_{\alpha}^{0,\alpha-\tilde{\alpha}}(t,x)\\
 & \ \le c_{15}|\alpha-\tilde{\alpha}|\left(1+\mathbf{1}_{\{|x|\ge2\}}\ln|x|\right)\rho_{\alpha}^{0,\alpha-\tilde{\alpha}}(t,x).
\end{align*}

Note that when $|h|>3|x|/2$, we have $|x\pm h|>{\displaystyle |x|/2>t^{1/\alpha}}$.
So

\begin{align*}
I_{23} & \le\int_{|h|>\frac{3|x|}{2}}t|x\pm h|{}^{-d-\alpha}\left||h|^{-d-\tilde{\alpha}}-|h|^{-d-\alpha}\right|\mathrm{d}h\\
 & \le c_{16}t|x|{}^{-d-\alpha}\int_{|h|>\frac{3|x|}{2}}\left||h|^{-d-\tilde{\alpha}}-|h|^{-d-\alpha}\right|\mathrm{d}h\\
 & \le c_{16}t|x|^{-d-\alpha}\int_{|h|>t^{1/\alpha}}\left||h|^{-d-\tilde{\alpha}}-|h|^{-d-\alpha}\right|\mathrm{d}h\\
 & \overset{(\ref{eq1: lemma 2.9})}{\le}c_{17}|\alpha-\tilde{\alpha}|\left(1+|\ln t|\right)\rho_{\alpha}^{0,\alpha-\tilde{\alpha}}(t,x).
\end{align*}

We now turn to the integral $I_{3}$. We have 
\begin{align*}
I_{3} & =\rho_{\alpha}^{0,\alpha}(t,x)\int_{|h|\le t^{1/\alpha}}t^{-\frac{2}{\alpha}}|h|^{2}\cdot\left||h|^{-d-\tilde{\alpha}}-|h|^{-d-\alpha}\right|\mathrm{d}h\\
 & \qquad+\rho_{\alpha}^{0,\alpha}(t,x)\int_{|h|>t^{1/\alpha}}\left||h|^{-d-\tilde{\alpha}}-|h|^{-d-\alpha}\right|\mathrm{d}h\\
 & \overset{(\ref{eq: ln h, h<1}),(\ref{eq1: lemma 2.9})}{\le}|\alpha-\tilde{\alpha}|t^{-\frac{2}{\alpha}}\rho_{\alpha}^{0,\alpha}(t,x)\int_{|h|\le t^{1/\alpha}}|h|^{2-d-\tilde{\alpha}}|\ln|h||\mathrm{d}h\\
 & \qquad+c_{18}|\alpha-\tilde{\alpha}|\left(1+|\ln t|\right)t^{-\tilde{\alpha}/\alpha}\rho_{\alpha}^{0,\alpha}(t,x)\\
 & \quad\le c_{19}|\alpha-\tilde{\alpha}|\left(1+|\ln t|\right)\rho_{\alpha}^{0,\alpha-\tilde{\alpha}}(t,x).
\end{align*}

By (\ref{eq: decom into I1,I2,I3}) and the above estimates we obtained
for $I_{1}$, $I_{2}$ and $I_{3}$, we see that (\ref{esti:f_t,integral_differ})
is true if $0<t\le1$.

(ii) Assume $t>1$. \textcolor{black}{In this case, note that 
\begin{equation}
t^{1-2/\alpha}\le1.\label{eq: note t>1}
\end{equation}
}For $I_{1}$,  we can apply \eqref{eq: ln h, h<R} with $R=t^{1/\alpha}$
to get 
\begin{align*}
I_{1} & \le t^{1-\frac{2}{\alpha}}\int_{|h|\leq t^{1/\alpha}}(t^{1/\alpha}+|x\pm h|)^{-d-\alpha}t^{(\tilde{\alpha}-\alpha)/\alpha}|h|^{2-d-\tilde{\alpha}}|\alpha-\tilde{\alpha}|\cdot|\ln|h||\mathrm{d}h\\
 & \overset{(\ref{ineq 1: chen-1})}{\le}c_{20}|\alpha-\tilde{\alpha}|t^{(\tilde{\alpha}-2)/\alpha}(t^{1/\alpha}+|x|)^{-d-\alpha}\int_{|h|\leq t^{1/\alpha}}|h|^{2-d-\tilde{\alpha}}|\ln|h||\mathrm{d}h\\
 & \ \le c_{21}|\alpha-\tilde{\alpha}|(1+\ln t)\rho_{\alpha}^{0,0}(t,x).
\end{align*}

For $I_{2}$, if $|x|\leq2t^{1/\alpha}$, then
\begin{align*}
I_{2} & \overset{(\ref{eq: lnh, h>1})}{\le}|\alpha-\tilde{\alpha}|t^{-d/\alpha}\int_{|h|>t^{1/\alpha}}|h|^{-d-\alpha}|\ln|h||\mathrm{d}h\\
 & \ \le c_{22}|\alpha-\tilde{\alpha}|t^{-d/\alpha}t^{-1}(1+\ln t)\le c_{23}|\alpha-\tilde{\alpha}|(1+\ln t)\rho_{\alpha}^{0,0}(t,x);
\end{align*}
if $|x|>2t^{1/\alpha}$, by breaking $I_{2}$ into $I_{21}$, $I_{22}$
and $I_{23}$ as in (\ref{eq: break I_2}), we can argue similarly
as in (i) to get 
\begin{align*}
I_{21}+I_{23} & \le c_{24}t|x|^{-d-\alpha}\int_{|h|>t^{1/\alpha}}\left||h|^{-d-\tilde{\alpha}}-|h|^{-d-\alpha}\right|\mathrm{d}h\\
 & \overset{(\ref{eq: lnh, h>1})}{\le}c_{25}|\alpha-\tilde{\alpha}|\rho_{\alpha}^{0,\alpha}(t,x)\int_{|h|>t^{1/\alpha}}|h|^{-d-\alpha}|\ln|h||\mathrm{d}h\\
 & \ \le c_{26}|\alpha-\tilde{\alpha}|\left(1+\ln t\right)\rho_{\alpha}^{0,0}(t,x)
\end{align*}
and  
\begin{align*}
I_{22} & \overset{(\ref{eq: lnh, h>1})}{\le}c_{27}|x|^{-d-\alpha}|\alpha-\tilde{\alpha}|\left(1+\ln|x|\right)\int_{\frac{|x|}{2}<|h|<\frac{3|x|}{2}}\rho_{\alpha}^{0,\alpha}(t,x\pm h)\mathrm{d}h\\
 & \overset{(\ref{esti1:rho})}{\le}c_{28}|\alpha-\tilde{\alpha}|\left(1+\ln|x|\right)\rho_{\alpha}^{0,0}(t,x)\\
 & \ \le c_{28}|\alpha-\tilde{\alpha}|\left(1+\mathbf{1}_{\{|x|\ge2\}}\ln|x|\right)\rho_{\alpha}^{0,0}(t,x).
\end{align*}

For $I_{3}$, we have 
\begin{align*}
I_{3} & \overset{(\ref{eq: ln h, h<1}),(\ref{eq: lnh, h>1})}{\le}|\alpha-\tilde{\alpha}|t^{-\frac{2}{\alpha}}\rho_{\alpha}^{0,\alpha}(t,x)\int_{|h|\le1}|h|^{2}\cdot|h|^{-d-\tilde{\alpha}}|\ln|h||\mathrm{d}h\\
 & \qquad\quad+|\alpha-\tilde{\alpha}|t^{-\frac{2}{\alpha}}\rho_{\alpha}^{0,\alpha}(t,x)\int_{1<|h|\le t^{1/\alpha}}|h|^{2}\cdot|h|^{-d-\alpha}|\ln|h||\mathrm{d}h\\
 & \qquad\quad+|\alpha-\tilde{\alpha}|\rho_{\alpha}^{0,\alpha}(t,x)\int_{|h|>t^{1/\alpha}}|h|^{-d-\alpha}|\ln|h||\mathrm{d}h\\
 & \qquad\overset{(\ref{eq: note t>1})}{\le}c_{29}|\alpha-\tilde{\alpha}|(1+\ln t)\rho_{\alpha}^{0,0}(t,x).
\end{align*}

Summing up the above estimates, we get 
\[
I_{1}+I_{2}+I_{3}\le c_{30}|\alpha-\tilde{\alpha}|\left(1+|\ln t|+\mathbf{1}_{\{|x|\ge2\}}\ln|x|\right)\rho_{\alpha}^{0,0}(t,x),
\]
which implies (\ref{esti:f_t,integral_differ}) given $t>1$. 

Finally, we would like to add one remark. As seen in the above proof,
if $\tilde{\alpha}\ge\alpha$, the second term on the right-hand of
(\ref{esti:f_t,integral_differ}) is actually not needed. However,
for the case $\tilde{\alpha}<\alpha,$ this term becomes indispensable
when we estimate $I_{2}$ for $|x|\ge2$. 

The lemma is proved.

\end{proof}

\section{Some estimates}

\textit{\emph{Let $\mathcal{A}$ be defined as in }}(\ref{eq: defi A})\textit{.}
For the remainder of this paper, we always assume that the functions
$n:\Rd\times\Rd\to(0,\infty)$ and $\alpha:\Rd\to(0,2)$ satisfy Assumption\textit{\emph{
}}\ref{assu: main}. 

Instead of $\mathcal{A}$, we first consider the operator $\mathcal{A}^{y}$
obtained by ``freezing'' the coefficient of $\mathcal{A}$ at $y\in\Rd$,
i.e., 
\[
\mathcal{A}^{y}f(x):=\int_{\mathbb{R}^{d}\backslash\{0\}}\left[f(x+h)-f(x)-\mathbf{1}_{\{|h|\le1\}}h\cdot\nabla f(x)\right]\frac{n(y,h)}{|h|^{d+\alpha(y)}}\mathrm{d}h.
\]
Then $\mathcal{A}^{y}$ is clearly the generator of a Lévy process
$\left(Z_{t}^{y}\right)_{t\ge0}$ with the characteristic exponent
\[
\psi^{y}(u)=-\int_{\mathbb{R}^{d}\setminus\{0\}}\Big(e^{iu\cdot h}-1-\mathbf{1}_{\{|h|\le1\}}iu\cdot h\Big)\frac{n(y,h)}{|h|^{d+\alpha(y)}}\mathrm{d}h.
\]
Let $f_{t}^{y}(\cdot)$ be the density function of $Z_{t}^{y}$, i.e.,
\[
f_{t}^{y}(x):=\frac{1}{(2\pi)^{d}}\int_{\mathbb{R}^{d}}e^{-iu\cdot x}e^{-t\psi^{y}(u)}\mathrm{d}u,\quad x\in\mathbb{R}^{d},\ t>0.
\]

The following lemma extends an identity in \cite[p.~9]{ChenZhang2017},
where the constant order stable-like process was considered.
\begin{lem}
It holds that for all $x,y,w\in\Rd$ and $t>0$,
\begin{align}
f_{t}^{y}(w)-f_{t}^{x}(w) & =\int_{0}^{t/2}\int_{\mathbb{R}^{d}}f_{s}^{y}(z)(\mathcal{A}^{y}-\mathcal{A}^{x})\left(f_{t-s}^{x}(w-\cdot)\right)(z)\mathrm{d}z\mathrm{d}s\nonumber \\
 & \qquad+\int_{t/2}^{t}\int_{\mathbb{R}^{d}}f_{t-s}^{x}(z)(\mathcal{A}^{y}-\mathcal{A}^{x})\left(f_{s}^{y}(w-\cdot)\right)(z)\mathrm{d}z\mathrm{d}s.\label{Prop 1, eq 1-1-1-1}
\end{align}
\end{lem}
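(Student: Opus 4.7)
The identity \eqref{Prop 1, eq 1-1-1-1} is a Duhamel-type decomposition built from three ingredients, valid for each fixed $y \in \mathbb{R}^d$: the Chapman--Kolmogorov relation $f_t^y = f_s^y * f_{t-s}^y$ for $0<s<t$; the forward equation $\partial_t f_t^y(x) = \mathcal{A}^y f_t^y(x)$, obtained by differentiating the Fourier representation and using Lemma~\ref{lem:frac esti f_t-1} to exchange integrals; and the symmetry $f_t^y(x)=f_t^y(-x)$, which follows from $n(y,\cdot)$ being symmetric. My plan is to produce two Duhamel identities, one on $[0,t/2]$ and one on $[t/2,t]$, and then add them. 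The choice of splitting point $t/2$ is made so that in each piece the time parameter of the kernel being differentiated stays bounded below by $t/2$.

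\emph{Step 1.} For $s\in[0,t/2]$, set $G(s):=(f_s^y * f_{t-s}^x)(w)$. Since $\mathcal{A}^y$ and $\mathcal{A}^x$ are translation-invariant operators, they commute with convolution against an $L^1$ function, so using the forward equation
\[
G'(s) = (\mathcal{A}^y f_s^y)*f_{t-s}^x(w) - f_s^y*(\mathcal{A}^x f_{t-s}^x)(w) = f_s^y * [(\mathcal{A}^y - \mathcal{A}^x) f_{t-s}^x](w).
\]
Integrating from $0$ to $t/2$, using $G(0)=f_t^x(w)$ and $G(t/2)=(f_{t/2}^y*f_{t/2}^x)(w)$, and rewriting the convolution via translation invariance together with the symmetry of $f_{t-s}^x$ yields
\[
(f_{t/2}^y * f_{t/2}^x)(w) - f_t^x(w) = \int_0^{t/2}\!\!\int_{\mathbb{R}^d} f_s^y(z)(\mathcal{A}^y - \mathcal{A}^x)(f_{t-s}^x(w-\cdot))(z)\,\mathrm dz\,\mathrm ds.
\]

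\emph{Step 2.} For $r\in[0,t/2]$, set $\tilde{H}(r):=(f_r^x * f_{t-r}^y)(w)$, so that $\tilde{H}(0)=f_t^y(w)$ and $\tilde{H}(t/2)=(f_{t/2}^x*f_{t/2}^y)(w)$. An analogous computation gives $-\tilde{H}'(r) = f_r^x*[(\mathcal{A}^y-\mathcal{A}^x) f_{t-r}^y](w)$, and integrating from $0$ to $t/2$ produces
\[
f_t^y(w) - (f_{t/2}^x*f_{t/2}^y)(w) = \int_0^{t/2}\!\!\int_{\mathbb{R}^d} f_r^x(z)(\mathcal{A}^y - \mathcal{A}^x)(f_{t-r}^y(w-\cdot))(z)\,\mathrm dz\,\mathrm dr.
\]
The substitution $s=t-r$ converts the right-hand side to $\int_{t/2}^t\int f_{t-s}^x(z)(\mathcal{A}^y-\mathcal{A}^x)(f_s^y(w-\cdot))(z)\,\mathrm dz\,\mathrm ds$. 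Adding the two identities and cancelling $f_{t/2}^y*f_{t/2}^x = f_{t/2}^x*f_{t/2}^y$ leaves $f_t^y(w)-f_t^x(w)$ on the left and the right-hand side of \eqref{Prop 1, eq 1-1-1-1} on the right.

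The only real obstacle is technical: justifying Fubini's theorem, differentiation under the integral in $s$, and the commutation of $\mathcal{A}^y$ with convolution. Each reduces to the integrability estimates already supplied in Section~2: Lemma~\ref{cor:For-each-k} bounds $f_t^y$ and its gradients, while Lemma~\ref{lem:frac esti f_t-1} gives the uniform bound $\int|\delta_{f_t^y}(x;h)|\,|h|^{-d-\alpha(y)}\mathrm dh \le C\rho_{\alpha(y)}^{0,0}(t,x)$. The latter makes $\mathcal{A}^y f_t^y$ pointwise well-defined and integrable, and (together with the fact that $f_s^y$ is a probability density) controls the integrand in the double integral, since on each of the two intervals the function to which $\mathcal{A}^y-\mathcal{A}^x$ is applied carries a time parameter at least $t/2$.
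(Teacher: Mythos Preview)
Your argument is correct and follows the same Duhamel structure as the paper: both produce the intermediate cross term $(f_{t/2}^{y}*f_{t/2}^{x})(w)$ and cancel it by adding the two half-interval pieces. The only difference is computational: the paper carries out each half-interval calculation entirely on the Fourier side, writing $(\mathcal{A}^{y}-\mathcal{A}^{x})f_{s}^{y}$ as $-(2\pi)^{-d}\int(\psi^{y}-\psi^{x})e^{-s\psi^{y}}e^{-iu\cdot(\,\cdot\,)}\mathrm du$, integrating first in $z$ (which produces $e^{-(t-s)\psi^{x}}$) and then in $s$ as an explicit antiderivative, which sidesteps the endpoint $s\downarrow 0$ altogether. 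Your physical-space version via $G(s)=(f_{s}^{y}*f_{t-s}^{x})(w)$ is the inverse-Fourier transform of the same computation; it is slightly more conceptual but requires you to justify $G(s)\to f_{t}^{x}(w)$ as $s\downarrow0$ and differentiation under the integral, which you correctly flag and for which Lemmas~\ref{cor:For-each-k} and~\ref{lem:frac esti f_t-1} suffice.
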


\begin{proof}
By Fubini, we have 
\begin{align}
 & \int_{t/2}^{t}\int_{\mathbb{R}^{d}}f_{t-s}^{x}(z)(\mathcal{A}^{y}-\mathcal{A}^{x})\left(f_{s}^{y}(w-\cdot)\right)(z)\mathrm{d}z\mathrm{d}s\label{Prop1, eq 0.5}\\
 & \quad=-\frac{1}{(2\pi)^{d}}\int_{t/2}^{t}\int_{\mathbb{R}^{d}}f_{t-s}^{x}(z)\left(\int_{\Rd}\left(\psi^{y}(u)-\psi^{x}(u)\right)e^{-s\psi^{y}(u)}e^{-iu\cdot(w-z)}\mathrm{d}u\right)\mathrm{d}z\mathrm{d}s\nonumber \\
 & \quad=-\frac{1}{(2\pi)^{d}}\int_{t/2}^{t}\int_{\mathbb{R}^{d}}\left(\psi^{y}(u)-\psi^{x}(u)\right)e^{-s\psi^{y}(u)-iu\cdot w}e^{-(t-s)\psi^{x}(u)}\mathrm{d}u\mathrm{d}s\nonumber \\
 & \quad=\frac{1}{(2\pi)^{d}}\int_{\mathbb{R}^{d}}e^{-iu\cdot w-t\psi^{x}(u)}\left(e^{-t\psi^{y}(u)}e^{t\psi^{x}(u)}-e^{-t\psi^{y}(u)/2}e^{t\psi^{x}(u)/2}\right)\mathrm{d}u\nonumber \\
 & \quad=f_{t}^{y}(w)-\frac{1}{(2\pi)^{d}}\int_{\mathbb{R}^{d}}e^{-iu\cdot w-t\psi^{x}(u)/2}e^{-t\psi^{y}(u)/2}\mathrm{d}u.\label{Prop 1, eq 1}
\end{align}
By the change of variables $s':=t-s$ and interchanging the roles
of $y$ and $x$ in (\ref{Prop1, eq 0.5}), we obtain
\begin{align*}
 & \int_{0}^{t/2}\int_{\mathbb{R}^{d}}f_{s}^{y}(z)(\mathcal{A}^{x}-\mathcal{A}^{y})\left(f_{t-s}^{x}(w-\cdot)\right)(z)\mathrm{d}z\mathrm{d}s\\
 & \quad=f_{t}^{x}(w)-\frac{1}{(2\pi)^{d}}\int_{\mathbb{R}^{d}}e^{-iu\cdot w-t\psi^{y}(u)/2}e^{-t\psi^{x}(u)/2}\mathrm{d}u,
\end{align*}
which, together with (\ref{Prop 1, eq 1}), implies (\ref{Prop 1, eq 1-1-1-1}). 
\end{proof}
\begin{lem}
\label{lem: esti f^y-f^x} There exists $C=C(d,\underline{\alpha},\overline{\alpha},\kappa_{1},\kappa_{2})>0$
such that for all $t\in(0,1/2]$, ${\color{red}{\normalcolor x,y\in\Rd}}$
and $w\in\Rd$ with $0<|w|\le1$,
\begin{align*}
|f_{t}^{y}(w)-f_{t}^{x}(w)| & \le C\left(t^{1-|\alpha(x)-\alpha(y)|/\underline{\alpha}}|\ln t|\beta(|x-y|)+t\psi(|x-y|)\right)\\
 & \quad\times\exp\left(|\alpha(x)-\alpha(y)|\cdot|\ln|w||\right)\cdot\left\{ \rho_{\alpha(x)}^{0,0}(t,w)+\rho_{\alpha(y)}^{0,0}(t,w)\right\} ,
\end{align*}
where $\beta$ and $\psi$ are defined in the same way as in\emph{
}Assumption \emph{\ref{assu: main}}. 
\end{lem}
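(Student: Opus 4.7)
The plan is to apply the identity \eqref{Prop 1, eq 1-1-1-1} to write $f_t^y(w) - f_t^x(w) = I_1 + I_2$ with $I_1, I_2$ the two double integrals, and to bound $(\mathcal{A}^y - \mathcal{A}^x)$ acting on each frozen Lévy density by means of the decomposition
\[
\frac{n(y,h)}{|h|^{d+\alpha(y)}} - \frac{n(x,h)}{|h|^{d+\alpha(x)}} = \frac{n(y,h) - n(x,h)}{|h|^{d+\alpha(y)}} + n(x,h)\Bigl(\frac{1}{|h|^{d+\alpha(y)}} - \frac{1}{|h|^{d+\alpha(x)}}\Bigr).
\]
The first summand is controlled by $\psi(|x-y|)$ via Assumption \ref{assu: main}(b) and is the source of the $t\psi(|x-y|)$ term; the second, controlled by $|\alpha(x)-\alpha(y)|\le\beta(|x-y|)$, produces the $t^{1-|\alpha(x)-\alpha(y)|/\underline{\alpha}}|\ln t|\beta(|x-y|)$ term.

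For the integrand of $I_1$, I write the operator difference in symmetric form $\tfrac12\int\delta_{f_{t-s}^x}(w-z;h)[\cdots]\mathrm{d}h$. For the $n$-difference part I add and subtract $|h|^{-d-\alpha(x)}$ in the inner kernel and apply Lemma \ref{lem:frac esti f_t-1} (to the $|h|^{-d-\alpha(x)}$ piece) together with Lemma \ref{lem:frac esti f_t} (to the remainder). For the $\alpha$-difference part I apply Lemma \ref{lem:frac esti f_t} directly with $\alpha=\alpha(x)$, $\tilde\alpha=\alpha(y)$, using $t-s\le 1$ to replace $(t-s)^{(\alpha(x)-\alpha(y))/\alpha(x)}\vee 1$ by $(t-s)^{-|\alpha(x)-\alpha(y)|/\underline{\alpha}}$. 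This yields a pointwise bound of the schematic form
\[
C\Bigl[\psi(|x-y|)\rho_{\alpha(x)}^{0,0}(t-s,w-z) + \beta(|x-y|)\Phi(t-s,w-z)\bigl(\rho_{\alpha(x)}^{0,0} + \rho_{\alpha(y)}^{0,0}\bigr)(t-s,w-z)\Bigr],
\]
where $\Phi$ collects the factors $1 + |\ln(t-s)| + \mathbf{1}_{\{|w-z|\ge 2\}}\ln|w-z|$ and $(t-s)^{-|\alpha(x)-\alpha(y)|/\underline{\alpha}}$.

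I would then integrate this against $f_s^y(z)\le C\rho_{\alpha(y)}^{0,\alpha(y)}(s,z)$ (Lemma \ref{cor:For-each-k} with $k=0$). Using $\rho_\alpha^{0,0}(t-s,\cdot) = (t-s)^{-1}\rho_\alpha^{0,\alpha}(t-s,\cdot)$, the spatial convolutions fall into Lemma \ref{lem 1: conv different alpha} (for terms without $\ln|w-z|$) and inequality \eqref{eq: Lemma 2.4} (for the $\mathbf{1}_{\{|w-z|\ge 2\}}\ln|w-z|$ piece); both supply the factor $\exp(|\alpha(x)-\alpha(y)|\cdot|\ln|w||)$ and collapse the product into $\rho_{\alpha(x)}^{0,\alpha(x)}(t,w) + \rho_{\alpha(y)}^{0,\alpha(y)}(t,w) = t(\rho_{\alpha(x)}^{0,0} + \rho_{\alpha(y)}^{0,0})(t,w)$.

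The remaining $s$-integration over $[0,t/2]$ gives $\int_0^{t/2}(t-s)^{-1}\mathrm{d}s = \ln 2$ for the $\psi$-piece and $\int_0^{t/2}(t-s)^{-1-|\alpha(x)-\alpha(y)|/\underline{\alpha}}\mathrm{d}s \le C t^{-|\alpha(x)-\alpha(y)|/\underline{\alpha}}$ (uniformly in the exponent, since $\gamma^{-1}(2^\gamma - 1)$ stays bounded) for the $\beta$-piece; the extra logarithmic factors $|\ln(t-s)|, |\ln s|$ contribute at most $C(1+|\ln t|)$, the $|\ln s|$ case after $\int_0^{t/2}|\ln s|\mathrm{d}s \le Ct(1+|\ln t|)$. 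Using $1+|\ln t|\le C|\ln t|$ for $t\le 1/2$ gives the claimed bound for $I_1$; the integral $I_2$ is treated by the symmetric argument under $s\leftrightarrow t-s$ and $x\leftrightarrow y$. The main obstacle will be the bookkeeping of the various logarithmic factors from repeated applications of Lemma \ref{lem:frac esti f_t} together with verification that $\gamma^{-1}(2^\gamma-1)$ remains bounded uniformly as $\alpha(y)\to\alpha(x)$, so that no artificial singularity appears in the $\beta$-piece.
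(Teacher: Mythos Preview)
Your proposal is correct and follows essentially the same route as the paper: split via the identity \eqref{Prop 1, eq 1-1-1-1}, decompose the kernel difference into an $n$-part and an $\alpha$-part, apply Lemmas \ref{lem:frac esti f_t-1} and \ref{lem:frac esti f_t} to the inner operator, then convolve with $f_s^y\le C\rho_{\alpha(y)}^{0,\alpha(y)}$ using Lemmas \ref{lem 1: conv different alpha} and \eqref{eq: Lemma 2.4}, and finally integrate in $s$.

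Two minor points where the paper's bookkeeping is slightly cleaner than yours. First, instead of your decomposition (with denominator $|h|^{-d-\alpha(y)}$ on the $n$-difference, forcing you to add and subtract $|h|^{-d-\alpha(x)}$), the paper writes
\[
\frac{n(x,h)}{|h|^{d+\alpha(x)}}-\frac{n(y,h)}{|h|^{d+\alpha(y)}}
=\frac{n(x,h)-n(y,h)}{|h|^{d+\alpha(x)}}+n(y,h)\Bigl(\frac{1}{|h|^{d+\alpha(x)}}-\frac{1}{|h|^{d+\alpha(y)}}\Bigr),
\]
so that the $n$-difference already carries the exponent $\alpha(x)$ matching $f_{t-s}^x$, and Lemma \ref{lem:frac esti f_t-1} applies directly. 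Your extra cross term $\psi(|x-y|)\cdot|\alpha(x)-\alpha(y)|(\cdots)$ is harmless (it is absorbed into the $\beta$-piece since $\psi$ is bounded), but the paper's ordering avoids it entirely. Second, rather than computing $\int_0^{t/2}(t-s)^{-1-\gamma}\mathrm{d}s$ and checking that $\gamma^{-1}(2^\gamma-1)$ stays bounded, the paper simply uses $t-s\in[t/2,t]$ on this range to replace $(t-s)^{-1-\gamma}$ by $Ct^{-1-\gamma}$ \emph{before} integrating; then only $\int_0^{t/2}(1+|\ln s|)\,\mathrm{d}s\le Ct(1+|\ln t|)$ remains, and no uniformity-in-$\gamma$ issue arises. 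Both shortcuts are worth adopting.
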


\begin{proof}
We denote the first and second term on the right-hand side of (\ref{Prop 1, eq 1-1-1-1})
by $I(t,x,y,w)$ and $J(t,x,y,w)$, respectively. It suffices to establish
the asserted estimates for $|I|$ and $|J|$. Here we only treat $I(t,x,y,w)$,
since the case for $J(t,x,y,w)$ is similar.

By the symmetry of $n(x,\cdot)$ and $n(y,\cdot)$, we see that 
\begin{equation}
I(t,x,y,w)=\int_{0}^{t/2}\int_{\mathbb{R}^{d}}f_{s}^{y}(z)\left[(\mathcal{A}^{y}-\mathcal{A}^{x})f_{t-s}^{x}\right](w-z)\mathrm{d}z\mathrm{d}s.\label{eq: new defi, I}
\end{equation}
Noting
\[
\frac{n(x,h)}{|h|^{d+\alpha(x)}}-\frac{n(y,h)}{|h|^{d+\alpha(y)}}=\frac{n(x,h)}{|h|^{d+\alpha(x)}}-\frac{n(y,h)}{|h|^{d+\alpha(x)}}+\frac{n(y,h)}{|h|^{d+\alpha(x)}}-\frac{n(y,h)}{|h|^{d+\alpha(y)}},
\]
we have
\begin{align}
\left|\left[(\mathcal{A}^{y}-\mathcal{A}^{x})f_{s}^{x}\right](w)\right| & \le\kappa_{2}\int_{\mathbb{R}^{d}\backslash\{0\}}|\delta_{f_{s}^{x}}(w;h)|\cdot\left||h|^{-d-\alpha(x)}-|h|^{-d-\alpha(y)}\right|\mathrm{d}h\nonumber \\
 & \qquad+\int_{\mathbb{R}^{d}\backslash\{0\}}|\delta_{f_{s}^{x}}(w;h)|\cdot\frac{|n(x,h)-n(y,h)|}{|h|^{d+\alpha(x)}}\mathrm{d}h\nonumber \\
 & =:\kappa_{2}F_{1}(s,x,y,w)+F_{2}(s,x,y,w),\label{eq0.5: Lemma 3.2}
\end{align}
For $0<t/2\le s\le t\le1/2$, it follows from Lemma \ref{lem:frac esti f_t}
and the definition of $\beta$ that 
\begin{align}
F_{1}(s,x,y,w) & \le c_{1}|\alpha(x)-\alpha(y)|\left(|\ln s|+\mathbf{1}_{\{|w|\ge2\}}\ln|w|\right)s^{-|\alpha(x)-\alpha(y)|/\underline{\alpha}}\nonumber \\
 & \quad\times\left\{ \rho_{\alpha(x)}^{0,0}(s,w)+\rho_{\alpha(y)}^{0,0}(s,w)\right\} \nonumber \\
 & \le c_{2}\beta(|x-y|)\left(|\ln t|+\mathbf{1}_{\{|w|\ge2\}}\ln|w|\right)t^{-|\alpha(x)-\alpha(y)|/\underline{\alpha}}\nonumber \\
 & \quad\times t^{-1}\left\{ \rho_{\alpha(x)}^{0,\alpha(x)}(s,w)+\rho_{\alpha(y)}^{0,\alpha(y)}(s,w)\right\} .\label{eq1: Lemma 3.2}
\end{align}
Since $f_{s}^{y}(z)\le c_{3}\rho_{\alpha(y)}^{0,\alpha(y)}(s,z)$
by (\ref{bound:gradientg_t-1}), it follows from (\ref{eq: Lemma 2.2}),
(\ref{eq: Lemma 2.4}) and (\ref{eq1: Lemma 3.2}) that for $0<s\le t/2\le1/4$
and $|w|\le1$, 
\begin{align}
 & \int_{\mathbb{R}^{d}}f_{s}^{y}(z)F_{1}(t-s,x,y,w-z)\mathrm{d}z\nonumber \\
 & \quad\le c_{4}t^{-1-|\alpha(x)-\alpha(y)|/\underline{\alpha}}\beta(|x-y|)\int_{\mathbb{R}^{d}}\rho_{\alpha(y)}^{0,\alpha(y)}(s,z)\left(|\ln t|+\mathbf{1}_{\{|w-z|\ge2\}}\ln|w-z|\right)\nonumber \\
 & \qquad\times\left\{ \rho_{\alpha(x)}^{0,\alpha(x)}(t-s,w-z)+\rho_{\alpha(y)}^{0,\alpha(y)}(t-s,w-z)\right\} \mathrm{d}z\nonumber \\
 & \quad\le c_{5}t^{-|\alpha(x)-\alpha(y)|/\underline{\alpha}}\left(1+|\ln s|+|\ln t|\right)\beta(|x-y|)\exp\{|\alpha(x)-\alpha(y)|\cdot|\ln|w||\}\nonumber \\
 & \qquad\times\left\{ \rho_{\alpha(x)}^{0,0}(t,w)+\rho_{\alpha(y)}^{0,0}(t,w)\right\} .\label{eq2: Lemma 3.2}
\end{align}
Similarly, for $0<s\le t/2\le1/4$, we obtain
\begin{align}
 & \int_{\mathbb{R}^{d}}f_{s}^{y}(z)F_{2}(t-s,x,y,w-z)\mathrm{d}z\nonumber \\
 & \quad\le c_{6}\psi(|x-y|)\exp\{|\alpha(x)-\alpha(y)|\cdot|\ln|w||\}\cdot\left\{ \rho_{\alpha(x)}^{0,0}(t,w)+\rho_{\alpha(y)}^{0,0}(t,w)\right\} .\label{eq3: Lemma 3.2}
\end{align}
Since (\ref{eq: new defi, I}) and (\ref{eq0.5: Lemma 3.2}) hold,
the desired estimate for $|I(t,x,y,w)|$ follows when we integrate
(\ref{eq2: Lemma 3.2}) and (\ref{eq3: Lemma 3.2}) with respect to
$s$ from $0$ to $t/2$. The lemma is proved. 
\end{proof}
Based on the last lemma, we are now ready to prove the following. 
\begin{prop}
\label{lem: integral f_t^y - f_t^x}For each $x\in\Rd$, we have 
\[
\lim_{t\to0}\int_{|y-x|\le1}|f_{t}^{y}(y-x)-f_{t}^{x}(y-x)|\mathrm{d}y=0.
\]
\end{prop}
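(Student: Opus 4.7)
The plan is to change variables $w=y-x$ so that we need to show
\[
\int_{|w|\le 1}|f_t^{x+w}(w)-f_t^x(w)|\,\mathrm{d}w\longrightarrow 0\qquad\text{as }t\to 0,
\]
and then to split this integral at the scale $|w|=t^a$ for a fixed exponent $a\in(0,1/\overline{\alpha})$ (say $a=1/(2\overline{\alpha})$). On the outer piece $t^a<|w|\le 1$ I will discard cancellation and use only the pointwise density bound; on the inner piece $|w|\le t^a$ I will invoke the sharp difference estimate of Lemma~\ref{lem: esti f^y-f^x} and use the Dini assumptions to tame its prefactor.

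For the outer region, the pointwise bound $f_t^y(w)\le C\rho_{\alpha(y)}^{0,\alpha(y)}(t,w)\le Ct|w|^{-d-\overline{\alpha}}$ from Lemma~\ref{cor:For-each-k} (together with its analogue for $f_t^x$) gives, after integration in polar coordinates, a bound of order $t\int_{t^a}^{1}r^{-1-\overline{\alpha}}\mathrm{d}r\le Ct^{1-a\overline{\alpha}}$, which vanishes as $t\to 0$ because $a\overline{\alpha}<1$.

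For the inner region Lemma~\ref{lem: esti f^y-f^x} is applicable once $t\le 1/2$. The exponential factor it contains is controlled by
\[
\exp\!\bigl(|\alpha(x)-\alpha(x+w)|\,|\ln|w||\bigr)\le\exp\!\bigl(\beta(|w|)|\ln|w||\bigr)\le M
\]
uniformly on $(0,1]$, thanks to the condition $\beta(r)=o(|\ln r|^{-1})$ in Assumption~\ref{assu: main}(d). Using $|\alpha(x)-\alpha(x+w)|\le\beta(t^a)$ throughout $|w|\le t^a$, we rewrite
\[
t^{1-|\alpha(x)-\alpha(x+w)|/\underline{\alpha}}\le t\exp\!\bigl(\beta(t^a)|\ln t|/\underline{\alpha}\bigr),
\]
and the same Dini condition forces $\beta(t^a)|\ln t|=\beta(t^a)|\ln t^a|/a\to 0$, so this prefactor is of order $t$ as $t\to 0$. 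Combining this with the identity $t\rho_\alpha^{0,0}(t,w)=\rho_\alpha^{0,\alpha}(t,w)$ and the uniform bound $\int\rho_\alpha^{0,\alpha}(t,\cdot)\le C$ from Lemma~\ref{lem: conv ineq of chen}, and using the monotonicity of $\beta$ and $\psi$ to estimate $\beta(|w|)\le\beta(t^a)$ and $\psi(|w|)\le\psi(t^a)$ on the inner region, the two summands of the bound in Lemma~\ref{lem: esti f^y-f^x} contribute at most $C|\ln t|\beta(t^a)$ and $C\psi(t^a)$ respectively. Both tend to $0$ as $t\to 0$, the first by Assumption~\ref{assu: main}(d) and the second because Assumption~\ref{assu: main}(b) forces the non-decreasing function $\psi$ to vanish at the origin.

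The main obstacle is the inner region: the prefactor $t^{1-|\alpha(x)-\alpha(y)|/\underline{\alpha}}$ produced by Lemma~\ref{lem: esti f^y-f^x} is genuinely larger than $t$ and can even blow up as $t\to 0$, so integrating it naively against the heavy-tailed kernel $\rho^{0,0}$ would yield a divergent bound. The entire argument hinges on restricting to the shrinking neighborhood $|w|\le t^a$ and exploiting the fine Dini condition $\beta(r)=o(|\ln r|^{-1})$ precisely to neutralize the exponential $e^{\beta(t^a)|\ln t|/\underline{\alpha}}$ hidden inside that prefactor.
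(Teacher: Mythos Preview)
Your overall strategy coincides with the paper's: split $\{|y-x|\le 1\}$ at a power of $t$, discard cancellation on the outer shell using the pointwise density bound, and apply Lemma~\ref{lem: esti f^y-f^x} on the inner ball. The paper splits at $|y-x|=t^{1/2}$ rather than $t^{1/(2\overline{\alpha})}$, but either choice works.

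There is a small gap you should patch on the inner region. After invoking Lemma~\ref{lem: esti f^y-f^x} and reducing the prefactors, you are left to integrate
\[
\int_{|w|\le t^{a}}\Big(\rho_{\alpha(x)}^{0,\alpha(x)}(t,w)+\rho_{\alpha(x+w)}^{0,\alpha(x+w)}(t,w)\Big)\,\mathrm{d}w,
\]
and you quote Lemma~\ref{lem: conv ineq of chen} for the bound $\int\rho_{\alpha}^{0,\alpha}(t,\cdot)\le C$. That lemma applies to the first summand, but in the second the index $\alpha(x+w)$ varies with the integration variable $w$, so Lemma~\ref{lem: conv ineq of chen} does not literally apply. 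The fix is short and uses exactly the two facts you have already recorded: since $|w|\le t^{a}$ one has $|\alpha(x+w)-\alpha(x)|\le\beta(t^{a})$ and hence $t^{1/\alpha(x+w)}\asymp t^{1/\alpha(x)}$ (because $\beta(t^{a})|\ln t|\to 0$), while for $|w|\le 1$ one has $|w|^{\alpha(x)-\alpha(x+w)}\le e^{\beta(|w|)|\ln|w||}\le M$. Together these give $\rho_{\alpha(x+w)}^{0,\alpha(x+w)}(t,w)\le C\,\rho_{\alpha(x)}^{0,\alpha(x)}(t,w)$ on the inner ball, after which Lemma~\ref{lem: conv ineq of chen} applies.

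The paper sidesteps this issue by a different bookkeeping on the inner region: instead of pulling out $\beta(t^{a})$ and $\psi(t^{a})$ and integrating the kernel, it compares $t$ with $|y-x|^{\alpha(y)}$ case by case and reduces everything to the integrand $|y-x|^{-d}\big(\psi(|y-x|)+\beta(|y-x|)\,|\ln|y-x||\big)$, whose radial integral over $\{|y-x|\le t^{1/2}\}$ tends to $0$ by the Dini assumptions. Your route is arguably cleaner once the comparison $\rho_{\alpha(x+w)}^{0,\alpha(x+w)}\le C\rho_{\alpha(x)}^{0,\alpha(x)}$ is made explicit; the paper's route uses the full integral Dini conditions rather than just $\beta(r)=o(|\ln r|^{-1})$ and $\psi(0+)=0$.
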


\begin{proof}
Let $t\in(0,1/2]$. Define $D_{1}:=\left\{ y:|y-x|\le t^{1/2}\right\} $
and 
\[
D_{2}:=\left\{ y:t^{1/2}<|y-x|\le1\right\} .
\]
It follows from Assumption \ref{assu: main}(d) that for all $x,y\in\Rd$
with $|y-x|\le1$,
\begin{equation}
\exp\{|\alpha(y)-\alpha(x)|\cdot|\ln|y-x||\}\le\exp\{\beta(|y-x|)|\ln|y-x||\}\le c_{1}<\infty.\label{eq0 : prop 3.3}
\end{equation}

``Step 1'': On $D_{1}$, we have 
\begin{align}
t^{-|\alpha(x)-\alpha(y)|/\underline{\alpha}} & =\exp\left\{ \underline{\alpha}^{-1}|\alpha(x)-\alpha(y)|\ln(t^{-1})\right\} \nonumber \\
 & \le\exp\left\{ 2\underline{\alpha}^{-1}\beta(|x-y|)|\ln|x-y||\right\} \overset{(\ref{eq0 : prop 3.3})}{\le}c_{2}<\infty.\label{eq0.5 : prop 3.3}
\end{align}
By (\ref{eq0 : prop 3.3}), (\ref{eq0.5 : prop 3.3}) and Lemma \ref{lem: esti f^y-f^x},
we see that for $y\in D_{1}$,
\begin{align}
 & |f_{t}^{y}(y-x)-f_{t}^{x}(y-x)|\nonumber \\
 & \le c_{3}t|\ln t|\beta(|x-y|)\rho_{\alpha(x)}^{0,0}(t,y-x)+c_{3}t|\ln t|\beta(|x-y|)\rho_{\alpha(y)}^{0,0}(t,y-x)\nonumber \\
 & \quad+c_{3}t\psi(|x-y|)\rho_{\alpha(x)}^{0,0}(t,y-x)+c_{3}t\psi(|x-y|)\rho_{\alpha(y)}^{0,0}(t,y-x)\nonumber \\
 & =:c_{3}I_{1}(t)+c_{3}I_{2}(t)+c_{3}I_{3}(t)+c_{3}I_{4}(t).\label{eq0.7 : prop 3.3}
\end{align}
If $|y-x|\le t^{1/\alpha(y)}$, then 
\begin{align}
I_{2}(t)+I_{4}(t) & \le t\left[\left(\ln t^{-1}\right)\beta(|x-y|)+\psi(|x-y|)\right](t^{1/\alpha(y)})^{-d-\alpha(y)}\nonumber \\
 & \le c_{4}\left[\left(\ln|x-y|\right)\beta(|x-y|)+\psi(|x-y|)\right]\left(|x-y|\right)^{-d}.\label{eq1  : prop 3.3}
\end{align}
Note that $t\mapsto t\ln t^{-1}$ is increasing on $(0,1/e)$. If
$t$ is sufficiently small and $t^{1/\alpha(y)}<|y-x|\le t^{1/2}$,
then 
\begin{align}
I_{2}(t)+I_{4}(t) & \le\left[t\left(\ln t^{-1}\right)\beta(|x-y|)+t\psi(|x-y|)\right](|x-y|)^{-d-\alpha(y)}\nonumber \\
 & \le c_{5}\left[\left(\ln|x-y|\right)\beta(|x-y|)+\psi(|x-y|)\right]\left(|x-y|\right)^{-d}.\label{eq2 : prop 3.3}
\end{align}
It follows from (\ref{eq1  : prop 3.3}) and (\ref{eq2 : prop 3.3})
that 
\[
\int_{D_{1}}\left[I_{2}(t)+I_{4}(t)\right]\mathrm{d}y\le c_{6}\int_{0}^{t^{1/2}}\frac{\psi(r)+\beta(r)|\ln r|}{r}\mathrm{d}r\to0,\quad\mbox{as \ensuremath{t\to0},}
\]
where the convergence of the integral to $0$ follows by Assumption
\ref{assu: main}(b) and (d). The cases for $I_{1}$ and $I_{3}$
are similar, so, by (\ref{eq0.7 : prop 3.3}), 
\begin{equation}
\lim_{t\to0}\int_{D_{1}}|f_{t}^{y}(y-x)-f_{t}^{x}(y-x)|\mathrm{d}y=0.\label{eq 3 : prop 3.3}
\end{equation}

``Step 2'': On $D_{2}$, we have 
\begin{align}
 & \int_{D_{2}}|f_{t}^{y}(y-x)-f_{t}^{x}(y-x)|\mathrm{d}y\nonumber \\
 & \quad\le\int_{t^{1/2}<|y-x|\le1}\left[f_{t}^{y}(y-x)+f_{t}^{x}(y-x)\right]\mathrm{d}y\nonumber \\
 & \quad\le c_{7}\int_{t^{1/2}<|y-x|\le1}\frac{t}{|x-y|^{d+\overline{\alpha}}}\mathrm{d}y\le c_{8}t^{1-\overline{\alpha}/2}\to0,\quad\mbox{as \ensuremath{t\to0.}}\label{eq 4 : prop 3.3}
\end{align}

The assertion now follows by (\ref{eq 3 : prop 3.3}) and (\ref{eq 4 : prop 3.3}).
\end{proof}
In the rest of this section we establish some estimates that we will
use in the proof of Theorem \ref{thm: main}.

Define

\begin{equation}
q(t,x,y):=f_{t}^{y}(y-x),\quad t>0,\ x,y\in\Rd.\label{eq: defi q}
\end{equation}
The function $q(t,x,y)$ is usually called the parametrix. Let
\begin{align}
F(t,x,y):= & \left(\mathcal{A}-\mathcal{A}^{y}\right)q(t,\cdot,y)(x)\nonumber \\
= & \int_{\mathbb{R}^{d}\backslash\{0\}}\Big[q(t,x+h,y)-q(t,x,y)\nonumber \\
 & \qquad-\mathbf{1}_{\{|h|\le1\}}h\cdot\nabla_{x}q(t,x,y)\Big]\left(\frac{n(x,h)}{|h|^{d+\alpha(x)}}-\frac{n(y,h)}{|h|^{d+\alpha(y)}}\right)\mathrm{d}h.\label{eq: defi F}
\end{align}

Similarly to (\ref{eq0.5: Lemma 3.2}), we have 
\begin{align}
|F(t,x,y)| & \le\kappa_{2}\int_{\mathbb{R}^{d}\backslash\{0\}}|\delta_{f_{t}^{y}(y-\cdot)}(x;h)|\cdot\left||h|^{-d-\alpha(x)}-|h|^{-d-\alpha(y)}\right|\mathrm{d}h\nonumber \\
 & \qquad+\int_{\mathbb{R}^{d}\backslash\{0\}}|\delta_{f_{t}^{y}(y-\cdot)}(x;h)|\cdot\frac{|n(x,h)-n(y,h)|}{|h|^{d+\alpha(y)}}\mathrm{d}h\nonumber \\
 & =:\kappa_{2}F_{1}(t,x,y)+F_{2}(t,x,y).\label{eq: decom F}
\end{align}
Note that $\delta_{f_{t}^{y}(y-\cdot)}(x;h)=\delta_{f_{t}^{y}}(y-x;h)$.
By Lemmas \ref{lem:frac esti f_t-1} and \ref{lem:frac esti f_t},
we get that for $t>0$ and $x,y\in\Rd$, 
\begin{align}
F_{1}(t,x,y) & \le c\beta\left(|x-y|\right)\left(1+|\ln t|+\mathbf{1}_{\{|y-x|\ge2\}}\ln|y-x|\right)\nonumber \\
 & \quad\times\big[t^{\left(\alpha(y)-\alpha(x)\right)/\alpha(y)}\vee1\big]\rho_{\alpha(y)}^{0,0}(t,y-x)\nonumber \\
 & \ +c\beta\left(|x-y|\right)\mathbf{1}_{\{|y-x|\ge2\}}\ln\left(|y-x|\right)\rho_{\alpha(x)}^{0,0}(t,y-x)\label{eq: esti, F_1}
\end{align}
and
\begin{equation}
F_{2}(t,x,y)\le c\psi\left(|x-y|\right)\rho_{\alpha(y)}^{0,0}(t,y-x),\label{eq: esti, F_2}
\end{equation}
where $c=c(d,\alpha_{1},\alpha_{2},\Lambda_{1},\Lambda_{2})>0$ is
a constant.

As we will see later, the essential ingredient to prove Theorem \ref{thm: main}
is to show that 
\[
\sup_{x\in\Rd}\int_{0}^{\infty}\int_{\Rd}e^{-\lambda t}|F(t,x,y)|\mathrm{d}y\mathrm{d}t\le\frac{1}{2}
\]
for sufficiently large $\lambda>0$. We will achieve this in a few
steps. First, we estimate the integral $\int_{\Rd}|F(t,x,y)\mathrm{|d}y$
when $t$ is away from $0$.
\begin{lem}
\label{lem: esti delta >1}Suppose $0<\delta<1$. There exists $C=C(\delta,d,\underline{\alpha},\overline{\alpha},\kappa_{1},\kappa_{2})>0$
such that for all $x\in\Rd$ and $t\ge\delta$, 
\[
\int_{\Rd}|F(t,x,y)\mathrm{|d}y\le C\left(1+|\ln t|\right)t^{\left(d+2\right)/\underline{\alpha}}.
\]
 
\end{lem}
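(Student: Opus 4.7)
The plan is to estimate $|F(t,x,y)|$ pointwise via the decomposition \eqref{eq: decom F} together with the bounds \eqref{eq: esti, F_1} and \eqref{eq: esti, F_2}, and then integrate in $y$ using the convolution inequalities of Section~2. Since $n$ is bounded and $\alpha$ takes values in $[\underline{\alpha},\overline{\alpha}]$, both $\beta(\cdot)$ and $\psi(\cdot)$ are bounded by universal constants, and the factor $t^{(\alpha(y)-\alpha(x))/\alpha(y)}\vee 1$ is dominated by $1+t^{(\overline{\alpha}-\underline{\alpha})/\underline{\alpha}}$ for all $t>0$. Using these observations, \eqref{eq: esti, F_1} and \eqref{eq: esti, F_2} collapse to
\[
|F(t,x,y)|\le C\bigl(1+|\ln t|+\mathbf{1}_{\{|y-x|\ge 2\}}\ln|y-x|\bigr)\bigl(1+t^{(\overline{\alpha}-\underline{\alpha})/\underline{\alpha}}\bigr)\bigl(\rho_{\alpha(y)}^{0,0}(t,y-x)+\rho_{\alpha(x)}^{0,0}(t,y-x)\bigr).
\]

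Next, I would integrate in $y$. The key identity is $\rho_\alpha^{0,0}(t,z)=t^{-1}\rho_\alpha^{0,\alpha}(t,z)$, which, together with Lemma~\ref{lem: conv ineq of chen}, yields, uniformly in $\alpha\in[\underline{\alpha},\overline{\alpha}]$,
\[
\int_{\Rd}\rho_\alpha^{0,0}(t,y-x)\,\mathrm{d}y\le Ct^{-1},\qquad \int_{\Rd}|\ln|y-x||\,\rho_\alpha^{0,0}(t,y-x)\,\mathrm{d}y\le Ct^{-1}(1+|\ln t|).
\]
Plugging these into the pointwise bound above produces the uniform estimate
\[
\int_{\Rd}|F(t,x,y)|\,\mathrm{d}y\le C(1+|\ln t|)^{2}\bigl(1+t^{(\overline{\alpha}-\underline{\alpha})/\underline{\alpha}}\bigr)t^{-1}
\]
valid for every $t>0$ and $x\in\Rd$.

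It remains, for $t\ge\delta$, to convert this into the form $C_\delta(1+|\ln t|)t^{(d+2)/\underline{\alpha}}$. For $t\in[\delta,1]$ the left-hand side is bounded by a constant depending on $\delta,\underline{\alpha},\overline{\alpha}$, which is then absorbed into $(1+|\ln t|)\,t^{(d+2)/\underline{\alpha}}$ since $t^{(d+2)/\underline{\alpha}}\ge\delta^{(d+2)/\underline{\alpha}}$. For $t\ge 1$, the product $t^{-1}(1+t^{(\overline{\alpha}-\underline{\alpha})/\underline{\alpha}})$ is bounded by $2t^{(\overline{\alpha}-2\underline{\alpha})/\underline{\alpha}}\le 2\,t^{(d+2)/\underline{\alpha}}$ because $\overline{\alpha}-2\underline{\alpha}<2\le d+2$, and the remaining $(1+\ln t)$ factor is easily swallowed by the strict slack $(d+2-\overline{\alpha}+2\underline{\alpha})/\underline{\alpha}>0$, i.e., by a small positive power of $t$. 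Combining the two ranges yields the claim.

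There is no conceptual obstacle here beyond what is already handled by Lemmas~\ref{lem:frac esti f_t-1} and~\ref{lem:frac esti f_t}; the only care needed is bookkeeping. Concretely, one must track the origin of each logarithmic factor (one comes from the indicator term $\mathbf{1}_{\{|y-x|\ge 2\}}\ln|y-x|$ in \eqref{eq: esti, F_1}, the other from the $|\ln t|$ already present there), and one must keep separate the two densities $\rho_{\alpha(x)}^{0,0}$ and $\rho_{\alpha(y)}^{0,0}$ that appear in \eqref{eq: esti, F_1}, applying the integral bounds from Lemma~\ref{lem: conv ineq of chen} to each.
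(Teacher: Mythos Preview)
Your overall strategy---collapse \eqref{eq: esti, F_1} and \eqref{eq: esti, F_2} into a single pointwise bound and then integrate in $y$---is the same as the paper's, but two steps are not quite right as written.

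First, the inequality $t^{(\alpha(y)-\alpha(x))/\alpha(y)}\vee 1\le 1+t^{(\overline{\alpha}-\underline{\alpha})/\underline{\alpha}}$ is \emph{false} for general $t>0$: when $\alpha(x)>\alpha(y)$ and $t<1$, the left side equals $t^{(\alpha(y)-\alpha(x))/\alpha(y)}\to\infty$ as $t\to 0$, whereas the right side is at most $2$. Consequently your displayed bound ``valid for every $t>0$'' is not. This is harmless because you only ever use $t\ge\delta$, in which range the factor is bounded by a $\delta$-dependent constant; just restrict the claim accordingly.

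Second, and more substantively, Lemma~\ref{lem: conv ineq of chen} cannot be applied directly to the term $\rho_{\alpha(y)}^{0,0}(t,y-x)$. That lemma says $\int_{\Rd}\varrho_\alpha^{0,\alpha}(t,z)\,\mathrm{d}z\le C$ with $C$ uniform in the \emph{parameter} $\alpha$, but in your integral the index $\alpha(y)$ varies with the integration variable. Uniformity in a parameter does not control such diagonal integrals in general. (For $\rho_{\alpha(x)}^{0,0}(t,y-x)$ there is no issue, since $\alpha(x)$ is fixed.) To repair this for $t\ge\delta$, note that $\rho_{\alpha(y)}^{0,0}(t,y-x)\le t^{-1-d/\alpha(y)}\le 2\delta^{-1-d/\underline{\alpha}}$ for all $y$, and $\rho_{\alpha(y)}^{0,0}(t,y-x)\le|y-x|^{-d-\underline{\alpha}}$ for $|y-x|\ge 1$; hence $\int_{\Rd}\rho_{\alpha(y)}^{0,0}(t,y-x)\,\mathrm{d}y\le C_\delta$ (and similarly with an extra $|\ln|y-x||$). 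This is weaker than your claimed $Ct^{-1}$, but still more than enough to be absorbed into $C_\delta(1+|\ln t|)t^{(d+2)/\underline{\alpha}}$. Observe that this fix is precisely the near/far splitting the paper performs explicitly; once you patch the gap, the two arguments coincide.
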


\begin{proof}
We split $\Rd$ as the union of $\left\{ y:|y-x|<t^{1/\underline{\alpha}}\right\} $
and $\left\{ y:t^{1/\underline{\alpha}}\le|y-x|\right\} $. Note that
for $t\ge\delta$,
\begin{equation}
\rho_{\alpha(y)}^{0,0}(t,y-x)+\rho_{\alpha(x)}^{0,0}(t,y-x)\le t^{-1}\left(t^{-d/\alpha(x)}+t^{-d/\alpha(y)}\right)\le2\delta^{-1-d/\underline{\alpha}}.\label{eq 1: lemma 3.4}
\end{equation}
Since $\beta$ and $\psi$ are bounded by Assumption \ref{assu: main}(a)
and (c), it follows from (\ref{eq: esti, F_1}), (\ref{eq: esti, F_2})
and (\ref{eq 1: lemma 3.4}) that for $t\in[\delta,\infty)$,
\begin{align}
\int_{|y-x|<t^{1/\underline{\alpha}}}|F(t,x,y)|\mathrm{d}y & \le c_{1}\int_{|y-x|<t^{1/\underline{\alpha}}}t^{2/\underline{\alpha}}\left(1+|\ln t|+|\ln|y-x||\right)\mathrm{d}y\nonumber \\
 & \le c_{2}\left(1+|\ln t|\right)t^{\left(d+2\right)/\underline{\alpha}}.\label{eq 2: lemma 3.4}
\end{align}
Similarly, for $t\in[\delta,\infty)$,
\begin{align}
 & \int_{|y-x|\ge t^{1/\underline{\alpha}}}|F(t,x,y)|\mathrm{d}y\nonumber \\
 & \quad\le c_{3}\left(1+|\ln t|\right)t^{2/\underline{\alpha}}\int_{|y-x|\ge t^{1/\underline{\alpha}}}\left(|y-x|^{-d-\underline{\alpha}}+|y-x|^{-d-\overline{\alpha}}\right)\left(1+|\ln|y-x||\right)\mathrm{d}y\nonumber \\
 & \quad\le c_{4}\left(1+|\ln t|\right)t^{2/\underline{\alpha}}\int_{t^{1/\underline{\alpha}}}^{\infty}\left(r^{-1-\underline{\alpha}}+r^{-1-\overline{\alpha}}\right)(1+|\ln r|)\mathrm{d}r\nonumber \\
 & \quad\le c_{5}\left(1+|\ln t|\right)^{2}t^{2/\underline{\alpha}}(t^{-1}+t^{-\overline{\alpha}/\underline{\alpha}})\le c_{6}\left(1+|\ln t|\right)t^{\left(d+2\right)/\underline{\alpha}}.\label{eq 3: lemma 3.4}
\end{align}
Combining (\ref{eq 2: lemma 3.4}) and (\ref{eq 3: lemma 3.4}) gives
the assertion.
\end{proof}

\section{A special case: $\overline{\alpha}<2\underline{\alpha}$ }

In this section we will prove the statement of Theorem \ref{thm: main}
under the additional condition that 
\begin{equation}
\overline{\alpha}<2\underline{\alpha},\label{eq: extral cond.}
\end{equation}
where $\underline{\alpha}$ and $\overline{\alpha}$ are as in Assumption\textit{
}\ref{assu: main}(c). In the next section we will show that this
extra requirement is not necessary by some localization argument.

Recall that $F(t,x,y)$ is defined in (\ref{eq: defi F}). 
\begin{lem}
\label{lem: delta to 0} Assume that \emph{(\ref{eq: extral cond.})}
is true. Then 
\[
\lim_{\delta\to0}\left(\sup_{x\in\Rd}\int_{0}^{\delta}\int_{\Rd}|F(t,x,y)\mathrm{|d}y\mathrm{d}t\right)=0.
\]
 
\end{lem}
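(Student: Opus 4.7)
The plan is to bound $\int_{\Rd}|F(t,x,y)|\,\mathrm{d}y$ by a function of $t\in(0,1)$ whose integral over $(0,\delta)$ vanishes as $\delta\to0$, uniformly in $x$. I start from $|F|\le\kappa_{2}F_{1}+F_{2}$ together with the pointwise estimates \eqref{eq: esti, F_1} and \eqref{eq: esti, F_2}, and use $t^{(\alpha(y)-\alpha(x))/\alpha(y)}\vee 1\le t^{-|\alpha(x)-\alpha(y)|/\underline{\alpha}}$ for $t\le 1$ combined with $|\alpha(x)-\alpha(y)|\le\overline{\alpha}-\underline{\alpha}$. The additional hypothesis \eqref{eq: extral cond.} enters precisely here: it guarantees $(\overline{\alpha}-\underline{\alpha})/\underline{\alpha}<1$, so that the worst-case prefactor $t^{-(\overline{\alpha}-\underline{\alpha})/\underline{\alpha}}$ can be absorbed by a single factor of $t$ coming from the identity $\rho_{\alpha(y)}^{0,0}=\rho_{\alpha(y)}^{0,\alpha(y)}/t$.

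I then split the $y$-integral into three regions
\[
D_{A}=\{|y-x|\le t^{1/\underline{\alpha}}\},\quad D_{B}=\{t^{1/\underline{\alpha}}<|y-x|\le 1\},\quad D_{C}=\{|y-x|>1\}.
\]
On $D_{A}$ the key observation is that $|y-x|\le t^{1/\underline{\alpha}}$ forces $|\ln|y-x||\ge|\ln t|/\underline{\alpha}$, so the bad factor is controlled as
\[
t^{-|\alpha(x)-\alpha(y)|/\underline{\alpha}}\le\exp\bigl(|\alpha(x)-\alpha(y)|\cdot|\ln|y-x||\bigr)\le c_{1}
\]
by Assumption \ref{assu: main}(d), exactly as in \eqref{eq0 : prop 3.3}. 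Using the monotonicity of $\beta$ and $\psi$ and the crude estimate $\int_{D_{A}}\rho_{\alpha(y)}^{0,0}(t,y-x)\,\mathrm{d}y\le Ct^{-1}$ (from $\rho_{\alpha(y)}^{0,0}\le t^{-d/\underline{\alpha}-1}$ against $|D_{A}|\le ct^{d/\underline{\alpha}}$), I obtain
\[
\int_{D_{A}}|F|\,\mathrm{d}y\le Ct^{-1}\bigl[(1+|\ln t|)\beta(t^{1/\underline{\alpha}})+\psi(t^{1/\underline{\alpha}})\bigr].
\]
After the substitution $r=t^{1/\underline{\alpha}}$, integration over $t\in(0,\delta)$ reduces to a constant multiple of $\int_{0}^{\delta^{1/\underline{\alpha}}}r^{-1}\bigl(\beta(r)|\ln r|+\psi(r)\bigr)\,\mathrm{d}r$, which vanishes as $\delta\to0$ by Assumption \ref{assu: main}(b) and (d).

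On $D_{C}$ the functions $\beta,\psi$ are merely bounded, while $\rho_{\alpha(x)}^{0,0}(t,y-x),\rho_{\alpha(y)}^{0,0}(t,y-x)\le|y-x|^{-d-\underline{\alpha}}$, which is integrable together with the $(1+\ln|y-x|)$ factor. Combined with the prefactor bound $t^{-(\overline{\alpha}-\underline{\alpha})/\underline{\alpha}}$, this produces $\int_{D_{C}}|F|\,\mathrm{d}y\le C(1+|\ln t|)t^{-(\overline{\alpha}-\underline{\alpha})/\underline{\alpha}}$, whose $t$-integral on $(0,\delta)$ is finite and tends to zero as $\delta\to0$, precisely because $(\overline{\alpha}-\underline{\alpha})/\underline{\alpha}<1$ under \eqref{eq: extral cond.}.

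The hard part is $D_{B}$, where $|y-x|$ may lie on either side of the natural scale $t^{1/\alpha(y)}$ depending on $\alpha(y)\in[\underline{\alpha},\overline{\alpha}]$, so neither $\rho_{\alpha(y)}^{0,0}\le t^{-d/\underline{\alpha}-1}$ nor $\rho_{\alpha(y)}^{0,0}\le|y-x|^{-d-\alpha(y)}$ dominates. My plan is to subdivide $D_{B}$ at the threshold $|y-x|=t^{1/\overline{\alpha}}$. On $\{t^{1/\overline{\alpha}}<|y-x|\le 1\}$ one has $|y-x|\ge t^{1/\alpha(y)}$ for every admissible $\alpha(y)$, so $\rho_{\alpha(y)}^{0,0}(t,y-x)\le|y-x|^{-d-\alpha(y)}\le|y-x|^{-d-\overline{\alpha}}$, and the resulting radial integral $\int_{t^{1/\overline{\alpha}}}^{1}\beta(r)r^{-1-\overline{\alpha}}\,\mathrm{d}r$ is controlled via Dini's condition $\beta(r)=o(|\ln r|^{-1})$ after an integration by parts. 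On $\{t^{1/\underline{\alpha}}<|y-x|\le t^{1/\overline{\alpha}}\}$ the uniform bound $\rho_{\alpha(y)}^{0,0}\le t^{-d/\underline{\alpha}-1}$ applies together with the thin annular volume $O(t^{d/\overline{\alpha}})$ and the smallness $\beta(t^{1/\overline{\alpha}}),\psi(t^{1/\overline{\alpha}})\to 0$. In each sub-region the combination of the decaying $\rho$ kernel, the Dini smallness of $\beta$ and $\psi$ at the relevant scale, and the positive power $t^{(2\underline{\alpha}-\overline{\alpha})/\underline{\alpha}}$ extracted by trading one factor of $t$ against $t^{-(\overline{\alpha}-\underline{\alpha})/\underline{\alpha}}$ under \eqref{eq: extral cond.} yields a bound of the form $Ct^{\eta}(1+|\ln t|)^{2}$ with $\eta>-1$, whose $t$-integral over $(0,\delta)$ tends to zero. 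Adding the contributions from $D_{A},D_{B},D_{C}$ and taking the supremum in $x$ completes the proof.
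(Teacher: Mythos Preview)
Your treatment of $D_{A}$ and $D_{C}$ is sound and parallels the paper's handling of the innermost and outermost zones. The gap is in $D_{B}$, and it is genuine.

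On the inner annulus $\{t^{1/\underline{\alpha}}<|y-x|\le t^{1/\overline{\alpha}}\}$ your density bound $\rho_{\alpha(y)}^{0,0}\le t^{-d/\underline{\alpha}-1}$ and volume bound $O(t^{d/\overline{\alpha}})$ do not compensate: their product is $t^{-d(1/\underline{\alpha}-1/\overline{\alpha})-1}$, an unbounded negative power of $t$ for any $d\ge1$, and multiplying by $|\ln t|\,t^{-(\overline{\alpha}-\underline{\alpha})/\underline{\alpha}}\beta(t^{1/\overline{\alpha}})$ only makes this worse. No amount of Dini smallness of $\beta$ can absorb an arbitrarily negative power. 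On the outer annulus $\{t^{1/\overline{\alpha}}<|y-x|\le1\}$ the same problem recurs: the radial integral $\int_{t^{1/\overline{\alpha}}}^{1}\beta(r)r^{-1-\overline{\alpha}}\,\mathrm{d}r$ is, even using $\beta(r)=o(|\ln r|^{-1})$, of order $o(t^{-1}/|\ln t|)$ as $t\to0$; combined with your prefactor $|\ln t|\,t^{-(\overline{\alpha}-\underline{\alpha})/\underline{\alpha}}$ this yields $o(t^{-\overline{\alpha}/\underline{\alpha}})$, which is not integrable near $0$ as soon as $\overline{\alpha}>\underline{\alpha}$. The claimed ``positive power $t^{(2\underline{\alpha}-\overline{\alpha})/\underline{\alpha}}$'' never materializes, because the only available factor of $t$ has already been spent converting $\rho_{\alpha(y)}^{0,\alpha(y)}$ to $\rho_{\alpha(y)}^{0,0}$.

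The root cause is that once $|y-x|>t^{1/\underline{\alpha}}$ you can no longer control $t^{-|\alpha(x)-\alpha(y)|/\underline{\alpha}}$ by a constant via Assumption~\ref{assu: main}(d), and replacing it by the crude $t^{-(\overline{\alpha}-\underline{\alpha})/\underline{\alpha}}$ loses exactly one power of $t$ too many. The paper avoids this by reversing the order of integration and decomposing in $y$ relative to a \emph{fixed} scale $c\in(0,1/2)$ (chosen so that $\int_{0}^{c}r^{-1}(\psi(r)+\beta(r)|\ln r|)\,\mathrm{d}r<\varepsilon$), rather than relative to $t$. On the intermediate region $\{\delta\le|y-x|^{\alpha(y)}<c^{\alpha(y)}\}$ the key step is to compute $\int_{0}^{\delta}t^{-|\alpha(x)-\alpha(y)|/\underline{\alpha}}|\ln t|\,\mathrm{d}t\le c_{*}\,\delta^{1-|\alpha(x)-\alpha(y)|/\underline{\alpha}}|\ln\delta|$ and then, using the monotonicity of $s\mapsto s^{1-\gamma}|\ln s|$ on $(0,c^{\underline{\alpha}}]$ (valid uniformly since $\gamma=|\alpha(x)-\alpha(y)|/\underline{\alpha}<1$ by \eqref{eq: extral cond.}), replace $\delta$ by $|y-x|^{\alpha(y)}$. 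After this substitution and an application of \eqref{eq0 : prop 3.3} the $y$-integral collapses to $\int_{0}^{c}r^{-1}(\psi(r)+\beta(r)|\ln r|)\,\mathrm{d}r<\varepsilon$, uniformly in $\delta$. This monotonicity trick is what your $y$-first decomposition is missing.
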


\begin{proof}
Let $\varepsilon>0$ be arbitrary. We claim that we can find a sufficiently
small constant $c\in(0,1/2)$ such that
\begin{equation}
\int_{0}^{c}\frac{\psi(r)+\beta(r)|\ln r|}{r}\mathrm{d}r<\varepsilon\label{eq: choice of c}
\end{equation}
and 
\begin{equation}
\delta\mapsto\delta^{1-|\alpha(x)-\alpha(y)|/\underline{\alpha}}\ln\delta^{-1}\quad\mbox{is increasing on \ensuremath{(0,c^{\underline{\alpha}}],} \ for all \ensuremath{x,y\in\Rd.} }\label{eq: condition for the constant c}
\end{equation}
Indeed, (\ref{eq: choice of c}) is easily fulfilled by Assumption
\ref{assu: main}(b) and (d). To see the existence of $c$ as in (\ref{eq: condition for the constant c}),
we only need to note 
\begin{equation}
|\alpha(x)-\alpha(y)|/\underline{\alpha}\le\left(\overline{\alpha}-\underline{\alpha}\right)/\underline{\alpha}<1,\quad x,y\in\Rd,\label{eq0.5: lemma 4.1}
\end{equation}
which implies that the derivative of the function in (\ref{eq: condition for the constant c})
is positive for small enough $\delta$, say, smaller than a constant
$\delta_{0}>0$. Moreover, by (\ref{eq0.5: lemma 4.1}), $\delta_{0}$
can be chosen to be independent of $x,y\in\Rd$. 

In the rest of the proof we consider 
\begin{equation}
\delta\in(0,c^{\overline{\alpha}}/2]\subset(0,1/2].\label{eq: range delta, lemma 4.1}
\end{equation}
Define $D_{1}:=\left\{ y:0<|y-x|^{\alpha(y)}<\delta\right\} $, $D_{2}:=\left\{ y:\delta\le|y-x|^{\alpha(y)}<c^{\alpha(y)}\right\} $
and $D_{3}:=\left\{ y:|y-x|\ge c\right\} $. Then
\begin{align*}
\int_{0}^{\delta}\int_{\Rd}|F(t,x,y)\mathrm{|d}y\mathrm{d}t & =\left(\int_{D_{1}}\int_{0}^{\delta}+\int_{D_{2}}\int_{0}^{\delta}+\int_{D_{3}}\int_{0}^{\delta}\right)|F(t,x,y)|\mathrm{d}t\mathrm{d}y\\
 & =:I_{\delta}(x)+J_{\delta}(x)+H_{\delta}(x).
\end{align*}

We now treat $I_{\delta}(x)$, $J_{\delta}(x)$ and $H_{\delta}(x)$
separately. We first make two observations. First, it follows from
(\ref{eq: esti, F_1}) and (\ref{eq: esti, F_2}) that for $|y-x|\le1$
and $0<t\le1/2$,
\begin{equation}
|F(t,x,y)|\le c_{1}\left[\psi\left(|x-y|\right)+t^{-|\alpha(x)-\alpha(y)|/\underline{\alpha}}|\ln t|\beta\left(|x-y|\right)\right]\varrho_{\alpha(y)}^{0,0}(t,x-y).\label{eq1: lemma 4.1}
\end{equation}
Second, as in (\ref{eq0 : prop 3.3}), if $|y-x|\le1$, then
\begin{equation}
|x-y|^{-\alpha(y)|\alpha(x)-\alpha(y)|/\underline{\alpha}}\le\exp\left(2\underline{\alpha}^{-1}\beta\left(|x-y|\right)|\ln|x-y||\right)\le c_{2}<\infty.\label{eq2: lemma 4.1}
\end{equation}

(i) If $y\in D_{1}$, then $|y-x|^{\alpha(y)}<\delta\overset{(\ref{eq: range delta, lemma 4.1})}{\le}1/2$
and 
\begin{equation}
|\ln|y-x||\ge c_{3}>0.\label{eq2.5: lemma 4.1}
\end{equation}
Therefore, for $y\in D_{1}$, we have 

\begin{align}
 & \int_{0}^{\delta}t^{-|\alpha(x)-\alpha(y)|/\underline{\alpha}}|\ln t|\varrho_{\alpha(y)}^{0,0}(t,x-y)\mathrm{d}t\nonumber \\
 & \quad\le\int_{0}^{|y-x|^{\alpha(y)}}t^{-|\alpha(x)-\alpha(y)|/\underline{\alpha}}|\ln t|\cdot|x-y|^{-d-\alpha(y)}\mathrm{d}t\nonumber \\
 & \qquad+\int_{|y-x|^{\alpha(y)}}^{\delta}t^{-|\alpha(x)-\alpha(y)|/\underline{\alpha}}|\ln t|(t^{1/\alpha(y)})^{-d-\alpha(y)}\mathrm{d}t\nonumber \\
 & \quad\overset{(\ref{eq0.5: lemma 4.1})}{\le}c_{4}|x-y|^{-d}\left(1+\left|\ln|x-y|\right|\right)|x-y|^{-\alpha(y)|\alpha(x)-\alpha(y)|/\underline{\alpha}}\nonumber \\
 & \quad\overset{(\ref{eq2: lemma 4.1}),(\ref{eq2.5: lemma 4.1})}{\le}c_{2}c_{4}(1+c_{3}^{-1})|x-y|^{-d}\left|\ln|x-y|\right|,\label{eq3: lemma 4.1}
\end{align}
and, similarly, 
\begin{equation}
\int_{0}^{\delta}\varrho_{\alpha(y)}^{0,0}(t,y-x)\mathrm{d}t\le c_{5}|x-y|^{-d}.\label{eq4: lemma 4.1}
\end{equation}
Note that $\delta\le1/2$. It follows from (\ref{eq1: lemma 4.1}),
(\ref{eq3: lemma 4.1}) and (\ref{eq4: lemma 4.1}) that 

\begin{align}
I_{\delta}(x) & \le c_{6}\int_{D_{1}}|x-y|^{-d}\left(\psi\left(|x-y|\right)+\beta\left(|x-y|\right)\left|\ln|x-y|\right|\right)\mathrm{d}y\nonumber \\
 & \le c_{7}\int_{0}^{\delta^{1/\overline{\alpha}}}\frac{\psi(r)+\beta(r)|\ln r|}{r}\mathrm{d}r.\label{eq: I, lemma 4.1}
\end{align}

(ii) If $y\in D_{2}$, then $\delta\le|y-x|^{\alpha(y)}<c^{\alpha(y)}\le c^{\underline{\alpha}}$
and 
\begin{align}
 & \int_{0}^{\delta}t^{-|\alpha(x)-\alpha(y)|/\underline{\alpha}}|\ln t|\mathrm{d}t\overset{(\ref{eq0.5: lemma 4.1})}{\le}c_{8}\delta^{1-|\alpha(x)-\alpha(y)|/\underline{\alpha}}\left(1+\ln\delta^{-1}\right)\nonumber \\
 & \qquad\overset{(\ref{eq: range delta, lemma 4.1})}{\le}c_{9}\delta^{1-|\alpha(x)-\alpha(y)|/\underline{\alpha}}\ln\delta^{-1}\nonumber \\
 & \qquad\overset{(\ref{eq: condition for the constant c})}{\le}c_{10}|x-y|^{\alpha(y)-\alpha(y)|\alpha(x)-\alpha(y)|/\underline{\alpha}}|\ln|x-y||\nonumber \\
 & \qquad\overset{(\ref{eq2: lemma 4.1})}{\le}c_{11}|x-y|^{\alpha(y)}|\ln|x-y||.\label{eq5: lemma 4.1}
\end{align}
Therefore, for $y\in D_{2}$, 
\begin{align*}
 & \int_{0}^{\delta}|F(t,x,y)\mathrm{|d}t\\
 & \quad\overset{(\ref{eq1: lemma 4.1})}{\le}c_{1}|x-y|^{-d-\alpha(y)}\int_{0}^{\delta}\left[\psi\left(|x-y|\right)+t^{-|\alpha(x)-\alpha(y)|/\underline{\alpha}}|\ln t|\beta\left(|x-y|\right)\right]\mathrm{d}t\\
 & \quad\overset{(\ref{eq5: lemma 4.1})}{\le}c_{12}|x-y|^{-d}\left[\delta|x-y|^{-\alpha(y)}\psi\left(|x-y|\right)+\beta\left(|x-y|\right)\left|\ln|y-x|\right|\right]\\
 & \quad\ \le\ c_{12}|x-y|^{-d}\left[\psi\left(|x-y|\right)+\beta\left(|x-y|\right)\left|\ln|y-x|\right|\right].
\end{align*}
So
\begin{align}
J_{\delta}(x) & \le c_{12}\int_{|x-y|\le c}|x-y|^{-d}\left[\psi\left(|x-y|\right)+\beta\left(|x-y|\right)\left|\ln|y-x|\right|\right]\mathrm{d}y\nonumber \\
 & \le c_{13}\int_{0}^{c}\frac{\psi(r)+\beta(r)|\ln r|}{r}\mathrm{d}r\overset{(\ref{eq: choice of c})}{\le}c_{13}\varepsilon.\label{eq: J, lemma 4.1}
\end{align}

(iii) For $y\in D_{3}$ and $0<t\le\delta\le1/2$, it follows from
(\ref{eq: esti, F_1}) and (\ref{eq: esti, F_2}) that
\[
|F(t,x,y)|\le c_{14}t^{-\left(\overline{\alpha}-\underline{\alpha}\right)/\underline{\alpha}}\left(1+|\ln t|\right)\left(1+|\ln|y-x||\right)\left[|y-x|^{-d-\underline{\alpha}}+|y-x|^{-d-\overline{\alpha}}\right].
\]
So

\begin{align}
H_{\delta}(x) & \le c_{14}\int_{|y-x|\ge c}\left(1+|\ln|y-x||\right)\left[|y-x|^{-d-\underline{\alpha}}+|y-x|^{-d-\overline{\alpha}}\right]\mathrm{d}y\nonumber \\
 & \qquad\times\int_{0}^{\delta}t^{-\left(\overline{\alpha}-\underline{\alpha}\right)/\underline{\alpha}}\left(1+|\ln t|\right)\mathrm{d}t\to0,\quad\mbox{as \ensuremath{\delta\to}0, }\label{eq: H, lemma 4.1}
\end{align}
where the convergence in (\ref{eq: H, lemma 4.1}) follows from the
assumption that $\overline{\alpha}<2\underline{\alpha}$.

We emphasize that the above constants $c_{1},\cdots,c_{13}$ depend
only on $d,\underline{\alpha},\overline{\alpha},\kappa_{1},\kappa_{2}$
and $\beta$. It follows from (\ref{eq: I, lemma 4.1}), (\ref{eq: J, lemma 4.1})
and (\ref{eq: H, lemma 4.1}) that 
\[
\limsup_{\delta\to0}\left(\sup_{x\in\Rd}\int_{0}^{\delta}\int_{\Rd}|F(t,x,y)\mathrm{|d}y\mathrm{d}t\right)\le c_{13}\varepsilon.
\]
Since $\varepsilon>0$ is arbitrary, the assertion follows. 
\end{proof}
Now, we can combine the estimates in Lemmas \ref{lem: esti delta >1}
and \ref{lem: delta to 0} to get the following. 
\begin{prop}
\label{prop: <12}Under the assumptions of Lemma \emph{\ref{lem: delta to 0}},
there exists $\lambda_{0}>0$ such that for all $\lambda\geq\lambda_{0}$,
\[
\sup_{x\in\Rd}\int_{0}^{\infty}\int_{\Rd}e^{-\lambda t}|F(t,x,y)\mathrm{|d}y\mathrm{d}t\le\frac{1}{2}.
\]
\end{prop}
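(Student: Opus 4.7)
The plan is to split the double integral at a carefully chosen time $\delta \in (0,1)$ and to treat the two regimes separately, using Lemma~\ref{lem: delta to 0} on $(0,\delta)$ and Lemma~\ref{lem: esti delta >1} on $[\delta,\infty)$. Concretely, for any $\lambda>0$ and any $\delta\in(0,1)$,
\begin{equation*}
\int_{0}^{\infty}\!\!\int_{\Rd} e^{-\lambda t}|F(t,x,y)|\,\mathrm{d}y\,\mathrm{d}t
\;\le\; \int_{0}^{\delta}\!\!\int_{\Rd}|F(t,x,y)|\,\mathrm{d}y\,\mathrm{d}t \;+\; \int_{\delta}^{\infty}\!\!\int_{\Rd} e^{-\lambda t}|F(t,x,y)|\,\mathrm{d}y\,\mathrm{d}t,
\end{equation*}
and the two terms will be handled by completely different mechanisms: the first by taking $\delta$ small, the second by taking $\lambda$ large.

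First I would fix $\delta_0\in(0,1)$ so that, by Lemma~\ref{lem: delta to 0},
\begin{equation*}
\sup_{x\in\Rd}\int_{0}^{\delta_0}\int_{\Rd}|F(t,x,y)|\,\mathrm{d}y\,\mathrm{d}t \;\le\; \tfrac{1}{4}.
\end{equation*}
This choice of $\delta_0$ is independent of $\lambda$ and uses only the assumption $\overline{\alpha}<2\underline{\alpha}$, which is available by hypothesis. Note that trivially $e^{-\lambda t}\le 1$, so the same bound $1/4$ controls the first term of the decomposition above uniformly in $x$ and in $\lambda\ge 0$.

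Next, with $\delta_0$ fixed, I would apply Lemma~\ref{lem: esti delta >1} to estimate the tail: for every $x\in\Rd$,
\begin{equation*}
\int_{\delta_0}^{\infty}\!\!\int_{\Rd} e^{-\lambda t}|F(t,x,y)|\,\mathrm{d}y\,\mathrm{d}t
\;\le\; C(\delta_0)\int_{\delta_0}^{\infty} e^{-\lambda t}(1+|\ln t|)\, t^{(d+2)/\underline{\alpha}}\,\mathrm{d}t,
\end{equation*}
where $C(\delta_0)$ depends only on $\delta_0,d,\underline{\alpha},\overline{\alpha},\kappa_1,\kappa_2$ and is independent of $x$. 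Since the integrand $(1+|\ln t|)t^{(d+2)/\underline{\alpha}}$ is of polynomial-times-logarithmic growth at infinity and integrable on $[\delta_0,1]$, the exponential factor $e^{-\lambda t}$ forces the right-hand side to tend to $0$ as $\lambda\to\infty$ by dominated convergence. Hence we may pick $\lambda_0>0$ large enough that this tail contribution is at most $1/4$ for every $\lambda\ge\lambda_0$.

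Adding the two bounds gives the desired $1/2$, uniformly in $x\in\Rd$. The only mild technical point, which is not really an obstacle, is to ensure that the constant $C(\delta_0)$ produced by Lemma~\ref{lem: esti delta >1} is genuinely independent of $x$, and that the auxiliary integral $\int_{\delta_0}^{\infty} e^{-\lambda t}(1+|\ln t|)t^{(d+2)/\underline{\alpha}}\mathrm{d}t$ is handled cleanly (e.g.\ by splitting it at $t=1$); the actual work has already been absorbed into Lemmas~\ref{lem: delta to 0} and \ref{lem: esti delta >1}, so the present proposition is essentially a two-parameter calibration ($\delta_0$ first, then $\lambda_0$).
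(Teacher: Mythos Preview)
Your proposal is correct and follows essentially the same approach as the paper: split the time integral at a small $\delta_0$ chosen via Lemma~\ref{lem: delta to 0} to make the short-time contribution at most $1/4$, then use Lemma~\ref{lem: esti delta >1} and let $\lambda\to\infty$ to drive the tail below $1/4$. The paper's proof is identical in structure and in the choice of the two lemmas.
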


\begin{proof}
According to Lemma \ref{lem: delta to 0}, there exists sufficiently
small $\delta_{0}>0$ such that 
\begin{equation}
\sup_{x\in\Rd}\int_{0}^{\delta_{0}}\int_{\Rd}e^{-\lambda t}|F(t,x,y)\mathrm{|d}y\mathrm{d}t<\frac{1}{4},\quad\mbox{for all \ensuremath{\lambda>0}}.\label{eq1: prop 3.6}
\end{equation}
By Lemma \ref{lem: esti delta >1}, there exists $c_{1}=c_{1}(\delta_{0},d,\underline{\alpha},\overline{\alpha},\kappa_{1},\kappa_{2})>0$
such that for all $x\in\Rd$ and $t\ge\delta_{0}$, 
\[
\int_{\Rd}|F(t,x,y)\mathrm{|d}y\le c_{1}\left(1+|\ln t|\right)t^{\left(d+2\right)/\underline{\alpha}}.
\]
So 
\begin{equation}
\sup_{x\in\Rd}\int_{\delta_{0}}^{\infty}\int_{\Rd}e^{-\lambda t}|F(t,x,y)\mathrm{|d}y\mathrm{d}t\le c_{1}\int_{\delta_{0}}^{\infty}e^{-\lambda t}\left(1+|\ln t|\right)t^{\left(d+2\right)/\underline{\alpha}}\mathrm{d}t,\label{eq2: prop 3.6}
\end{equation}
where the right-hand side converges to $0$ as $\lambda\to\infty$.
Now choose $\lambda_{0}>0$ so that 
\begin{equation}
c_{1}\int_{\delta_{0}}^{\infty}e^{-\lambda t}\left(1+|\ln t|\right)t^{\left(d+2\right)/\underline{\alpha}}\mathrm{d}t\le\frac{1}{4},\quad\lambda\geq\lambda_{0}.\label{eq3: prop 3.6}
\end{equation}
Combining (\ref{eq1: prop 3.6}), (\ref{eq2: prop 3.6}) and (\ref{eq3: prop 3.6})
gives the assertion. 
\end{proof}
We are now ready to prove the following special case of Theorem \ref{thm: main}. 
\begin{prop}
\label{prop:Let--be main}Let $\mathcal{A}$ be as in \emph{(\ref{eq: defi A})},
and suppose Assumption \emph{\ref{assu: main}} holds. Further, assume
that \eqref{eq: extral cond.} is true. Then for each $x\in\Rd$,
the martingale problem for the operator $\mathcal{A}$ starting at
$x$ has at most one solution.
\end{prop}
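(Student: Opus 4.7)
\medskip

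\noindent\textbf{Proof plan for Proposition \ref{prop:Let--be main}.} The strategy is the classical Stroock--Varadhan reduction to uniqueness of the resolvent, carried out via the parametrix $q(t,x,y)=f_{t}^{y}(y-x)$ introduced in \eqref{eq: defi q}. First I would recall that if $\mathbf{P}$ is any solution of the martingale problem for $\mathcal{A}$ starting at $x$, then for every $f\in C_{b}^{2}(\Rd)$ the martingale property gives
\[
f(x)=\mathbf{E}\left[\int_{0}^{\infty}e^{-\lambda t}(\lambda-\mathcal{A})f(X_{t})\,\mathrm{d}t\right]=R_{\lambda}(\lambda-\mathcal{A})f(x),\qquad\lambda>0,
\]
where $R_{\lambda}g(x):=\mathbf{E}[\int_{0}^{\infty}e^{-\lambda t}g(X_{t})\mathrm{d}t]$. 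By the standard argument (see e.g.\ \cite{MR2642351}), it suffices to show that for some $\lambda>0$ the range of $\lambda-\mathcal{A}$ acting on $C_{b}^{2}(\Rd)$ contains a dense subclass of $C_{b}(\Rd)$ (say $C_{c}^{\infty}$): indeed, the above identity then determines $R_{\lambda}g$ uniquely for all such $g$, hence determines all one-dimensional distributions of $X_{t}$ under $\mathbf{P}$, and uniqueness of the full martingale problem follows by the Stroock--Varadhan bootstrap.

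Next I would introduce the parametrix resolvent
\[
G_{\lambda}g(x):=\int_{0}^{\infty}\!\!\int_{\Rd}e^{-\lambda t}q(t,x,y)g(y)\,\mathrm{d}y\,\mathrm{d}t,
\]
and, using $\partial_{t}f_{t}^{y}=\mathcal{A}^{y}f_{t}^{y}$ together with integration by parts in $t$ and the fact that $\int q(t,x,y)g(y)\mathrm{d}y\to g(x)$ as $t\downarrow0$, derive the key identity
\[
(\lambda-\mathcal{A})G_{\lambda}g(x)=g(x)-S_{\lambda}g(x),\qquad S_{\lambda}g(x):=\int_{0}^{\infty}\!\!\int_{\Rd}e^{-\lambda t}F(t,x,y)g(y)\,\mathrm{d}y\,\mathrm{d}t,
\]
with $F$ as in \eqref{eq: defi F}. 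Proposition \ref{prop: <12} (which uses precisely the hypothesis $\overline{\alpha}<2\underline{\alpha}$ through Lemma \ref{lem: delta to 0}) shows $\|S_{\lambda}\|_{L^{\infty}\to L^{\infty}}\le 1/2$ for all $\lambda\ge\lambda_{0}$. Hence $I-S_{\lambda}$ is invertible on $C_{b}(\Rd)$ by Neumann series, with operator norm at most $2$. For $g\in C_{c}^{\infty}(\Rd)$, setting $h:=(I-S_{\lambda})^{-1}g$ and $f:=G_{\lambda}h$, the identity above gives $(\lambda-\mathcal{A})f=h-S_{\lambda}h=g$, and then $R_{\lambda}g(x)=f(x)=G_{\lambda}(I-S_{\lambda})^{-1}g(x)$. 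Since the right-hand side is expressed purely in terms of $\mathcal{A}$ (no reference to $\mathbf{P}$), the resolvent is uniquely determined, and uniqueness of the martingale problem follows as in the first paragraph.

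The main obstacle is not the contraction estimate --- that is supplied by Proposition \ref{prop: <12} --- but rather the technical verification that the candidate $f=G_{\lambda}(I-S_{\lambda})^{-1}g$ actually lies in $C_{b}^{2}(\Rd)$ and that the formal manipulation $(\lambda-\mathcal{A})G_{\lambda}g=g-S_{\lambda}g$ is rigorous pointwise for $x\in\Rd$. To establish this I would differentiate under the integral using the gradient and second-difference bounds of Lemma \ref{cor:For-each-k} and the remark preceding Lemma \ref{lem:frac esti f_t-1}, together with the convolution estimates of Lemmas \ref{lem: conv ineq of chen}--\ref{lem 1: conv different alpha}; Proposition \ref{lem: integral f_t^y - f_t^x} supplies continuity at $t=0$ of the boundary term coming from $\int q(t,x,y)g(y)\mathrm{d}y-g(x)$. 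A secondary subtlety is the interchange of $\mathcal{A}$ with the $(t,y)$-integrals in $\mathcal{A}G_{\lambda}g$, for which one splits into $|h|\le 1$ and $|h|>1$ and uses dominated convergence with the bounds from Section~3.

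Once all these ingredients are in place, the uniqueness of $R_{\lambda}$ on $C_{c}^{\infty}$ (hence on $C_{b}$ by density and boundedness of $R_{\lambda}$) gives uniqueness of the one-dimensional marginals under any solution, and the Stroock--Varadhan conditioning argument upgrades this to uniqueness in law on path space, completing the proof of Proposition \ref{prop:Let--be main}.
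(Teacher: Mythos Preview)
Your overall strategy --- reducing to uniqueness of $R_{\lambda}$ via the parametrix identity and the contraction estimate from Proposition~\ref{prop: <12} --- is the same as the paper's, but your execution has a genuine gap at the point you yourself flag as the ``main obstacle.'' You need the candidate $f=G_{\lambda}(I-S_{\lambda})^{-1}g$ to lie in $C_{b}^{2}(\Rd)$ in order to feed it into the martingale identity. The bounds in Lemma~\ref{cor:For-each-k} give $|\nabla_{x}^{2}q(t,x,y)|\le Ct^{-2/\alpha(y)}\varrho_{\alpha(y)}^{0,\alpha(y)}(t,y-x)$, and since $2/\alpha(y)>1$ this is \emph{not} integrable in $t$ near $0$ after integrating in $y$; differentiating under the integral therefore fails. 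Even if one could rescue $G_{\lambda}g\in C_{b}^{2}$ for $g\in C_{c}^{\infty}$ by exploiting cancellation, you would still need it for $h=(I-S_{\lambda})^{-1}g$, which is merely in $C_{b}(\Rd)$ and carries no extra regularity or decay. In short, the Neumann-series construction of the exact resolvent cannot be pushed through on $C_{b}^{2}$ test functions with the tools available here.

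The paper (following \cite{MR2642351}) sidesteps this entirely. Rather than solving $(\lambda-\mathcal{A})f=g$, it works with the \emph{difference} $S_{\lambda}^{\triangle}:=S_{\lambda}^{1}-S_{\lambda}^{2}$ of the resolvents of two putative solutions, which annihilates $(\lambda-\mathcal{A})C_{b}^{2}$. It then plugs in the \emph{truncated} functions
\[
g_{\varepsilon}(x)=\int_{\varepsilon}^{\infty}\!\!\int_{\Rd}e^{-\lambda t}q(t,x,y)g(y)\,\mathrm{d}y\,\mathrm{d}t,
\]
which are manifestly in $C_{b}^{2}(\Rd)$ because $t\ge\varepsilon$ keeps the kernel uniformly smooth. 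One computes $(\lambda-\mathcal{A})g_{\varepsilon}=I_{\varepsilon}+J_{\varepsilon}$ with $I_{\varepsilon}(x)\to g(x)$ (this is where Proposition~\ref{lem: integral f_t^y - f_t^x} enters) and $\|J_{\varepsilon}\|\le\tfrac{1}{2}\|g\|$ uniformly in $\varepsilon$. Since $S_{\lambda}^{\triangle}$ is a bounded linear functional, dominated convergence lets one pass to the limit $\varepsilon\to0$ \emph{inside} $S_{\lambda}^{\triangle}$, yielding $|S_{\lambda}^{\triangle}g|\le\tfrac{1}{2}\Theta\|g\|$ with $\Theta:=\sup_{\|g\|\le1}|S_{\lambda}^{\triangle}g|$, hence $\Theta=0$. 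No $C^{2}$ regularity of the full parametrix resolvent, and no inversion of $I-S_{\lambda}$, is ever needed.
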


\begin{proof}
In view of Propositions \ref{lem: integral f_t^y - f_t^x} and \ref{prop: <12},
the same proof as in \cite[Section 3]{MR2642351} applies also to
our case. However, for the reader's convenience, we spell out the
details here. 

Suppose $\mathbf{P}_{1},\mathbf{P}_{2}$ are two solutions to the
martingale problem for $\mathcal{A}$ started at a point $x_{0}\in\Rd$.
For $\varphi\in C_{b}(\Rd)$, define 
\[
S_{\lambda}^{i}\varphi:=\mathrm{\mathbf{E}}_{i}\int_{0}^{\infty}e^{-\lambda t}\varphi(X_{t})\mathrm{d}t,\ i=1,2,
\]
and 
\[
S_{\lambda}^{\triangle}\varphi:=S_{\lambda}^{1}\varphi-S_{\lambda}^{2}\varphi.
\]
It's easy to see that
\[
\Theta:=\sup_{\Vert\varphi\Vert\leq1,\varphi\in C_{b}(\Rd)}|S_{\lambda}^{\triangle}\varphi|<\infty.
\]

By the definition of the martingale problem, we have that for $\varphi\in C_{b}^{2}(\Rd),$
\begin{equation}
\mathbf{E}_{i}\varphi(X_{t})-\varphi(x_{0})=\mathbf{E}_{i}\int_{0}^{t}\mathcal{A}\varphi(X_{s})\mathrm{d}s,\ i=1,2.\label{eq 1: proof of main}
\end{equation}
It follows from (\ref{eq 1: proof of main}) and Fubini's theorem
that 
\begin{align*}
\mathbf{E}_{i}\int_{0}^{\infty}e^{-\lambda t}\varphi(X_{t})\mathrm{d}t & =\lambda^{-1}\varphi(x_{0})+\mathbf{E}_{i}\left[\int_{0}^{\infty}e^{-\lambda t}\int_{0}^{t}\mathcal{A}\varphi(X_{s})\mathrm{d}s\mathrm{d}t\right]\\
 & =\lambda^{-1}\varphi(x_{0})+\lambda^{-1}\mathbf{E}_{i}\int_{0}^{\infty}e^{-\lambda t}\mathcal{A}\varphi(X_{t})\mathrm{d}t,
\end{align*}
or
\[
\varphi(x_{0})=S_{\lambda}^{i}(\lambda\varphi-\mathcal{A}\varphi)\ ,\ i=1,2.
\]
So

\begin{equation}
S_{\lambda}^{\triangle}(\lambda\varphi-\mathcal{A}\varphi)=0,\quad\varphi\in C_{b}^{2}(\Rd).\label{eq: S^delta (lambda - A)=00003D 0}
\end{equation}

Let $g$ be a $C^{2}$ function with compact support and let
\[
g_{\varepsilon}(x):=\int_{\varepsilon}^{\infty}\int_{\Rd}e^{-\lambda t}q(t,x,y)g(y)\mathrm{d}y\mathrm{d}t,\quad x\in\Rd,
\]
where $q(t,x,y)=f_{t}^{y}(y-x)$ is defined in (\ref{eq: defi q}).
\textcolor{black}{By (\ref{bound:gradientg_t-1}), we see that}\textcolor{red}{{}
${\color{black}g_{\varepsilon}\in C_{b}^{2}(\Rd)}$}. We have
\begin{align*}
(\lambda\text{\textminus}\mathcal{A})g_{\varepsilon} & (x)=(\lambda\text{\textminus}\mathcal{A})\left(\int_{\varepsilon}^{\infty}\int_{\Rd}e^{-\lambda t}q(t,x,y)g(y)\mathrm{d}y\mathrm{d}t\right)\\
 & =\int_{\varepsilon}^{\infty}\int_{\Rd}e^{-\lambda t}\left[(\lambda\text{\textminus}\mathcal{A})q(t,\cdot,y)\right](x)g(y)\mathrm{d}y\mathrm{d}t\\
 & =\int_{\varepsilon}^{\infty}\int_{\Rd}e^{-\lambda t}\left[(\lambda\text{\textminus}\mathcal{A}^{y})q(t,\cdot,y)\right](x)g(y)\mathrm{d}y\mathrm{d}t\\
 & \qquad+\int_{\varepsilon}^{\infty}\int_{\Rd}e^{-\lambda t}\left[(\mathcal{A}^{y}-\mathcal{A})q(t,\cdot,y)\right](x)g(y)\mathrm{d}y\mathrm{d}t\\
 & \quad=:I_{\varepsilon}(x)+J_{\varepsilon}(x).
\end{align*}
Since $\partial_{t}q(t,x,y)=\partial_{t}\left(f_{t}^{y}(y-x)\right)=\mathcal{A}^{y}\left(q(t,\cdot,y)\right)(x)$
for $t>0$ and $x,y\in\Rd$, by Fubini and integration by parts, we
get 
\begin{equation}
I_{\varepsilon}(x)=\int_{\Rd}\left(\int_{\varepsilon}^{\infty}e^{-\lambda t}\left[(\lambda\text{\textminus}\mathcal{A}^{y})q(t,\cdot,y)\right](x)\mathrm{d}t\right)g(y)\mathrm{d}y=\int_{\Rd}e^{-\lambda\varepsilon}q(\varepsilon,x,y)g(y)\mathrm{d}y.\label{eq: 1.2, proof main}
\end{equation}

We now show that $I_{\varepsilon}(x)$ goes to $g(x)$ as $\varepsilon\to0$.
Let $k\in\BN$. We can choose $\delta\in(0,1)$ small enough so that
\begin{equation}
\sup_{|x-y|\le\delta}|g(x)-g(y)|\le\frac{1}{k}.\label{eq: 1.5, proof main}
\end{equation}
For $0<t\le1$ and $z\in\Rd$, we have 
\begin{align*}
\int_{|y-x|>\delta}f_{t}^{z}(y-x)\mathrm{d}y & \overset{(\ref{bound:gradientg_t-1})}{\le}c_{1}\int_{\delta<|y-x|\le1}\frac{t}{|y-x|^{d+\overline{\alpha}}}\mathrm{d}y+c_{1}\int_{|y-x|>1}\frac{t}{|y-x|^{d+\underline{\alpha}}}\mathrm{d}y\\
 & \ \le c_{2}t\left(\int_{\delta}^{1}r^{-1-\overline{\alpha}}\mathrm{d}r+\int_{1}^{\infty}r^{-1-\underline{\alpha}}\mathrm{d}r\right).
\end{align*}
It follows that there exists $t_{0}>0$ such that 
\begin{equation}
\int_{|y-x|>\delta}f_{t}^{z}(y-x)\mathrm{d}y<\frac{1}{k},\quad\mbox{for all \ensuremath{t\le t_{0}}\ and \ensuremath{x,z\in\Rd.} }\label{eq2: proof main}
\end{equation}
So, for $\varepsilon<t_{0}$,
\begin{align}
 & \left|\int_{\Rd}q(\varepsilon,x,y)g(y)\mathrm{d}y-g(x)\right|\nonumber \\
 & \quad=\left|\int_{\Rd}f_{\varepsilon}^{y}(y-x)g(y)\mathrm{d}y-g(x)\int_{\Rd}f_{\varepsilon}^{x}(y-x)\mathrm{d}y\right|\nonumber \\
 & \quad\overset{(\ref{eq2: proof main})}{\le}\frac{2\|g\|}{k}+\left|\int_{|y-x|\le\delta}f_{\varepsilon}^{y}(y-x)\left[g(y)-g(x)\right]\mathrm{d}y\right|\nonumber \\
 & \qquad+\left|\int_{|y-x|\le\delta}f_{\varepsilon}^{y}(y-x)g(x)\mathrm{d}y-g(x)\int_{|y-x|\le\delta}f_{\varepsilon}^{x}(y-x)\mathrm{d}y\right|\nonumber \\
 & \quad\overset{(\ref{eq: 1.5, proof main})}{\le}\frac{2\|g\|}{k}+c_{3}k^{-1}\int_{\Rd}\varrho_{\alpha(y)}^{0,\alpha(y)}(\varepsilon,y-x)\mathrm{d}y\nonumber \\
 & \qquad+\|g\|\int_{|y-x|\le1}\left|f_{\varepsilon}^{y}(y-x)-f_{\varepsilon}^{x}(y-x)\right|\mathrm{d}y,\label{eq:3, proof main}
\end{align}
where $c_{3}>0$ is a constant depending only on $d,\underline{\alpha},\overline{\alpha},\kappa_{1},\kappa_{2}$.
By Proposition \ref{lem: integral f_t^y - f_t^x}, the term in (\ref{eq:3, proof main})
converges to $0$ as $\varepsilon\to0$. Therefore,
\begin{align*}
 & \limsup_{\varepsilon\to0}\left|\int_{\Rd}q(\varepsilon,x,y)g(y)\mathrm{d}y-g(x)\right|\\
 & \quad\le\frac{2\|g\|}{k}+c_{3}k^{-1}\limsup_{\varepsilon\to0}\int_{\Rd}\varrho_{\alpha(y)}^{0,\alpha(y)}(\varepsilon,y-x)\mathrm{d}y\overset{(\ref{esti1:rho})}{\le}\frac{2\|g\|}{k}+c_{4}k^{-1}.
\end{align*}
Here $c_{4}>0$ is also a constant depending only on $d,\underline{\alpha},\overline{\alpha},\kappa_{1},\kappa_{2}$.
Letting $k\to\infty$ yields
\[
\lim_{\varepsilon\to0}\int_{\Rd}q(\varepsilon,x,y)g(y)\mathrm{d}y=g(x),\quad x\in\Rd.
\]
In view of (\ref{eq: 1.2, proof main}), it is clear that $\lim_{\varepsilon\to0}I_{\varepsilon}(x)=g(x)$. 

According to Proposition \ref{prop: <12}, there exists $\lambda_{0}>0$
such that for all $\lambda\geq\lambda_{0}$ and $x\in\Rd$,
\[
|J_{\varepsilon}(x)|\le\Vert g\Vert\sup_{x\in\Rd}\int_{0}^{\infty}\int_{\Rd}e^{-\lambda t}|F(t,x,y)\mathrm{|d}y\mathrm{d}t\leq\frac{1}{2}\Vert g\Vert.
\]
Moreover, by (\ref{bound:gradientg_t-1}) and the dominated convergence
theorem, we can easily verify that $J_{\varepsilon}(x)$ is continuous
in $x$. So $J_{\varepsilon}\in C_{b}(\Rd)$ if $\lambda\geq\lambda_{0}$.

Let $\lambda\ge\lambda_{0}$. Since $S_{\lambda}^{\triangle}(\lambda-\mathcal{A})g_{\varepsilon}=0$
by (\ref{eq: S^delta (lambda - A)=00003D 0}), we have $|S_{\lambda}^{\triangle}I_{\varepsilon}|=|S_{\lambda}^{\triangle}J_{\varepsilon}|$.
Letting $\varepsilon\rightarrow0$ and applying the dominated convergence
theorem, we obtain
\begin{equation}
|S_{\lambda}^{\triangle}g|=\lim_{\varepsilon\rightarrow0}|S_{\lambda}^{\triangle}I_{\varepsilon}|=\lim_{\varepsilon\rightarrow0}|S_{\lambda}^{\triangle}J_{\varepsilon}|\leq\Theta\limsup_{\varepsilon\rightarrow0}\Vert J_{\varepsilon}\Vert\leq\frac{1}{2}\Theta\Vert g\Vert.\label{eq: bound for difference of resolvents}
\end{equation}

We now proceed to extend the above inequality to all $g\in C_{b}(\Rd)$.
First assume $g\in C_{b}(\Rd)$ and $g$ has compact support. If $\left\{ \phi_{\epsilon}\right\} $
is a mollifier sequence, then $g_{\epsilon}:=g\ast\phi_{\epsilon}\in C_{c}^{\infty}(\Rd)$
and thus 
\[
|S_{\lambda}^{\triangle}g_{\epsilon}|\leq\frac{1}{2}\Theta\Vert g_{\epsilon}\Vert\le\frac{1}{2}\Theta\Vert g\Vert.
\]
Passing to the limit as $\epsilon\to0$, we obtain \eqref{eq: bound for difference of resolvents}
by the dominated convergence theorem. Now, take a general $g\in C_{b}(\Rd)$
and let $\varphi\in C_{c}^{\infty}(\Rd)$ be such that $\mathbf{1}_{\left\{ |x|\le1\right\} }\le\varphi\le\mathbf{1}_{\left\{ |x|\le2\right\} }$.
Define $(\varphi_{j})_{j\ge1}\subset C_{c}^{\infty}(\Rd)$ by $\varphi_{j}(y):=\varphi(y/j)$.
By the dominated convergence theorem and the result we just obtained
in the previous step, we get 
\[
|S_{\lambda}^{\triangle}g|=\lim_{j\to\infty}|S_{\lambda}^{\triangle}\left(\varphi_{j}g\right)|\leq\frac{1}{2}\Theta\limsup_{j\to\infty}\Vert\varphi_{j}g\Vert\le\frac{1}{2}\Theta\Vert g\Vert.
\]
So \eqref{eq: bound for difference of resolvents} holds for all $g\in C_{b}(\Rd)$,
which implies
\[
\Theta=\sup_{\Vert g\Vert\leq1,g\in C_{b}(\Rd)}|S_{\lambda}^{\triangle}g|\le\frac{1}{2}\Theta.
\]
Since $\Theta<\infty$, it follows that $\Theta=0,$ or equivalently,
\begin{equation}
\mathrm{\mathbf{E}}_{1}\int_{0}^{\infty}e^{-\lambda t}f(X_{t})\mathrm{d}t=\mathrm{\mathbf{E}}_{2}\int_{0}^{\infty}e^{-\lambda t}f(X_{t})\mathrm{d}t,\quad f\in C_{b}(\Rd).\label{eq: laplace tran, proof main}
\end{equation}
Note that (\ref{eq: laplace tran, proof main}) holds for all $\lambda\ge\lambda_{0}$.
By the uniqueness of the Laplace transform and the right continuity
of $t\mapsto\mathrm{\mathbf{E}}_{i}f(X_{t})$, we obtain $\mathrm{\mathbf{E}}_{1}f(X_{t})=\mathrm{\mathbf{E}}_{2}f(X_{t})$
for all $t\ge0$ and $f\in C_{b}(\Rd)$. This says that the one-dimensional
distributions of any two solutions to the martingale problem agree.
As well-known, this already implies uniqueness for the martingale
problem (see \cite{MR2190038} for details). The proposition is proved. 
\end{proof}

\section{Proof of Theorem \ref{thm: main}}

In this section we will prove Theorem \ref{thm: main}. The main task
is to remove the condition $\overline{\alpha}<2\underline{\alpha}$
that we assumed in the last section. This can be achieved by the standard
localization procedure.

Due to Assumption \ref{assu: main}(d), there exists a constant $0<\delta<1$
such that 
\begin{equation}
|\alpha(x)-\alpha(y)|\le5^{-1}\underline{\alpha},\quad\mbox{whenever \ensuremath{|x-y|\le\delta.}}\label{eq: chioce of delta}
\end{equation}
Let $B_{\delta}(x):=\{y:|y-x|<\delta\}$ and $\overline{B_{\delta}(x)}:=\{y:|y-x|\le\delta\}$.
Note that \eqref{eq: chioce of delta} implies that for each $x\in\Rd$,
\begin{equation}
\sup_{y\in\overline{B_{\delta}(x)}}\alpha(y)\le\alpha(x)+5^{-1}\underline{\alpha}\le\frac{3}{2}\left(\alpha(x)-5^{-1}\underline{\alpha}\right)\le\frac{3}{2}\inf_{y\in\overline{B_{\delta}(x)}}\alpha(y).\label{eq: local extra cond.}
\end{equation}

We first establish the local uniqueness.
\begin{lem}
\label{lem:Let-.-Suppose last}Let $x\in\Rd$. Suppose $\mathbf{P}^{x}$
and $\mathbf{Q}^{x}$ are solutions to the martingale problem for
$\mathcal{A}$ starting from $x$. Define $\tau_{1}:=\inf\left\{ t\ge0:\ X_{t}\notin B_{\delta}(X_{0})\right\} $,
where $\delta$ is as in \eqref{eq: chioce of delta}. Then 
\begin{equation}
\mathbf{P}^{x}(B)=\mathbf{Q}^{x}(B),\quad\forall B\in\mathcal{F}_{\tau_{1}}.\label{eq: to be proved local lemma}
\end{equation}
\end{lem}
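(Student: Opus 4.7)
The plan is to reduce the claim to the special case established in Proposition~\ref{prop:Let--be main} via the classical Stroock--Varadhan localization. The key observation is that \eqref{eq: local extra cond.} gives
\[
\sup_{y\in\overline{B_\delta(x)}}\alpha(y)\le \tfrac{3}{2}\inf_{y\in\overline{B_\delta(x)}}\alpha(y),
\]
so if I modify the coefficients outside $\overline{B_\delta(x)}$ while keeping the range of $\alpha$ inside $\big[\inf_{\overline{B_\delta(x)}}\alpha,\sup_{\overline{B_\delta(x)}}\alpha\big]$, the resulting operator will automatically satisfy the stronger condition $\tilde{\overline\alpha}<2\tilde{\underline\alpha}$ used in Proposition~\ref{prop:Let--be main}. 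Concretely, let $\Pi:\Rd\to\overline{B_\delta(x)}$ denote the nearest-point projection (which is $1$-Lipschitz) and set
\[
\tilde n(y,h):=n(\Pi(y),h),\qquad \tilde\alpha(y):=\alpha(\Pi(y)).
\]
Since $|\Pi(y)-\Pi(z)|\le|y-z|$, the moduli of continuity of $\tilde n$ and $\tilde\alpha$ are dominated by the original $\psi$ and $\beta$, so Assumption~\ref{assu: main} transfers to $(\tilde n,\tilde\alpha)$. Let $\tilde{\mathcal A}$ be the operator built from $\tilde n,\tilde\alpha$ as in \eqref{eq: defi A}.

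For $y\in\overline{B_\delta(x)}$ and $f\in C_b^2(\Rd)$ we have $\tilde{\mathcal A}f(y)=\mathcal Af(y)$, since the two jump kernels at $y$ coincide. Consequently, under either of $\mathbf P^x$ or $\mathbf Q^x$, the process
\[
f(X_{t\wedge\tau_1})-\int_0^{t\wedge\tau_1}\tilde{\mathcal A}f(X_u)\,\mathrm{d}u,\qquad t\ge0,
\]
is an $(\mathcal F_t)$-martingale for every $f\in C_b^2(\Rd)$. Using regular conditional distributions on Skorokhod space (see, e.g., \cite{MR2190038}), I then paste onto each of $\mathbf P^x$ and $\mathbf Q^x$, starting at time $\tau_1$ from the location $X_{\tau_1}$, an arbitrarily but measurably chosen solution of the martingale problem for $\tilde{\mathcal A}$; existence of such solutions is ensured by \cite[Theorem 2.2]{MR0433614} applied to $\tilde{\mathcal A}$. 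This yields probability measures $\tilde{\mathbf P}^x,\tilde{\mathbf Q}^x$ that are solutions of the martingale problem for $\tilde{\mathcal A}$ starting at $x$ and that agree with $\mathbf P^x,\mathbf Q^x$ respectively on $\mathcal F_{\tau_1}$.

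Since $\tilde{\overline\alpha}<2\tilde{\underline\alpha}$, Proposition~\ref{prop:Let--be main} applies to $\tilde{\mathcal A}$ and gives $\tilde{\mathbf P}^x=\tilde{\mathbf Q}^x$, which immediately yields \eqref{eq: to be proved local lemma}. The main technical points I anticipate are, first, verifying that the projection-based extension preserves the Dini-type conditions in Assumption~\ref{assu: main}(b) and (d); this reduces to the $1$-Lipschitz property of $\Pi$ and is routine, but must be done carefully because the coefficients appear both in the exponent of $|h|$ and in the weight $n$. The second obstacle is the pasting construction on $D([0,\infty);\Rd)$, which is classical but requires a measurable selection of solutions to the martingale problem for $\tilde{\mathcal A}$ as well as the standard concatenation lemma for martingale problems; both are well-established, so I would simply invoke them.
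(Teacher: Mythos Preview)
Your proof is correct and follows the same localization strategy as the paper: modify the coefficients outside $\overline{B_\delta(x)}$ so that the new operator $\tilde{\mathcal A}$ agrees with $\mathcal A$ on $\overline{B_\delta(x)}$ and satisfies $\tilde{\overline\alpha}<2\tilde{\underline\alpha}$, apply Proposition~\ref{prop:Let--be main} to $\tilde{\mathcal A}$, and then paste to transfer uniqueness back to $\mathbf P^x,\mathbf Q^x$ on $\mathcal F_{\tau_1}$.

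The one noteworthy difference is the extension device. The paper keeps $n$ unchanged and extends $\alpha$ by a reflection map $T$ across the sphere $\partial B_\delta(x)$, which is Lipschitz with some constant $c_1>1$; this forces the extra step of checking that $\tilde\beta(r)\le\beta(c_1 r)$ still satisfies Assumption~\ref{assu: main}(d). Your choice of the metric projection $\Pi$ onto $\overline{B_\delta(x)}$ is cleaner: it is $1$-Lipschitz, so the moduli of continuity of $\tilde\alpha$ and $\tilde n$ are dominated pointwise by $\beta$ and $\psi$ themselves, and no rescaling of the Dini integrals is needed. Modifying $n$ as well is harmless (and in fact unnecessary, since $n$ already satisfies Assumption~\ref{assu: main}(a),(b) globally). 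The pasting step is handled identically in both arguments; note that since Proposition~\ref{prop:Let--be main} already gives \emph{uniqueness} for $\tilde{\mathcal A}$, the measurable selection you mention comes for free from well-posedness (as the paper notes via \cite[Chap.~4, Theorem~4.6]{MR838085}), so no genuine selection theorem is required.
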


\begin{proof}
Define a map $T:\overline{B_{2\delta}(x)}\setminus B_{\delta}(x)\to\overline{B_{\delta}(x)}$
by 
\[
T(y)=x+\frac{\left(y-x\right)(2\delta-|y-x|)}{|y-x|},\quad y\in\overline{B_{2\delta}(x)}\setminus B_{\delta}(x).
\]
Not to be precise, $T$ is the mirror image map from $\overline{B_{2\delta}(x)}\setminus B_{\delta}(x)$
to $\overline{B_{\delta}(x)}$ with respect to the sphere surface
$\{z:|z-x|=\delta\}$. It is easy to see that $T$ is Lipschitz continuous,
namely, there exists a\textcolor{red}{{} }constant $c_{1}>1$ such that
\begin{equation}
|T(y)-T(y')|\le c_{1}|y-y'|,\quad y,y'\in\overline{B_{2\delta}(x)}\setminus B_{\delta}(x).\label{eq: prop 1 for T}
\end{equation}
Note also that if $z\in B_{\delta}(x)$, then 
\begin{equation}
|z-T(y)|\le|z-y|,\quad\forall y\in\overline{B_{2\delta}(x)}\setminus B_{\delta}(x).\label{eq: prop 2 for T}
\end{equation}

Based on $\mathcal{A}$, we define a new operator $\tilde{\mathcal{A}}$
by modifying the values of $\alpha(y)$ for $y\notin\overline{B_{\delta}(x)}$,
namely, 
\[
\mathcal{\tilde{A}}f(y)=\int_{\mathbb{R}^{d}\backslash\{0\}}\left[f(y+h)-f(y)-\mathbf{1}_{\{|h|\le1\}}h\cdot\nabla f(y)\right]\frac{n(y,h)}{|h|^{d+\tilde{\alpha}(y)}}\mathrm{d}h,\quad y\in\Rd,
\]
where 
\[
\tilde{\alpha}(y):=\begin{cases}
\alpha(y), & y\in\overline{B_{\delta}(x)},\\
\alpha\left(T(y)\right), & y\in\overline{B_{2\delta}(x)}\\
\alpha(x), & y\notin\overline{B_{2\delta}(x)}.
\end{cases}\setminus\overline{B_{\delta}(x)},
\]
It follows from \eqref{eq: local extra cond.} that \emph{
\[
\sup_{y\in\Rd}\tilde{\alpha}(y)\le\frac{3}{2}\inf_{y\in\Rd}\tilde{\alpha}(y).
\]
}

We now verify that $\mathcal{\tilde{A}}$ satisfies Assumption\emph{
}\ref{assu: main}. In fact, we only need to check that $\tilde{\beta}(r)=o(|\ln r|^{-1})$
as $r\to0$ and 
\[
\int_{0}^{1}r^{-1}|\ln r|\tilde{\beta}(r)\mathrm{d}r<\infty,
\]
where $\tilde{\beta}(r):=\sup_{|x-y|\le r}|\tilde{\alpha}(x)-\tilde{\alpha}(y)|$.
To verify these two conditions, it suffices to show
\begin{equation}
\tilde{\beta}(r)\le\beta(c_{1}r),\quad\forall r>0.\label{eq: condi for beta tilde}
\end{equation}
For $y,y'\in\overline{B_{2\delta}(x)}\setminus B_{\delta}(x)$, we
have 
\[
|\tilde{\alpha}(y)-\tilde{\alpha}(y')|=|\alpha(T(y))-\alpha(T(y'))|\le\beta(|T(y)-T(y')|)\overset{(\ref{eq: prop 1 for T})}{\le}\beta(c_{1}|y-y'|).
\]
For $y\in B_{\delta}(x)$ and $y'\in\overline{B_{2\delta}(x)}\setminus B_{\delta}(x)$,
we have 
\[
|\tilde{\alpha}(y)-\tilde{\alpha}(y')|=|\alpha(y)-\alpha(T(y'))|\overset{(\ref{eq: prop 2 for T})}{\le}\beta(|y-y'|)\le\beta(c_{1}|y-y'|).
\]
The case for $y\in\overline{B_{2\delta}(x)}$ and $y'\notin\overline{B_{2\delta}(x)}$
is similar. Altogether, we see that (\ref{eq: condi for beta tilde})
is true. So Assumption\emph{ }\ref{assu: main} holds true for $\mathcal{\tilde{A}}$.

In view of Proposition \ref{prop:Let--be main}, the martingale problem
for $\tilde{\mathcal{A}}$ is well-posed. Let $\mathbf{L}^{y}$ be
the solution to the martingale problem for $\mathcal{\tilde{A}}$
starting from $y\in\Rd$. According to \cite[Chap.4,~Theorem 4.6]{MR838085}
(see also \cite[Exercise 6.7.4]{MR2190038}), the mapping $y\mapsto\mathbf{L}^{y}$
is measurable. Now define $\mathbf{\tilde{P}}^{x}$ by 
\[
\mathbf{\tilde{P}}^{x}\left(B\cap\left(C\circ\theta_{\tau_{1}}\right)\right)=\mathbf{E}_{\mathbf{P}^{x}}\left[\mathbf{L}^{X_{\tau_{1}}}(C);B\right],\quad B\in\mathcal{F}_{\tau_{1}},C\in\mathcal{D},
\]
where $\theta_{t}$ are the usual shift operators on $D=D\big([0,\infty);\Rd\big)$.
Let $\mathbf{\tilde{Q}}^{x}$ be defined in the same way. Then it
is routine to check that $\mathbf{\tilde{P}}^{x}$ and $\mathbf{\tilde{Q}}^{x}$
are solutions to the martingale problem for $\mathcal{\tilde{A}}$
starting from $x$. So $\mathbf{\tilde{P}}^{x}=\mathbf{\tilde{Q}}^{x}$
by Proposition \ref{prop:Let--be main}. By the definition of $\mathbf{\tilde{P}}^{x}$
and $\mathbf{\tilde{Q}}^{x}$, we obtain \eqref{eq: to be proved local lemma}.
The lemma is proved.
\end{proof}
Finally, we give the proof of our main result.

\emph{Proof of Theorem} \ref{thm: main}. Let $x\in\Rd$. Suppose
$\mathbf{P}^{x}$ and $\mathbf{Q}^{x}$ are solutions to the martingale
problem for $\mathcal{A}$ starting from $x\in\Rd$.

Let $\delta$ and $\tau_{1}$ be as in Lemma \ref{lem:Let-.-Suppose last}.
Define inductively 
\[
\tau_{i+1}:=\{t\ge\tau_{i}:X_{t}\notin B_{\delta}(X_{\tau_{i}})\}.
\]
In view of Lemma \ref{lem:Let-.-Suppose last}, we can use standard
argument (see, for instance, \cite[Section 6.3,~Theorem 3.4]{MR1483890})
to conclude that 
\[
\mathbf{P}^{x}(B)=\mathbf{Q}^{x}(B),\quad\forall B\in\mathcal{F}_{\tau_{n}},n\in\mathbb{N}.
\]
To see $\mathbf{P}^{x}=\mathbf{Q}^{x}$, it remains to show that $\mathbf{P}^{x}(\tau_{n}\to\infty)=\mathbf{Q}^{x}(\tau_{n}\to\infty)=1$. 

Let $\sigma_{r}:=\inf\left\{ t\ge0:X_{t}\notin B_{r}(X_{0})\right\} $.
Keeping in mind the symmetry property $n(x,h)=n(x,-h)$, we can repeat
the proof of \cite[Proposition 3.1]{MR2095633} to find a constant
$c_{1}>0$ such that for all $y\in\Rd$ and $0<r<1$, 
\[
\mathbf{P}^{y}(\sigma_{r}\le c_{1}r^{\overline{\alpha}})\le\frac{1}{2},
\]
where $\mathbf{P}^{y}$ is any solution to the martingale problem
for $\mathcal{A}$ starting from $y$. In particular, we have 
\[
\mathbf{P}^{y}(\tau_{1}\le\epsilon)\le\frac{1}{2},\quad\forall y\in\Rd,
\]
where $\epsilon>0$ is some constant not depending on $y$. As shown
in the proof of \cite[Corollary 4.4]{MR1964949}, this implies, for
some constant $\gamma$, 
\[
\mathbf{E}_{\mathbf{P}^{y}}(e^{-\tau_{1}})\le\gamma<1,\quad\forall y\in\Rd.
\]
Using the strong Markov property, we get
\[
\mathbf{E}_{\mathbf{P}^{x}}(e^{-\tau_{n}})\le\gamma^{n}\to0,\quad\mbox{as \ensuremath{n\to\infty}, }
\]
which implies $\tau_{n}\to\infty$ $\mathbf{P}^{x}$-a.s. The same
statement holds also for $\mathbf{Q}^{x}$. So $\mathbf{P}^{x}=\mathbf{Q}^{x}$.
The theorem is proved. \qed 

\subsection*{Acknowledgments}

The author would like to thank the anonymous referees for their valuable
comments and suggestions. The author is supported by the STU Scientific
Research Foundation for Talents (No. NTF18023) and NSFC (No. 11861029).

\bibliographystyle{amsplain}
\addcontentsline{toc}{section}{\refname}\bibliography{PJ_Libarary}

\def\cprime{$'$} \def\cprime{$'$}
\providecommand{\bysame}{\leavevmode\hbox to3em{\hrulefill}\thinspace}
\providecommand{\MR}{\relax\ifhmode\unskip\space\fi MR }
\providecommand{\MRhref}[2]{%
  \href{http://www.ams.org/mathscinet-getitem?mr=#1}{#2}
}
\providecommand{\href}[2]{#2}
\begin{thebibliography}{10}

\bibitem{MR2583323}
Helmut Abels and Moritz Kassmann, \emph{The {C}auchy problem and the martingale
  problem for integro-differential operators with non-smooth kernels}, Osaka J.
  Math. \textbf{46} (2009), no.~3, 661--683.

\bibitem{MR958291}
R.~F. Bass, \emph{Uniqueness in law for pure jump {M}arkov processes}, Probab.
  Theory Related Fields \textbf{79} (1988), no.~2, 271--287.

\bibitem{MR1483890}
Richard~F. Bass, \emph{Diffusions and elliptic operators}, Probability and its
  Applications (New York), Springer-Verlag, New York, 1998.

\bibitem{MR1964949}
Richard~F. Bass and Zhen-Qing Chen, \emph{Brownian motion with singular drift},
  Ann. Probab. \textbf{31} (2003), no.~2, 791--817.

\bibitem{MR2222382}
\bysame, \emph{Systems of equations driven by stable processes}, Probab. Theory
  Related Fields \textbf{134} (2006), no.~2, 175--214.

\bibitem{MR2095633}
Richard~F. Bass and Moritz Kassmann, \emph{Harnack inequalities for non-local
  operators of variable order}, Trans. Amer. Math. Soc. \textbf{357} (2005),
  no.~2, 837--850.

\bibitem{MR2180302}
\bysame, \emph{H\"older continuity of harmonic functions with respect to
  operators of variable order}, Comm. Partial Differential Equations
  \textbf{30} (2005), no.~7-9, 1249--1259.

\bibitem{MR2642351}
Richard~F. Bass and Edwin Perkins, \emph{A new technique for proving uniqueness
  for martingale problems}, Ast\'erisque (2009), no.~327, 47--53 (2010).

\bibitem{MR2508568}
Richard~F. Bass and Huili Tang, \emph{The martingale problem for a class of
  stable-like processes}, Stochastic Process. Appl. \textbf{119} (2009), no.~4,
  1144--1167.

\bibitem{Chaker19}
Jamil Chaker, \emph{The martingale problem for a class of nonlocal operators of
  diagonal type}, Mathematische Nachrichten \textbf{292} (2019), no.~11,
  2316--2337.

\bibitem{chen2019heat}
Xin Chen, Zhen-Qing Chen, and Jian Wang, \emph{Heat kernel for non-local
  operators with variable order}, Stochastic Process. Appl. (2019).

\bibitem{MR2008600}
Zhen-Qing Chen and Takashi Kumagai, \emph{Heat kernel estimates for stable-like
  processes on {$d$}-sets}, Stochastic Process. Appl. \textbf{108} (2003),
  no.~1, 27--62.

\bibitem{MR2357678}
\bysame, \emph{Heat kernel estimates for jump processes of mixed types on
  metric measure spaces}, Probab. Theory Related Fields \textbf{140} (2008),
  no.~1-2, 277--317.

\bibitem{chen2015heat}
Zhen-Qing Chen and Xicheng Zhang, \emph{Heat kernels and analyticity of
  non-symmetric jump diffusion semigroups}, Probab. Theory Related Fields
  \textbf{165} (2016), no.~1-2, 267--312.

\bibitem{chen2016uniqueness}
\bysame, \emph{Uniqueness of stable-like processes}, arXiv preprint
  arXiv:1604.02681 (2016).

\bibitem{ChenZhang2017}
\bysame, \emph{Heat kernels for non-symmetric non-local operators}, Recent
  developments in nonlocal theory, De Gruyter, Berlin, 2018, pp.~24--51.

\bibitem{MR3806688}
\bysame, \emph{Heat kernels for time-dependent non-symmetric stable-like
  operators}, J. Math. Anal. Appl. \textbf{465} (2018), no.~1, 1--21.

\bibitem{MR838085}
Stewart~N. Ethier and Thomas~G. Kurtz, \emph{Markov processes: Characterization
  and convergence}, Wiley Series in Probability and Mathematical Statistics:
  Probability and Mathematical Statistics, John Wiley \& Sons, Inc., New York,
  1986.

\bibitem{MR1378858}
Walter Hoh, \emph{Pseudodifferential operators with negative definite symbols
  and the martingale problem}, Stochastics Stochastics Rep. \textbf{55} (1995),
  no.~3-4, 225--252.

\bibitem{MR1809340}
\bysame, \emph{Pseudo differential operators with negative definite symbols of
  variable order}, Rev. Mat. Iberoamericana \textbf{16} (2000), no.~2,
  219--241.

\bibitem{MR3573301}
Lorick Huang and St\'ephane Menozzi, \emph{A parametrix approach for some
  degenerate stable driven {SDE}s}, Ann. Inst. Henri Poincar\'e Probab. Stat.
  \textbf{52} (2016), no.~4, 1925--1975.

\bibitem{MR1243995}
Niels Jacob and Hans-Gerd Leopold, \emph{Pseudo-differential operators with
  variable order of differentiation generating {F}eller semigroups}, Integral
  Equations Operator Theory \textbf{17} (1993), no.~4, 544--553.

\bibitem{jin2017heat}
Peng Jin, \emph{Heat kernel estimates for non-symmetric stable-like processes},
  arXiv preprint arXiv:1709.02836 (2017).

\bibitem{Kim2017}
Panki Kim, Renming Song, and Zoran Vondra\v{c}ek, \emph{Heat kernels of
  non-symmetric jump processes: beyond the stable case}, Potential Anal.
  \textbf{49} (2018), no.~1, 37--90.

\bibitem{MR3765882}
Victoria Knopova and Alexei Kulik, \emph{Parametrix construction of the
  transition probability density of the solution to an {SDE} driven by
  {$\alpha$}-stable noise}, Ann. Inst. Henri Poincar\'{e} Probab. Stat.
  \textbf{54} (2018), no.~1, 100--140.

\bibitem{MR1744782}
Vassili Kolokoltsov, \emph{Symmetric stable laws and stable-like
  jump-diffusions}, Proc. London Math. Soc. (3) \textbf{80} (2000), no.~3,
  725--768.

\bibitem{MR3912204}
Franziska K\"{u}hn, \emph{Transition probabilities of {L}\'{e}vy-type
  processes: parametrix construction}, Math. Nachr. \textbf{292} (2019), no.~2,
  358--376.

\bibitem{MR3877025}
Tadeusz Kulczycki and Micha\l Ryznar, \emph{Transition density estimates for
  diagonal systems of {SDE}s driven by cylindrical {$\alpha$}-stable
  processes}, ALEA Lat. Am. J. Probab. Math. Stat. \textbf{15} (2018), no.~2,
  1335--1375.

\bibitem{kulik2015weak}
Alexei~M. Kulik, \emph{On weak uniqueness and distributional properties of a
  solution to an {SDE} with {$\alpha$}-stable noise}, Stochastic Process. Appl.
  \textbf{129} (2019), no.~2, 473--506.

\bibitem{MR2802040}
St\'ephane Menozzi, \emph{Parametrix techniques and martingale problems for
  some degenerate {K}olmogorov equations}, Electron. Commun. Probab.
  \textbf{16} (2011), 234--250.

\bibitem{MR1246036}
R.~Mikulevi{\v{c}}ius and H.~Pragarauskas, \emph{On the {C}auchy problem for
  certain integro-differential operators in {S}obolev and {H}\"older spaces},
  Liet. Mat. Rink. \textbf{32} (1992), no.~2, 299--331.

\bibitem{MR3201992}
R.~Mikulevicius and H.~Pragarauskas, \emph{On the {C}auchy problem for
  integro-differential operators in {H}\"older classes and the uniqueness of
  the martingale problem}, Potential Anal. \textbf{40} (2014), no.~4, 539--563.

\bibitem{MR3145767}
R.~Mikulevi{\v{c}}ius and H.~Pragarauskas, \emph{On the {C}auchy problem for
  integro-differential operators in {S}obolev classes and the martingale
  problem}, J. Differential Equations \textbf{256} (2014), no.~4, 1581--1626.

\bibitem{MR1262797}
Akira Negoro, \emph{Stable-like processes: construction of the transition
  density and the behavior of sample paths near {$t=0$}}, Osaka J. Math.
  \textbf{31} (1994), no.~1, 189--214.

\bibitem{MR2244602}
Luis Silvestre, \emph{H\"older estimates for solutions of integro-differential
  equations like the fractional {L}aplace}, Indiana Univ. Math. J. \textbf{55}
  (2006), no.~3, 1155--1174.

\bibitem{MR0433614}
Daniel~W. Stroock, \emph{Diffusion processes associated with {L}\'evy
  generators}, Z. Wahrscheinlichkeitstheorie und Verw. Gebiete \textbf{32}
  (1975), no.~3, 209--244.

\bibitem{MR2190038}
Daniel~W. Stroock and S.~R.~Srinivasa Varadhan, \emph{Multidimensional
  diffusion processes}, Classics in Mathematics, Springer-Verlag, Berlin, 2006,
  Reprint of the 1997 edition.

\bibitem{tang2007uniqueness}
Huili Tang, \emph{Uniqueness for the martingale problem associated with pure
  jump processes of variable order}, arXiv preprint arXiv:0712.4137 (2007).

\bibitem{MR1274897}
Masaaki Tsuchiya, \emph{L\'evy measure with generalized polar decomposition and
  the associated {SDE} with jumps}, Stochastics Stochastics Rep. \textbf{38}
  (1992), no.~2, 95--117.

\bibitem{MR1880349}
Toshihiro Uemura, \emph{On some path properties of symmetric stable-like
  processes for one dimension}, Potential Anal. \textbf{16} (2002), no.~1,
  79--91.

\bibitem{MR2091550}
\bysame, \emph{On symmetric stable-like processes: some path properties and
  generators}, J. Theoret. Probab. \textbf{17} (2004), no.~3, 541--555.

\end{thebibliography}

\end{document}